\documentclass[11pt,reqno]{amsart} % use larger type; default would be 10pt

\usepackage[utf8]{inputenc} % set input encoding (not needed with XeLaTeX)

%%% PAGE DIMENSIONS
%\usepackage[letterpaper,hmarginratio=1:1]{geometry}
\usepackage{fullpage}
%%% PACKAGES
\usepackage{graphicx} % support the \includegraphics command and options
\usepackage{enumerate}
\usepackage{xcolor}
\usepackage{microtype}
\usepackage{amsmath}
\usepackage{amssymb}
\usepackage{amsthm}
\usepackage{amsmath} \usepackage{amssymb}
\usepackage{array} % for better arrays (eg matrices) in maths
\usepackage{comment}
\usepackage{bbm}
\usepackage[hidelinks]{hyperref}
\usepackage{cleveref}

\newcommand{\eqdist}{\stackrel{(d)}{=}}

\numberwithin{equation}{section}
\newtheorem{theorem}{Theorem}[section]
\newtheorem{proposition}[theorem]{Proposition}
\newtheorem{lemma}[theorem]{Lemma}

\theoremstyle{remark}

\theoremstyle{definition}

%%% END Article customizations

%%%%%%% Numbering %%%%
%\def\theequation{\thesection.\arabic{equation}}

%%%%Shortcuts
\def\ber{\color{red}}
\def\eer{\normalcolor}
\def\one{{\mathbbm 1}}
\newcommand{\eps}{\varepsilon}

\def\benr{\begin{enumerate}[label=(\roman*)]}
\def\eenr{\end{enumerate}}

\def\R{\mathbb{R}}

\def\P{\mathbb{P}}
\def\E{\mathbb{E}}

\def\G{\mathbb{G}}

\newcommand{\bma}{\begin{bmatrix}}
\newcommand{\ema}{\end{bmatrix}}

\setcounter{tocdepth}{2}

\renewcommand{\hat}{\widehat}
\renewcommand{\tilde}{\widetilde}

\DeclareMathOperator{\Bi}{Bi}
\newcommand{\unn}[2]{[\![#1,#2]\!]}
\renewcommand{\phi}{\varphi}
\newcommand{\mm}{m}

%%% The "real" document content comes below...

\title{Existence of the free energy for heavy-tailed spin glasses}
%\date{} % Activate to display a given date or no date (if empty),
         % otherwise the current date is printed

\author{Aukosh Jagannath and Patrick Lopatto}
\begin{document}
\begin{abstract}
We study the free energy of a mean-field spin glass whose coupling distribution has power law tails. Under the assumption that the couplings have infinite variance and finite mean, we show that the thermodynamic limit of the quenched free energy exists, and that the free energy is self-averaging.
\end{abstract}

\maketitle
%\setcounter{tocdepth}{1}
%\tableofcontents

\section{Introduction}

We study a mean-field spin glass with heavy-tailed (infinite variance) couplings. This model was introduced  by Cizeau and Bouchaud  30 years ago to understand surprising experimental results on dilute spin glasses with dipolar interactions \cite{cizeau1993mean}. It has been extensively studied by physicists \cite{janzen2010thermodynamics, engel2007replica,  boettcher2014extremal, boettcher2012ground, andresen2011critical,  janzen2010levy, janzen2008replica, janzen2010stability, neri2010phase}, who have also drawn connections to random matrix theory and finance \cite{cizeau1994theory, biroli2007extreme,galluccio1998rational}. 
However, to our knowledge nothing is known rigorously, as the proof techniques developed for lighter-tailed couplings do not apply. 
Indeed, even the existence of the thermodynamic limit of the quenched free energy has not been established. In this note, we prove this existence result and, in addition, prove that the free energy is self-averaging. 
%In this article, we study a spin glass model whose couplings are distributed as a random variable $J$ satisfying $\P\big( | J | > t\big) \sim t^{-\alpha}$ for $\alpha \in (1,2)$; such a variable has finite mean but infinite variance. We show that the limit of the quenched average of the free energy exists, and that the free energy concentrates about this average as the number of spins tends to infinity. 

Let us now be more precise. 
Fix $\alpha \in (0,2)$. Let $J$ be a symmetric random variable such that 
\begin{equation}\label{e:arep}
\P\big( |J| \ge t \big) = \frac{C_0}{t^\alpha}
\end{equation}
for all $|t|>1$, for some constant $C_0 >0$, and such that $\E\big[ | J | \big] < \infty$.
Let $\{J_{ij}\}_{1 \le i < j \le N}$ be a collection of independent, identically distributed random variables with the same distribution as $J$. We consider the Hamiltonian 
\begin{equation}\label{e:hamiltonian}
H(\sigma)  =  \frac{1}{N^{1/\alpha}} \sum_{1\le i < j \le N} J_{ij} \sigma_i \sigma_j,\qquad \sigma \in \Sigma_N = \{ - 1,  + 1\}^N.
\end{equation}
%where $\sigma \in \Sigma_N = \{ - 1,  + 1\}^N$. %We consider only the case $\alpha \in (1,2)$ in this note. 
The partition function and the quenched average of the free energy at inverse temperature $\beta > 0$ are given by
\begin{equation}\label{e:fn}
Z_N(\beta) = \sum_{\sigma\in \Sigma_N} e^{ \beta H(\sigma)}, \qquad F_N(\beta) = \frac{1}{N} \E \big[\log Z_N\big].
\end{equation}

Our first main result establishes that the limit of $F_N(\beta)$ as $N$ grows large exists when $\alpha >1$. (It is straightforward to see that $F_N(\beta)$ is infinite when $\alpha < 1$.)
\begin{theorem}\label{t:main}
For every $\alpha \in (1,2)$ and $\beta >0$, the limit 
$\lim_{N \rightarrow \infty} F_N(\beta)$
exists and is finite.
\end{theorem}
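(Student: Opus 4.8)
The plan is to truncate the couplings from both sides, thereby reducing the problem to the existence of the thermodynamic limit for a \emph{diluted} (sparse) spin glass, which is already understood. Fix parameters $0<\eps<M<\infty$, to be sent to $0$ and to $\infty$ at the end, and split each coupling according to the size of $|J_{ij}|$ relative to $\eps N^{1/\alpha}$ and $MN^{1/\alpha}$: with $I_{\mathrm b}=[0,\eps N^{1/\alpha}]$, $I_{\mathrm m}=(\eps N^{1/\alpha},MN^{1/\alpha}]$ and $I_{\mathrm t}=(MN^{1/\alpha},\infty)$, write $H=H^{\mathrm b}+H^{\mathrm m}+H^{\mathrm t}$ where
\begin{equation*}
H^{\sharp}(\sigma)=\frac{1}{N^{1/\alpha}}\sum_{1\le i<j\le N}J_{ij}\,\one_{|J_{ij}|\in I_{\sharp}}\,\sigma_i\sigma_j,\qquad \sharp\in\{\mathrm b,\mathrm m,\mathrm t\}.
\end{equation*}
Since $e^{\beta H(\sigma)}\le e^{\beta\sup_{\tau}|H^{\mathrm b}(\tau)|+\beta\sup_{\tau}|H^{\mathrm t}(\tau)|}\,e^{\beta H^{\mathrm m}(\sigma)}$ for every $\sigma$, and symmetrically with a lower bound, summing over $\sigma$, taking $\log$ and $\E$, and dividing by $N$ gives, with $F_N^{\mathrm m}(\beta)=\tfrac1N\E\log\sum_{\sigma}e^{\beta H^{\mathrm m}(\sigma)}$,
\begin{equation}\label{e:sketch-split}
\bigl|F_N(\beta)-F_N^{\mathrm m}(\beta)\bigr|\ \le\ \frac{\beta}{N}\E\sup_{\sigma}|H^{\mathrm b}(\sigma)|\ +\ \frac{\beta}{N}\E\sup_{\sigma}|H^{\mathrm t}(\sigma)|.
\end{equation}

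Next I would bound the two terms on the right of \eqref{e:sketch-split} uniformly in $N$. For the bulk term, each summand of $\sum_{i<j}J_{ij}\one_{|J_{ij}|\le\eps N^{1/\alpha}}\sigma_i\sigma_j$ is bounded in modulus by $\eps N^{1/\alpha}$ and has variance $\E[J^2\one_{|J|\le\eps N^{1/\alpha}}]\le C_\alpha\eps^{2-\alpha}N^{2/\alpha-1}$ --- this is where $\alpha<2$ enters. Bernstein's inequality for each fixed $\sigma\in\Sigma_N$, followed by a union bound over the $2^N$ configurations, yields $\E\sup_{\sigma}\bigl|\sum_{i<j}J_{ij}\one_{|J_{ij}|\le\eps N^{1/\alpha}}\sigma_i\sigma_j\bigr|\le C(\eps^{(2-\alpha)/2}+\eps)N^{1+1/\alpha}$, and hence
\begin{equation*}
\frac{\beta}{N}\E\sup_{\sigma}|H^{\mathrm b}(\sigma)|\ \le\ C\beta\,\eps^{(2-\alpha)/2}\ =:\ \beta\,\omega_1(\eps),\qquad \omega_1(\eps)\to0\ \text{as }\eps\to0.
\end{equation*}
It is crucial here that the supremum runs over $\Sigma_N$ rather than over the Euclidean sphere, so that no single large entry can inflate it; that is what keeps this bound free of logarithmic factors. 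For the tail term, the crude estimate $\sup_{\sigma}|H^{\mathrm t}(\sigma)|\le N^{-1/\alpha}\sum_{|J_{ij}|>MN^{1/\alpha}}|J_{ij}|$ together with $\E[|J|\one_{|J|>t}]=\tfrac{C_0\alpha}{\alpha-1}t^{1-\alpha}$ (for $t\ge1$) gives
\begin{equation*}
\frac{\beta}{N}\E\sup_{\sigma}|H^{\mathrm t}(\sigma)|\ \le\ C\beta\,M^{1-\alpha}\ =:\ \beta\,\omega_2(M),\qquad \omega_2(M)\to0\ \text{as }M\to\infty;
\end{equation*}
this is the one place where $\alpha>1$, i.e.\ $\E|J|<\infty$, is used essentially, and the breakdown of this step for $\alpha\le1$ is exactly why $F_N(\beta)$ is then infinite.

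Finally I would observe that $F_N^{\mathrm m}(\beta)$ is \emph{exactly} a diluted spin glass free energy. A pair $\{i,j\}$ satisfies $|J_{ij}|\in I_{\mathrm m}$ with probability $c_{\eps,M}/N$, where $c_{\eps,M}=C_0(\eps^{-\alpha}-M^{-\alpha})$; and by the exact Pareto identity $\P(|J|>s\mid|J|>r)=(s/r)^{-\alpha}$ (valid for $r\ge1$), conditionally on $|J_{ij}|\in I_{\mathrm m}$ the rescaled weight $J_{ij}/N^{1/\alpha}$ has a distribution $\nu_{\eps,M}$ that does not depend on $N$ --- a uniformly random sign times a $\mathrm{Pareto}(\alpha)$ variable conditioned to $(\eps,M]$, in particular bounded by $M$. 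Thus $F_N^{\mathrm m}(\beta)$ is precisely the free energy of the $2$-body diluted spin glass on the Erd\H{o}s--R\'enyi graph $G(N,c_{\eps,M}/N)$ at inverse temperature $\beta$ with bounded i.i.d.\ edge weights of law $\nu_{\eps,M}$, so by the interpolation method for diluted models (Franz--Leone; Bayati--Gamarnik--Tetali; Panchenko) the limit $g(\eps,M):=\lim_{N\to\infty}F_N^{\mathrm m}(\beta)$ exists and is finite (indeed $\log2\le g(\eps,M)\le\log2+\tfrac{\beta M}{2}c_{\eps,M}$). Combining this with \eqref{e:sketch-split} and the two error bounds gives $\limsup_N F_N(\beta)-\liminf_N F_N(\beta)\le 2\beta(\omega_1(\eps)+\omega_2(M))$ for all $0<\eps<M<\infty$; sending $\eps\to0$ and $M\to\infty$ forces the right-hand side to $0$, so $\lim_N F_N(\beta)$ exists, and it is finite because it lies within $2\beta(\omega_1(\eps)+\omega_2(M))$ of $g(\eps,M)$. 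The step I expect to be the main obstacle is this last identification --- checking that the ``body'' model genuinely meets the hypotheses of the diluted-model existence theorems, with the exact Pareto identity doing the essential work of removing the $N$-dependence of the edge weights (and, for a self-contained argument, re-running the Franz--Leone interpolation for this bounded-weight, Erd\H{o}s--R\'enyi setting). The Bernstein-and-union-bound estimate producing $\omega_1$ with the correct power of $\eps$ is the other computation requiring some care.
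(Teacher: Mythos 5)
Your proposal is correct in outline and reaches the result by a genuinely different route, though it shares the paper's underlying philosophy (reduce to a sparse model, then interpolate). The paper truncates only from below, at the level $R=N^{1/\alpha-\eps}$: the discarded couplings then have total variance $o(N)$ after normalization, so a soft Jensen argument (\Cref{l:l2}, via \Cref{lem:jensen}) shows the truncated free energy has the \emph{same} limit as $F_N$ for each fixed small $\eps$; the price is that the resulting sparse model has diverging connectivity $\sim N^{\alpha\eps}$ and unbounded, $N$-dependent weights, so the Bayati--Gamarnik--Tetali interpolation must be redone from scratch in \Cref{s:sparse,s:lemma4}. You instead truncate on both sides at fixed multiples $\eps N^{1/\alpha}$ and $MN^{1/\alpha}$ of the natural scale, which produces a bona fide Viana--Bray model with constant connectivity $c_{\eps,M}$ and bounded, $N$-independent, symmetric weights $\nu_{\eps,M}$ (the exact Pareto form of the tail in \eqref{e:arep} is what makes $\nu_{\eps,M}$ free of $N$, as you note); the price is a double limit and error terms $\omega_1(\eps)+\omega_2(M)$ that are only small as $\eps\to0$, $M\to\infty$ rather than as $N\to\infty$. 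Your error estimates check out: the Bernstein-plus-union-bound computation for the bulk does give $C(\eps^{(2-\alpha)/2}+\eps)$ (and could even be replaced by the paper's softer Jensen bound, which yields $O(\eps^{2-\alpha})$ with no concentration argument at all), and the first-moment bound for the tail gives $CM^{1-\alpha}$ using $\E|J|<\infty$ exactly as you say. The step you flag as the main obstacle is real but surmountable: existence of the thermodynamic limit for the Viana--Bray model with bounded symmetric i.i.d.\ couplings is precisely what the Franz--Leone/Guerra--Toninelli and Bayati--Gamarnik--Tetali interpolations provide (symmetry of $\nu_{\eps,M}$ is genuinely needed, just as the symmetry of $d_r$ is invoked in the Conclusion step of \Cref{l:interpolate}), together with the standard bookkeeping between binomial edge counts, multigraphs, and simple graphs that the paper carries out in \Cref{s:fixededge,s:multiedge}. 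In short, your argument outsources the interpolation to the literature at the cost of an extra approximation layer, while the paper internalizes the interpolation and in exchange works with a single sparse model whose free energy converges to $\lim_{N}F_N$ exactly.
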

Our second main result establishes that the free energy concentrates around its quenched average (after taking $t = N^{-\delta/2}$).
%a concentration result for the free energy.  In particular, taking $t = N^{-\delta/2}$ in this theorem, we see that the free energy concentrates around its quenched average. 
\begin{theorem}\label{t:concentration}
For every $\alpha \in (1,2)$, $\beta >0$, and $\delta > 0$, there exists a constant $C(\alpha, \beta, \delta, C_0)>1$ such that
\begin{equation}\label{e:concentration}
\P\left( N^{-1} \Big| \log Z_N(\beta)
- \E\big[ \log Z_N(\beta)  \big] \Big| > t   \right) \le \frac{C N^{1 - \alpha + \delta}}{t^2}.
\end{equation}
\end{theorem}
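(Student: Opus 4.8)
The plan is to circumvent the infinite variance of the couplings by truncation: prove concentration for a truncated model via a second-moment (Efron--Stein) bound, and show that truncating changes nothing of significance. Fix a level $M=M_N$ --- the optimal choice turns out to be $M=N^{1+1/\alpha}$ --- and set $\hat{J}_{ij}:=J_{ij}\one_{|J_{ij}|\le M}$, $\hat{H}(\sigma):=N^{-1/\alpha}\sum_{i<j}\hat{J}_{ij}\sigma_i\sigma_j$, and $\hat{Z}_N(\beta):=\sum_{\sigma\in\Sigma_N}e^{\beta\hat{H}(\sigma)}$. (The two exponent constraints enter here: $\alpha>1$ makes $\E|J|$ and the tail $\E[|J|\one_{|J|>M}]$ finite, $\alpha<2$ makes the truncated second moment $\E[\hat{J}^2]$ finite.) Splitting on the event $B:=\{\max_{i<j}|J_{ij}|>M\}$, a union bound with \eqref{e:arep} gives $\P(B)\le\binom N2 C_0 M^{-\alpha}$, which is $O(N^{1-\alpha})$ for $M=N^{1+1/\alpha}$, while on $B^c$ one has $Z_N=\hat{Z}_N$ identically, so
\begin{equation*}
\P\!\left(N^{-1}\big|\log Z_N-\E\log Z_N\big|>t\right)\le\P(B)+\P\!\left(N^{-1}\big|\log\hat{Z}_N-\E\log Z_N\big|>t\right).
\end{equation*}

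Next I would replace $\E\log Z_N$ by $\E\log\hat{Z}_N$ at negligible cost. Since $|H(\sigma)-\hat{H}(\sigma)|\le N^{-1/\alpha}\sum_{i<j}|J_{ij}|\one_{|J_{ij}|>M}$ uniformly in $\sigma$, bounding each summand of $Z_N$ against the corresponding summand of $\hat{Z}_N$ gives $|\log Z_N-\log\hat{Z}_N|\le\beta N^{-1/\alpha}\sum_{i<j}|J_{ij}|\one_{|J_{ij}|>M}$; taking expectations and using $\E[|J|\one_{|J|>M}]=\tfrac{C_0\alpha}{\alpha-1}M^{1-\alpha}$ gives $N^{-1}\big|\E\log Z_N-\E\log\hat{Z}_N\big|\le C\beta N^{1-\alpha}$. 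We may therefore assume $t\ge 2C\beta N^{1-\alpha}$ --- otherwise the right-hand side of \eqref{e:concentration} exceeds $1$ for $N$ large (as $\alpha>1$ and $\delta>0$), and there is nothing to prove --- so the shift is at most $t/2$ and it remains to bound $\P(N^{-1}|\log\hat{Z}_N-\E\log\hat{Z}_N|>t/2)$.

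For the truncated model I would apply the Efron--Stein inequality, viewing $\log\hat{Z}_N$ as a function of the independent couplings $(J_{ij})_{i<j}$. Resampling one coordinate $J_{ij}$ changes $\hat{H}$ by $N^{-1/\alpha}(\hat{J}_{ij}-\hat{J}'_{ij})\sigma_i\sigma_j$, hence changes $\log\hat{Z}_N$ by at most $\beta N^{-1/\alpha}|\hat{J}_{ij}-\hat{J}'_{ij}|$ in absolute value, so
\begin{equation*}
\var(\log\hat{Z}_N)\le\tfrac12\sum_{i<j}\E\big[(\log\hat{Z}_N-\log\hat{Z}_N^{(ij)})^2\big]\le\binom N2\,\frac{\beta^2}{N^{2/\alpha}}\,\E[\hat{J}^2].
\end{equation*}
Since $\alpha<2$, $\E[\hat{J}^2]\le 1+\tfrac{C_0\alpha}{2-\alpha}M^{2-\alpha}=O(M^{2-\alpha})$, so $\var(\log\hat{Z}_N)\le C\beta^2 N^{2-2/\alpha}M^{2-\alpha}$ and Chebyshev gives $\P(N^{-1}|\log\hat{Z}_N-\E\log\hat{Z}_N|>t/2)\le 4\var(\log\hat{Z}_N)/(N^2t^2)\le C\beta^2 N^{-2/\alpha}M^{2-\alpha}/t^2$. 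With $M=N^{1+1/\alpha}$ the exponent $-2/\alpha+(1+1/\alpha)(2-\alpha)$ collapses to $1-\alpha$, so this term is $O(N^{1-\alpha}/t^2)$, and together with $\P(B)=O(N^{1-\alpha})$ this yields \eqref{e:concentration} in the range of interest (certainly for $t\lesssim N^{\delta/2}$, which covers the application $t=N^{-\delta/2}$; the $N^{\delta}$ of slack in the statement frees one from optimizing $M$ exactly and absorbs the $t$-independent term $\P(B)$).

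The main obstacle is the infinite variance itself, and the crux of its resolution is the choice of $M$: lowering $M$ raises $\P(B)$ and the truncation-induced free-energy shift, while raising $M$ raises $\E[\hat{J}^2]\asymp M^{2-\alpha}$ and hence $\var(\log\hat{Z}_N)$, so one must check that all three error terms become $o(1)$ --- here $O(N^{1-\alpha})$ --- after dividing by $N$. That a single choice $M=N^{1+1/\alpha}$ drives all three to exactly this order is forced by the $N^{1/\alpha}$ normalization of $H$, and verifying this balance is the heart of the argument. A related point, easy to overlook, is that the concentration step must be variance-sensitive: a bounded-difference (Azuma) estimate using only $|\hat{J}_{ij}|\le M$ is hopelessly lossy here because $M=N^{1+1/\alpha}\gg N^{1/\alpha}$, so Efron--Stein --- or an equivalent martingale second-moment bound --- is what makes the approach work.
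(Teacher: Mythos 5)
Your argument is correct on the range of $t$ it covers, and it takes a genuinely different route from the paper. The paper works with the untruncated couplings: it decomposes $N^{-1}(\log Z_N-\E[\log Z_N])$ into martingale differences $D_x$ indexed by edges, bounds $|D_x|\le N^{-1-1/\alpha}(|J_x|+\E|J_x|)$, and then---precisely because $\E[J^2]=\infty$---passes to a $p$-th moment with $p<\alpha$ via Burkholder's inequality and the subadditivity $(\sum_x D_x^2)^{p/2}\le\sum_x|D_x|^p$, finishing with Markov. Your truncation at $M=N^{1+1/\alpha}$ combined with Efron--Stein reaches the same exponent $N^{1-\alpha}$ by a more elementary computation; your balancing of the three error terms checks out, and your remark that a bounded-difference estimate would be hopelessly lossy is exactly the right diagnosis (it is the same reason the paper cannot simply square its martingale differences). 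The price of your route is the $t$-independent additive term $\P(B)\asymp N^{1-\alpha}$, which is why your bound holds only for $t\lesssim N^{\delta/2}$.

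That restriction is the one discrepancy with the statement, which is implicitly quantified over all $t>0$. You should not try to remove it, because the unrestricted claim is false: averaging $H$ over the $2^{N-1}$ configurations with $\sigma_1\sigma_2=\sgn(J_{12})$ shows $\max_\sigma H(\sigma)\ge N^{-1/\alpha}|J_{12}|$, hence $\log Z_N\ge\beta N^{-1/\alpha}|J_{12}|$, so for fixed $N$ and large $t$ the left side of \eqref{e:concentration} is at least $\P\big(|J_{12}|>cN^{1+1/\alpha}t\big)\asymp t^{-\alpha}$, which dominates $Ct^{-2}$ since $\alpha<2$. The same observation shows $\var(\log Z_N)=+\infty$, so the second-moment bound of \Cref{p:martingale}, from which the paper deduces the theorem, cannot hold as stated; the slip there is that Burkholder controls $\E\big[|A_{|\mathcal I|}|^p\big]$, not $\E\big[|A_{|\mathcal I|}|^2\big]$, by $\E\big[(\sum_x D_x^2)^{p/2}\big]$, and the martingale argument really yields a $p$-th moment bound for $p<\alpha$ and hence tail decay $t^{-p}$. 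In short, your proof is sound, it covers the regime $t=N^{-\delta/2}$ for which the theorem is used, and the restriction you flag is intrinsic to the statement rather than a defect of your method.
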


We now comment on the ideas of the proofs, beginning with \Cref{t:main}. It is well known that given a collection of $N$ independent variables with a distribution of the form \eqref{e:arep}, there will be approximately a constant number of them of order $N^{1/\alpha}$, with the rest lower order in $N$ (with high probability). This explains the normalization in \eqref{e:hamiltonian}: after normalizing by $N^{-1/\alpha}$, for each spin $\sigma_i$ there will be a constant number of constant order couplings, with the others much smaller. This configuration of couplings resembles a sparse weighted graph, after neglecting the small couplings.  With this heuristic in mind, we begin by showing that the limit of the quenched free energy for the heavy-tailed spin glass is equal to that of a sparse spin glass model. We then apply a technique from the literature on such models, the combinatorial interpolation of Bayati, Gamarnik, and Tetali \cite{bayati2010combinatorial}, to show that the limit of the quenched free energy exists for this sparse model, which completes the proof of \Cref{t:main}. The proof of \Cref{t:concentration} is through a short martingale  argument, which establishes a bound on the second moment of the difference considered in \eqref{e:concentration}. Then \Cref{t:concentration} follows from an application of Markov's inequality.

%\subsection*{Conventions}
%We will frequently define constants that depend on a parameter. These will be introduced as $C(x)$ for a parameter $x$, and subsequently referred to as $C$ (suppressing the dependence on the parameter in the notation). 
%These constants may change line to line. We typically write $C>1$ for large constants and $c>0$ for small constants. 
%Most constants will depend on $\alpha$, but we suppress this in the notation. Similarly, we suppress all dependencies on $C_0$ and $u_0$.
%
%We write $O\big( f(N) \big)$ for a function $f: \mathbb{Z}_{>0} \rightarrow \mathbb{R}_{> 0}$ to denote a quantity $Q(N)$ such that $Q(N) \le C f(N)$ for some constant $C>1$ and all $N \in \mathbb{Z}_{>0}$. Similarly, we write $o\big( f(N) \big)$ to denote a quantity $q(N)$ such that $\lim_{N\rightarrow \infty} q(N)/f(N)  = 0$.
%
%For the rest of this article, we fix $\beta = 1$ and omit it from the notation, since the value of $\beta$ does not affect our arguments.
%Finally, the function $\log$ always denotes the natural logarithm.

\subsection*{Previous work} 
The existence of the thermodynamic limit of the quenched free energy for a mean field spin glass with Gaussian spins (the Sherrington--Kirkpatrick model) was established by Guerra and Toninelli \cite{guerra2002thermodynamic}. Carmona and Hu showed that the limit exists for identically distributed couplings with finite third moment \cite{carmona2006universality}. Chatterjee then showed that finite variance suffices (which addresses the case of $\alpha > 2$ in \eqref{e:arep}) \cite{chatterjee2005simple}. 
Additionally, Starr and Vermesi  proved a formula for the difference of the free energies of two mean-field spin glass models with infinitely divisible coupling distributions in terms of expectations of multi-spin overlaps \cite{starr2008some}. This class of distributions includes $\alpha$-stable laws, which have infinite variance for $\alpha <2$. At present, it seems difficult to usefully apply this formula in the heavy-tailed context, since we lack information about the overlaps of such models.

\subsection*{Outline} In \Cref{s:reduction}, we reduce the proof of \Cref{t:main} to the proof of an analogous theorem for a certain sparse spin glass model. In \Cref{s:sparse}, we prove \Cref{t:main} assuming an interpolation result, given as \Cref{l:main}. In \Cref{s:lemma4}, we prove \Cref{l:main}, completing the argument for \Cref{t:main}. \Cref{s:concentration} contains the proof of \Cref{t:concentration}.

\subsection*{Acknowledgments} 
The authors thank A. Aggarwal, A. Auffinger, and A. Fribergh for helpful discussions. 
P.L.\ is partially supported by NSF postdoctoral
fellowship DMS-220289. A.J.\ acknowledges the support of the Natural Sciences and Engineering Research Council of Canada (NSERC). Cette recherche a \'et\'e financ\'ee par le Conseil de recherches en sciences naturelles et en g\'enie du Canada (CRSNG),  [RGPIN-2020-04597, DGECR-2020-00199]. 

\subsection*{Notation} For brevity,  we  take $\beta=1$ throughout, though the arguments are identical for $\beta>0$. Further, we let $C,c>0$ denote  constants that may change from line to line, and may depend on $C_0$.
The notation $\unn{a}{b}$ denotes the set of integers $k$ such that $a\le k \le b$.

\section{Reduction to Sparse Hamiltonian}\label{s:reduction}
In this section, we reduce the problem of establishing the limit of the quenched average of the free energy for the Hamiltonian $H$ to the analogous one for a certain sparse Hamiltonian $H^\circ_{u,v,\mm}$ (defined below in \Cref{s:multiedge}), for particular choices of parameters $u,v,\mm$. This reduction proceeds in a series of steps. In \Cref{s:perturbation}, we begin by showing that this problem for $H$ is equivalent to one for a sparse Hamiltonian $\widehat H$, which omits all couplings smaller than a certain threshold. In \Cref{s:fixededge}, we further reduce the problem to studying another sparse Hamiltonian $\widetilde H$, and in \Cref{s:multiedge}, we complete the reduction to $H^\circ_{u,v,\mm}$.

\subsection{The truncated model}\label{s:perturbation}
We begin by viewing $H$ as a perturbation of the Hamiltonian for a model that omits couplings smaller than a certain threshold, and showing 
that the perturbation term provides a negligible contribution to the free energy. 
%when computing the quenched average of the free energy. The argument we apply here is somewhat 
%
%
%
%The argument we use for this purpose is similar to one used in the proof of the Ghirlanda--Guerra identities for mixed $p$-spin models \cite[Section 3.2]{panchenko2013sherrington}.%, which proceeds by introducing a perturbed Hamiltonian \cite{panchenko2013sherrington}. 
To this end, let $\eps>0$ be a parameter that will be chosen later and $R= N^{1/\alpha-\eps}$. We may then write $H(\sigma) = \widehat H(\sigma)  + \hat p(\sigma)$,
where 
\begin{equation}\label{e:perturb}
\widehat H(\sigma) = \frac{1}{N^{1/\alpha}} \sum_{1\le i < j\le N } J_{ij} \one_{| J_{ij} | \ge  R }\sigma_i \sigma_j,\qquad  \hat p(\sigma) = \frac{1}{N^{1/\alpha}} \sum_{1\le i < j \le N} J_{ij}  \one_{| J_{ij} | <  R }\sigma_i \sigma_j.
\end{equation} 
%where $\eps >0$ is parameter that will be chosen later. %We think of $\widehat H(\sigma)$ as the main part of the Hamiltonian and consider $\hat p(\sigma)$ as a perturbation term. 
Denote the partition function and free energy for $\widehat H(\sigma)$ by
$\hat Z_N = \sum_{\sigma\in \Sigma_N} e^{  \hat H(\sigma)}$ and $\hat F_N = \frac{1}{N} \E [\log \hat Z_N]$ respectively.
Our goal is to prove the following:
\begin{lemma}\label{l:l2}
There exists $c> 0$ such that for all $\eps \in (0,c)$, we have that
$\lim_{N\rightarrow \infty} \widehat F_N = \lim_{N\rightarrow \infty}  F_N,$
if the limit on the left exists.
\end{lemma}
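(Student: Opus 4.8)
The plan is to prove the quantitative statement $|F_N-\widehat F_N|\to 0$, from which \Cref{l:l2} is immediate: if $\widehat F_N\to L$ then $F_N=\widehat F_N+(F_N-\widehat F_N)\to L$ as well (and one in fact gets the converse implication for free). First I would set up the basic comparison. Writing $Z_N=\sum_{\sigma\in\Sigma_N}e^{\widehat H(\sigma)}e^{\hat p(\sigma)}$ and pulling out the worst-case size of the perturbation, one obtains, with $P_N:=\max_{\sigma\in\Sigma_N}|\hat p(\sigma)|$, the deterministic sandwich $e^{-P_N}\widehat Z_N\le Z_N\le e^{P_N}\widehat Z_N$, hence $|\log Z_N-\log\widehat Z_N|\le P_N$ and so $|F_N-\widehat F_N|\le \tfrac1N\E[P_N]$. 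Thus everything reduces to showing $\E[P_N]=o(N)$. The main obstacle is that $\hat p$ is \emph{not} uniformly small and so cannot simply be absorbed: the triangle inequality only gives $P_N\le N^{-1/\alpha}\sum_{i<j}|J_{ij}|\one_{|J_{ij}|<R}$, whose expectation is of order $N^{2-1/\alpha}$, far bigger than $N$. The resolution is to exploit the cancellation present in $\hat p(\sigma)$ for each \emph{fixed} $\sigma$, using that the truncation scale $R=N^{1/\alpha-\eps}$ lies strictly below the natural scale $N^{1/\alpha}$ of the dominant couplings.

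Next I would establish a high-probability bound on $P_N$ by concentration plus a union bound over the $2^N$ spin configurations. For fixed $\sigma$, the quantity $\hat p(\sigma)=N^{-1/\alpha}\sum_{i<j}J_{ij}\one_{|J_{ij}|<R}\sigma_i\sigma_j$ is a sum of $\binom N2$ independent, mean-zero random variables, each bounded in absolute value by $RN^{-1/\alpha}=N^{-\eps}$. The key computation is the variance: the tail assumption \eqref{e:arep} gives $\E[J^2\one_{|J|<R}]\le C R^{2-\alpha}$ (this is where $\alpha<2$ is used), whence $V:=\var(\hat p(\sigma))\le C N^{2-2/\alpha}R^{2-\alpha}=CN^{1-\eps(2-\alpha)}$. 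Bernstein's inequality then yields $\P(|\hat p(\sigma)|\ge s)\le 2\exp\big(-\tfrac12 s^2/(V+\tfrac13 N^{-\eps}s)\big)$, and a short check — valid for every $\eps\in(0,1/\alpha)$, which is what pins down the constant $c$ in the statement — shows that taking $s$ of order $s_0:=N^{1-\eps(2-\alpha)/2}$ makes the Gaussian term $V$ dominate the Bernstein denominator, so that $\P(|\hat p(\sigma)|\ge C s_0)\le 2e^{-2N}$. A union bound over $\Sigma_N$ then gives $\P(P_N\ge C s_0)\le 2^{N+1}e^{-2N}$, which decays exponentially; note $s_0=o(N)$.

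Finally I would assemble the expectation via $\E[P_N]=\int_0^\infty \P(P_N>s)\,ds$. For $s\le Cs_0$ bound the integrand trivially by $1$, contributing $Cs_0=o(N)$; for $s>Cs_0$ use the union bound together with the Bernstein tail, and the crude deterministic bound $P_N\le N^2$ to cap the integration range, to see that the remaining contribution is exponentially small. Hence $\E[P_N]=o(N)$, so $|F_N-\widehat F_N|\le\tfrac1N\E[P_N]\to0$, which proves \Cref{l:l2}. (One could equally split $\E[P_N]$ over the high-probability event of the previous paragraph and its complement, estimating the latter by Cauchy--Schwarz with the crude bound $\E[P_N^2]\le CN^{4-2/\alpha}<\infty$, finite because the couplings in $\hat p$ are truncated at $R$ and $\E|J|<\infty$.) I expect the technical heart of the argument — and the one place where care is genuinely needed — to be precisely the variance estimate and the accompanying Bernstein check: the single parameter $R$ must simultaneously keep the per-term size $N^{-\eps}$ small and the variance $N^{1-\eps(2-\alpha)}$ well below $N^2$, so that the Gaussian regime of Bernstein produces a bound on $P_N$ of order $N^{1-\eps(2-\alpha)/2}=o(N)$ rather than merely $O(N)$.
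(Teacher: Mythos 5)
Your proof is correct, but it takes a genuinely different route from the paper's. The paper decouples $\widehat H$ and $\hat p$ via a resampling of the couplings (so that, conditionally on which couplings exceed the threshold, $\widehat H$ and $\hat p$ are independent) and then applies Jensen's inequality twice (\Cref{lem:jensen}): the lower bound $\widehat F_N \le F_N$ comes from $\E\langle \hat p\rangle = 0$, and the upper bound comes from the annealed estimate $\log \E\langle e^{\hat p}\rangle \le CN^{2-2/\alpha}R^{2-\alpha} = O(N^{1+\eps(\alpha-2)})$, which rests on exactly the same truncated second-moment computation $\E\big[J^2\one_{|J|<R}\big]\le CR^{2-\alpha}$ that drives your variance bound. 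You instead control the perturbation uniformly over configurations: the deterministic sandwich $|\log Z_N-\log\widehat Z_N|\le P_N:=\max_{\sigma}|\hat p(\sigma)|$, followed by Bernstein's inequality plus a union bound over the $2^N$ elements of $\Sigma_N$, with the crude deterministic cap $P_N\le \tfrac12 N^{2-\eps}$ used to dispose of the tail of $\E[P_N]$. Your exponent checks are right: with $R=N^{1/\alpha-\eps}$ and $\eps<1/\alpha$ the per-term bound is $N^{-\eps}$, the total variance is $V\le CN^{1-\eps(2-\alpha)}$, and at the scale $s_0=N^{1-\eps(2-\alpha)/2}$ the Bernstein denominator is indeed dominated by $V$ since $N^{-\eps}s_0/V=O(N^{-\alpha\eps/2})\to0$, so the union bound closes. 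What each approach buys: yours is self-contained (no resampling or conditional-independence bookkeeping) and yields the stronger conclusion that the perturbation is uniformly $o(N)$ over all spin configurations with exponentially high probability, whereas the paper's Jensen argument is shorter and gives the slightly better rate $N^{\eps(\alpha-2)}$ versus your $N^{\eps(\alpha-2)/2}$ for $|F_N-\widehat F_N|$; both rates tend to zero, so either suffices for \Cref{l:l2}.
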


The proof of this result will follow by applying the following elementary fact whose proof 
follows by repeatedly applying Jensen's inequality.
\begin{lemma}\label{lem:jensen}
Suppose that $x(\sigma)$ and $y(\sigma)$ are random processes on some finite set $\Sigma$, 
and let  $\langle\cdot\rangle$ denote expectation with respect to the Gibbs measure $\pi(\{\sigma\})\propto \exp(x(\sigma))$.
Then %applying Jensen's inequality twice yields
\begin{equation}\label{eq:jensen}
\E \log \sum_{\sigma \in \Sigma} e^{x(\sigma)} + \E \langle y(\sigma) \rangle \leq \E \log \sum_{\sigma \in \Sigma} e^{x(\sigma)+y(\sigma)} \leq \E \log \sum_{\sigma \in \Sigma} e^{x(\sigma)} + \log\E \langle e^{y(\sigma)}\rangle,
\end{equation}
provided all the expectations are finite.%\footnote{Finiteness is not necessary for this result. We state it this way for simplicity.}
\end{lemma}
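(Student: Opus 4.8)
The plan is to reduce both inequalities to a single algebraic identity followed by one elementary application of Jensen's inequality each. The first step is to note that, by the very definition of the Gibbs measure $\pi(\{\sigma\})\propto e^{x(\sigma)}$, one has $\langle e^{y(\sigma)}\rangle = \big(\sum_{\sigma}e^{x(\sigma)+y(\sigma)}\big)\big/\big(\sum_{\sigma}e^{x(\sigma)}\big)$, i.e.
\begin{equation*}
\log \sum_{\sigma \in \Sigma} e^{x(\sigma)+y(\sigma)} = \log \sum_{\sigma \in \Sigma} e^{x(\sigma)} + \log \big\langle e^{y(\sigma)} \big\rangle .
\end{equation*}
Taking the outer expectation $\E$, the claim becomes the two-sided bound $\E\langle y(\sigma)\rangle \le \E\log\langle e^{y(\sigma)}\rangle \le \log\E\langle e^{y(\sigma)}\rangle$, which I would establish as follows.

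For the right-hand (upper) bound, apply Jensen's inequality for the concave function $\log$ with respect to the outer expectation $\E$, applied to the nonnegative random variable $\langle e^{y(\sigma)}\rangle$: this gives $\E\log\langle e^{y(\sigma)}\rangle \le \log \E\langle e^{y(\sigma)}\rangle$ directly. For the left-hand (lower) bound, apply Jensen's inequality for the convex function $\exp$ with respect to the Gibbs average $\langle\cdot\rangle$, which yields $\langle e^{y(\sigma)}\rangle \ge e^{\langle y(\sigma)\rangle}$, hence $\log\langle e^{y(\sigma)}\rangle \ge \langle y(\sigma)\rangle$ pointwise in the disorder; taking $\E$ then gives $\E\log\langle e^{y(\sigma)}\rangle \ge \E\langle y(\sigma)\rangle$. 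Substituting these two estimates into the identity from the first step completes the proof. (This is the sense in which the result ``follows by repeatedly applying Jensen'': two opposite-direction Jensen steps, one on $\E$ with $\log$ and one on $\langle\cdot\rangle$ with $\exp$.)

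There is essentially no genuine obstacle here; the only care needed is bookkeeping about well-definedness and integrability. Since $\Sigma$ is finite, all the $\sigma$-sums converge and $\langle e^{y(\sigma)}\rangle$ is a well-defined positive random variable, so the only substantive requirement is the hypothesis that the expectations $\E\langle y(\sigma)\rangle$, $\E\log\sum_\sigma e^{x(\sigma)}$, $\E\log\sum_\sigma e^{x(\sigma)+y(\sigma)}$, and $\E\langle e^{y(\sigma)}\rangle$ are finite, which both makes the Jensen steps legitimate and ensures no $\infty-\infty$ arises when adding and subtracting $\E\log\sum_\sigma e^{x(\sigma)}$. When this lemma is later invoked (with $x=\widehat H$ and $y=\hat p$, or with $y=-\hat p$), these finiteness conditions will need to be checked in that context, but for the lemma itself they are simply assumed.
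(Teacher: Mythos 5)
Your proof is correct and is exactly the argument the paper intends: the paper gives no details beyond "repeatedly applying Jensen's inequality," and your two applications (convexity of $\exp$ under the Gibbs average for the lower bound, concavity of $\log$ under $\E$ for the upper bound), combined with the identity $\log\sum_\sigma e^{x+y}=\log\sum_\sigma e^{x}+\log\langle e^{y}\rangle$, supply precisely those details.
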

\noindent We will apply this result with $x=\hat H$ and $y = \hat p$.

In the following it will be helpful to notice that 
%Note that $\widehat H(\sigma)$ is a sparse model: Heuristically, the probability that a coupling is nonzero in $\widehat H(\sigma)$  is $\P( |J| > R) \approx 1/R^\alpha = N^{\alpha\epsilon -1}$, and since there are $O(N^2)$ possible couplings, a typical realization will have in expectation $O(N^{1+\alpha\epsilon})$ nonzero couplings. We will treat this claim more precisely below.
while $\widehat H(\sigma)$ and $\hat p(\sigma)$ are nominally dependent due to the common coefficients $J_{ij}$, 
we can introduce independence through the following two-step resampling procedure for the $J_{ij}$. 
Let $\{L_{ij}\}_{1 \le i < j \le N}$ be mutually independent random variables such that 
\begin{equation}\label{d:p}
\P( L_{ij} = 1 ) = p,\quad \P( L_{ij} = 0 ) = 1 - p,\quad p  = p_N =  \P( |J| \ge R).
\end{equation}
%%\begin{comment}
%%\begin{definition}\label{d:p}
%%We define a \emph{label} to be an array $ L = \{L_{ij}\}_{1 \le i < j \le N}$ such that each element is equal to $0$ or $1$. Given a collection of random variables $ = \{J_{ij}\}_{1 \le i < j  \le N}$, the corresponding label is defined by $L_{ij} =  \one_{| J_{ij} | \ge  R }$.
%%We say that a label $ L$ is \emph{$ J$-distributed} if the entries $L_{ij}$ are mutually independent, $\P( L_{ij} = 1 ) = p$, and $\P( L_{ij} = 0 ) = 1 - p$, where
%%\begin{equation}\label{d:p}
%%p  = p_N =  \P( |J| \ge R).
%%\end{equation}% for an $\alpha$-stable law $Z$.
%%\end{definition}
%%\end{comment}
Let $\{a_{ij}, b_{ij} \}_{1\le i < j  \le N}$ be a collection of mutually independent random variables (which are also independent from the $L_{ij}$ variables) such that for every interval $I \subset \R$, we have 
\begin{equation}\label{e:arep2}
\begin{aligned}
\P( a_{ij}  \in I ) &= (1-p)^{-1} \P \big( J \in  I \cap \left( -R, R \right) \big)\\
\P( b_{ij} \in I ) &= p^{-1} \P \Big( J \in  I \cap \big( (-\infty, -R ] \cup  [R, \infty )\big) \Big).
\end{aligned}
\end{equation}
%and 
%\begin{equation}\label{e:bdef}
%\end{equation}
Then we have the distributional equalities \begin{equation}\label{e:resampling}
\begin{aligned}
\{J_{ij}\}_{1 \le i < j  \le N} &\eqdist \{ (1- L_{ij} )a_{ij} + L_{ij}b_{ij} \}_{1 \le i < j  \le N}\\
\widehat H(\sigma) \eqdist \frac{1}{N^{1/\alpha}} \sum_{1\le i < j \le N} L_{ij}b_{ij}\sigma_i \sigma_j,
 &\qquad \hat p(\sigma) \eqdist \frac{1}{N^{1/\alpha}} \sum_{1\le i < j \le N} (1- L_{ij} )a_{ij}\sigma_i \sigma_j,
  \end{aligned}
\end{equation}
with the dependence between $\hat H(\sigma)$ and $\hat p(\sigma)$  expressed through the $L_{ij}$.
Observe that after conditioning on $ L$, the sums $\widehat H(\sigma)$ and $\hat p(\sigma)$ are independent. 

Be before proving \Cref{l:l2}, we note the following useful moment bound.
Under the assumption $|x| \le 1$, we have $e^x \le 1 + x + x^2$. %. Using this in conjunction with $1 + x \le e^x$, 
 %\le x + e^{x^2}$. 
%Because $|e^x-1-x|\leq x^2$ for $|x|\leq 1$  
%and $1+x\leq e^x$, there is a universal $C>0$ such that 
Then for any random variable $X$ such that $\E[X] =0$ and $|X|\leq 1$, 
we have 
\begin{equation}\label{eq:one-to-two}
\E \big[\exp  (X) \big]\leq 1 + \E[X] + \E[X^2] \le \exp \big( \E [X^2]\big),
\end{equation}
where we used used $1+ x \le e^x$ in the second inequality.
%With this in hand, we have the 
\begin{lemma}\label{l:expmom}
There exists $C(\eps) >1$ such that
\[
\E \big[ \exp\big( N^{-1/\alpha} (1 - L_{ij}) a_{ij} \big) \big]\le  \exp( C R^{2-\alpha} N^{-2/\alpha} )
\]
for all $1\le i < j \le N$.
\end{lemma}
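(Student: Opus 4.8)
The plan is to apply the elementary bound \eqref{eq:one-to-two} to the random variable $X = N^{-1/\alpha}(1 - L_{ij}) a_{ij}$, after checking that its hypotheses are (nearly) met. First I would observe that $|X| \le N^{-1/\alpha} R = N^{-\eps} \le 1$ for $N$ large, since $|a_{ij}| < R$ by construction in \eqref{e:arep2} and $R = N^{1/\alpha - \eps}$; so the deterministic bound $|X| \le 1$ holds. However, $X$ is not quite centered: $\E[a_{ij}]$ need not be zero even though $J$ is symmetric, because conditioning on $|J| < R$ preserves symmetry, and hence $\E[a_{ij}] = 0$ — so in fact $\E[X] = 0$ after all. (If one is worried about the $1-L_{ij}$ factor, note it is independent of $a_{ij}$ and $\E[a_{ij}]=0$ makes the product mean zero regardless.) Thus \eqref{eq:one-to-two} applies directly and gives
\[
\E\big[\exp(X)\big] \le \exp\big(\E[X^2]\big) = \exp\big(N^{-2/\alpha}\,\P(1-L_{ij}=1)\,\E[a_{ij}^2]\big) \le \exp\big(N^{-2/\alpha}\,\E[a_{ij}^2]\big).
\]

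The remaining work is to bound the truncated second moment $\E[a_{ij}^2]$. By \eqref{e:arep2}, $(1-p)\E[a_{ij}^2] = \E\big[J^2 \one_{|J| < R}\big]$, and since $p = p_N = \P(|J| \ge R) = C_0 R^{-\alpha} \to 0$, it suffices to bound $\E[J^2 \one_{|J|<R}]$ by $C R^{2-\alpha}$. This is the standard computation for a power-law tail with $\alpha < 2$: writing $\E[J^2\one_{|J|<R}] = \int_0^{R^2}\P(J^2\one_{|J|<R} > s)\,ds$ and using $\P(|J| \ge t) = C_0 t^{-\alpha}$ for $t > 1$ (and the trivial bound $\P(|J|\ge t)\le 1$ for $t\le 1$), one gets
\[
\E\big[J^2 \one_{|J| < R}\big] \le 1 + \int_1^{R^2} \P\big(|J| \ge \sqrt{s}\big)\,ds = 1 + C_0\int_1^{R^2} s^{-\alpha/2}\,ds \le C R^{2-\alpha},
\]
where the last step uses $2 - \alpha > 0$ so the integral is $\asymp R^{2-\alpha}$ (and this dominates the additive constant for $R$ large). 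Combining, $\E[a_{ij}^2] \le (1-p)^{-1} C R^{2-\alpha} \le C R^{2-\alpha}$ for $N$ large, which yields $\E[\exp(X)] \le \exp(C R^{2-\alpha} N^{-2/\alpha})$ as claimed, with the constant $C$ absorbing the $(\eps$-dependent) threshold at which $N^{-\eps} \le 1$.

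There is no serious obstacle here; the only point requiring a little care is the bookkeeping around $\E[X] = 0$, i.e., confirming that the truncation in \eqref{e:arep2} keeps $a_{ij}$ symmetric (it does, since $J$ is symmetric and $(-R,R)$ is a symmetric interval), so that \eqref{eq:one-to-two} is legitimately applicable rather than needing a centered version. The other mild subtlety is ensuring the constant $C(\eps)$ is stated as depending on $\eps$: this dependence enters only through requiring $N$ large enough that $R = N^{1/\alpha-\eps} \ge 1$ and $N^{-\eps} \le 1$, and one can always inflate $C$ to cover the finitely many small $N$. I would present the argument in the order above: (1) verify $|X|\le 1$ and $\E[X]=0$; (2) invoke \eqref{eq:one-to-two}; (3) bound the truncated second moment via the tail integral; (4) assemble.
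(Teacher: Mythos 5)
Your proof is correct and follows essentially the same route as the paper: apply \eqref{eq:one-to-two} to $X=N^{-1/\alpha}(1-L_{ij})a_{ij}$ (the symmetry check ensuring $\E[X]=0$ is a worthwhile detail the paper leaves implicit) and then bound the truncated second moment $\E[J^2\one_{|J|<R}]$ by $CR^{2-\alpha}$ via the tail integral, exactly as in \eqref{e:apple}--\eqref{e:apple2}.
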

\begin{proof}
Since $\left| N^{-1/\alpha} (1 - L_{ij}) a_{ij}  \right| \le 1$ by definition, \eqref{eq:one-to-two} yields
%By \eqref{eq:one-to-two}
%By Taylor expansion, there exists a constant $C > 1$ such that $|e^x - 1 -x | < C x^2$ for $|x| \le 1$. By the definitions of $L_{ij}$, $R$, and $a_{ij}$, we have that $\left| N^{-1/\alpha} (1 - L_{ij}) a_{ij}  \right| \le 1$, so
\begin{align}
\E \left[ \exp\left( N^{-1/\alpha} (1 - L_{ij}) a_{ij} \right) \right]
%&\le 
%1 + C \cdot \E \left[  N^{-2/\alpha} (1 - L_{ij})^2 a^2_{ij}  \right] \\
\le  \exp\left( N^{-2/\alpha}   \E \big[  (1 - L_{ij})^2 a^2_{ij}  \big] \right).\label{e:apple}
\end{align}
%where the linear term in the first inequality vanishes due to the symmetry of $a_{ij}$, and we used the bound $1+ x \le e^x$ in the last inequality.
Using \eqref{e:arep2}, $|L_{ij}| \le 1$,  and $p_N = o(1)$ (from \eqref{e:arep}), we have
 \begin{equation}\label{e:apple2}
\E \big[  (1 - L_{ij})^2 a^2_{ij}  \big]
\le 
2\cdot\E\big[ J^2_{ij} \one_{ |J_{ij}| < R} \big]\le C \left(1 + \int_1^R t^{1 - \alpha} \, dt\right)\le C R^{2 -\alpha},
\end{equation}
where the first inequality holds for sufficiently large $N$ (depending on $\alpha$, $\eps$, and $C_0$), and we used \eqref{e:arep} in the second inequality. Inserting \eqref{e:apple2} into \eqref{e:apple} completes the proof.
\end{proof}

%To prove our reduction, let us first note the following elementary fact whose proof follows
%by applying Jensen's inequality repeatedly.

We now turn to our reduction.

\begin{proof}[Proof of Lemma~\ref{l:l2}]
Using the representation \eqref{e:resampling}, we see that if $\langle\cdot\rangle$ denotes
expectation with respect to $\widehat H$, then integrating first in the variables $a_{ij}$ yields
$\E \langle \hat p\rangle =\E \langle\E_a p\rangle=0.$
Thus by Lemma~\ref{lem:jensen}, we have that
\[
\hat{F}_N   \leq F_N \leq \hat F_N +  \log \E \langle \E_a e^{\hat p}\rangle.
\]
For a fixed $\sigma \in \Sigma_N$, the  expectation $\E_a [e^{\hat p(\sigma)}]$ does not depend on the values of the spins $\sigma_i$ because the $a_{ij}$ are symmetric. We then have
\begin{equation}\label{e:imp2}
 \E_a [e^{\hat p(\sigma)}] = \E_a\left[ \prod e^{ N^{-1/\alpha} (1 - L_{ij}) a_{ij}}\right]
 \le \Big( \exp\big( CR^{2-\alpha} N^{-2/\alpha} \big) \Big)^{N^2} = \exp\big( CN^{2 - 2/ \alpha } R^{2-\alpha} \big),
\end{equation} 
where we used the independence of the $(1 - L_{ij}) a_{ij}$ variables and \Cref{l:expmom} for the inequality.
By \eqref{e:arep} and our choice $R = N^{1/\alpha - \epsilon}$, we have  $N^{2 - 2/ \alpha } R^{2-\alpha} = O(N^{1+ \epsilon(\alpha -2 )})$. Using this bound in \eqref{e:imp2} completes the proof.
\end{proof}

\subsection{Model with fixed edge number}\label{s:fixededge}

We next consider a sparse Hamiltonian where the total number of couplings is fixed. Before defining this model, we note the following preliminary lemma. 

Let $M = M_N = \sum_{1\le i <j \le N} L_{ij}$, 
which represents the number of nonzero couplings in $\widehat H(\sigma)$. %We recall that we defined $p = \P( |J| \ge R)$ in \Cref{d:p}. 
Recalling that  $p = C_0 N^{-1 + \alpha\eps}$ by \eqref{d:p} and applying the multiplicative Chernoff inequality yields the following
concentration bound on $M_N$.
\begin{lemma}\label{l:concentrate}
For every $\eps>0$, there exists a constant $c( \eps) >0$ such that 
\begin{equation}\label{e:concentrate}
\left| \E [ M ]  - \frac{C_0}{2} N^{1 + \alpha \eps}  \right| \le c^{-1} N^{\alpha\eps},%\qquad  c N^{1 + \alpha\eps} \le \var\left[  M\right]\le c^{-1} N^{1 + \alpha\eps}
\qquad \P\left( \big| M - \E[ M ]  \big| > c^{-1} N^{1/2 +\alpha\eps/2 + \eps} \right) \le 2 e^{ -  cN^\eps}.
\end{equation}
%%and
%%\begin{equation}\label{e:concentrate}
%%\P\left( \big| M - \E[ M ]  \big| > c^{-1} N^{1/2 +\alpha\eps/2 + \eps} \right) \le 2 \exp\left( -  cN^\eps \right).
%%\end{equation}
\end{lemma}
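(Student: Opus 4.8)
The plan is to exploit that $M = M_N = \sum_{1 \le i < j \le N} L_{ij}$ is a sum of $\binom{N}{2}$ i.i.d.\ Bernoulli variables of parameter $p = p_N$, and to handle the mean estimate and the deviation estimate separately. For the mean, I would first note that once $N$ is large enough that $R = N^{1/\alpha - \eps} > 1$, the tail law \eqref{e:arep} gives $p_N = C_0 R^{-\alpha} = C_0 N^{-1 + \alpha\eps}$ exactly, so that $\E[M] = \binom{N}{2} p_N = \tfrac{C_0}{2} N^{1+\alpha\eps} - \tfrac{C_0}{2} N^{\alpha\eps}$. The first inequality in \eqref{e:concentrate} is then immediate with any $c \le 2/C_0$.

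For the concentration, I would apply the multiplicative Chernoff inequality: for a sum $M$ of independent $\{0,1\}$-valued variables with mean $\mu$, one has $\P(|M - \mu| \ge \delta \mu) \le 2 \exp(-\delta^2 \mu / 3)$ for every $\delta \in (0,1)$. I would use this with $\delta = \delta_N$ chosen so that $\delta_N \mu = c^{-1} N^{1/2 + \alpha\eps/2 + \eps}$; since $\mu = \Theta(N^{1+\alpha\eps})$ by the previous paragraph, this forces $\delta_N = \Theta(N^{-1/2 - \alpha\eps/2 + \eps})$, which indeed lies in $(0,1)$ for all large $N$ provided $\eps$ is chosen small enough (explicitly $\eps < \tfrac{1}{2-\alpha}$, which costs nothing since $\alpha < 2$ and $\eps$ is free to shrink to satisfy the earlier reductions). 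Then $\delta_N^2 \mu = (\delta_N \mu)^2 / \mu = \Theta(c^{-2} N^{2\eps})$, so after shrinking $c = c(\eps) > 0$ if necessary one gets $\delta_N^2 \mu / 3 \ge c N^{2\eps} \ge c N^\eps$, which gives the second inequality in \eqref{e:concentrate}.

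I do not expect a genuine obstacle here: the argument is the textbook Chernoff computation, and the only real work is bookkeeping of exponents — namely verifying that the target deviation scale $N^{1/2 + \alpha\eps/2 + \eps}$ stays in the regime $\delta_N \le 1$ where the multiplicative Chernoff bound applies (this is what produces the harmless constraint on $\eps$), and checking that the resulting rate $N^{2\eps}$ beats the stated $N^\eps$. I would also make explicit that the exact expression for $p_N$ requires $R > 1$, hence $N$ sufficiently large in terms of $\alpha$ and $\eps$, which is consistent with the purely asymptotic content of the lemma.
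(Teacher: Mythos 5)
Your proof is correct and takes exactly the approach the paper intends: the paper offers no separate proof of this lemma, stating only that $p = C_0 N^{-1+\alpha\eps}$ (from \eqref{d:p} and \eqref{e:arep}) together with the multiplicative Chernoff inequality yields \eqref{e:concentrate}, which is precisely the computation you carry out. Your exponent bookkeeping is right: $\E[M] = \binom{N}{2}p_N = \tfrac{C_0}{2}(N^{1+\alpha\eps}-N^{\alpha\eps})$ gives the first bound, and choosing $\delta_N\mu = c^{-1}N^{1/2+\alpha\eps/2+\eps}$ yields $\delta_N^2\mu = \Theta(N^{2\eps})$, which beats $N^{\eps}$.
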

%\begin{proof}
%By \Cref{d:p}, we have $p = C_0 N^{-1 + \alpha\eps}$, from which the first claim follows. The second claim follows from an application of the 
%multiplicative form of the Chernoff bound.
%\end{proof}

%Set $M = Np$, which represents the mean number of nonzero couplings in the $\widehat H(\sigma)$ Hamiltonian. It is straightforward to check that $M = O(N^{1 + \alpha \eps})$. 

We now introduce a new Hamiltonian $\widetilde H$. Set 
\begin{equation}\label{definitionS}
S_N = \left\lfloor  \frac{C_0}{2} N^{1 + \alpha \eps} \right\rfloor.
\end{equation}
 Let $\{\widetilde L_{ij}\}_{1\le i, j \le N}$ denote the adjacency matrix of a graph drawn uniformly at random from graphs with precisely $S_N$ edges on $N$ vertices, so that each $\widetilde L_{ij}$ is $0$ or $1$. 
% 
%  be 
% $\{0,1\}$-valued random variables whose
%joint distribution is given by sampling uniformly at random from the set of all graphs with precisely $S_N$ edges on $N$ vertices labeled $1$ to $N$, then setting $\widetilde L_{ij} = 1$ if there is an edge between the vertices $i$ and $j$, and $0$ otherwise. 
By definition, exactly $S_N$ of the $\widetilde L_{ij}$ are nonzero.
Set
\begin{equation}\label{e:resampling2}
\{\widetilde J_{ij}\}_{i<j}= \{  \widetilde L_{ij}b_{ij} \}_{ i < j },
\end{equation}
where we recall that the variables $\{b_{ij}\}_{i<j}$ were defined in \eqref{e:arep2}, and we require that the variables $\{\widetilde L_{ij}, a_{ij}, b_{ij}\}_{i<j}$ are all mutually independent. Then define the Hamiltonian
\[
\tilde H(\sigma) =  \frac{1}{N^{1/\alpha}} \sum_{1\le i < j \le N} \tilde J_{ij} \sigma_i \sigma_j,
\]
and set $\tilde Z_N = \sum_{\sigma\in \Sigma_N} e^{  \tilde H(\sigma)}$  and $ \tilde F_N = \frac{1}{N} \E \big[\log \tilde Z_N\big].$
We now show that  $\widetilde F$  is a good approximation to  $\widehat F$.

%\begin{lemma}\label{l:fixededge}
%There exists $c(\alpha)> 0$ such that, if $c(\alpha) > \eps$, then the following holds. If the free energy $\widetilde F_N$ for $\widetilde H(\sigma)$ converges to a  limit, then the free energy $\widehat F_N$ for $\widehat H(\sigma)$ converges to the same limit.
%\end{lemma}
\begin{lemma}\label{l:fixededge}
There exists $c> 0$ such that the following holds for all $\eps \in (0,c)$. We have 
$\lim_{N\rightarrow \infty} \widetilde F_N = \lim_{N\rightarrow \infty}  \widehat F_N,$
if the limit on the left exists.
\end{lemma}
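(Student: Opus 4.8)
The plan is to couple $\widehat H$ and $\widetilde H$ on a single probability space so that the two Hamiltonians differ in only $|M-S_N|$ coupling terms, where $M=M_N=\sum_{1\le i<j\le N}L_{ij}$ is the random number of nonzero couplings of $\widehat H$, and then to show this difference contributes $o(N)$ to $\log Z$. The key point is that, conditionally on $\{M=m\}$, the support graph $\{(i,j):L_{ij}=1\}$ of $\widehat H$ is uniform among $m$-edge graphs on $N$ vertices, just as $\{(i,j):\widetilde L_{ij}=1\}$ is uniform among $S_N$-edge graphs. Concretely, I would take a uniformly random ordering $\pi(1),\dots,\pi(\binom N2)$ of the $\binom N2$ pairs $\{i,j\}$, an independent $\mathrm{Binomial}\bigl(\binom N2,p\bigr)$ variable $M$, and the family $\{b_{ij}\}$ of \eqref{e:arep2} independent of both; writing $\sigma_{\{i,j\}}:=\sigma_i\sigma_j$ and $b_{\{i,j\}}:=b_{ij}$, one checks the distributional identities
\[
\widehat H(\sigma)\;\eqdist\;\frac{1}{N^{1/\alpha}}\sum_{k=1}^{M}b_{\pi(k)}\,\sigma_{\pi(k)},\qquad
\widetilde H(\sigma)\;\eqdist\;\frac{1}{N^{1/\alpha}}\sum_{k=1}^{S_N}b_{\pi(k)}\,\sigma_{\pi(k)},
\]
valid jointly as processes in $\sigma$, with $M$ independent of $(\pi,\{b_{ij}\})$. (For $\widehat H$ this uses \eqref{e:resampling} together with the fact that a $\mathrm{Binomial}\bigl(\binom N2,p\bigr)$-long prefix of a uniform edge-ordering gives a graph in which each edge is present independently with probability $p$; for $\widetilde H$ it is immediate from \eqref{e:resampling2}.) In this coupling the two support graphs differ exactly in the edges $\pi(k)$, $\min(M,S_N)<k\le\max(M,S_N)$.

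Next I would run a telescoping estimate. Appending a single term $\tfrac{1}{N^{1/\alpha}}b_e\sigma_e$ to any Hamiltonian replaces its partition function $Z$ by a $Z'$ with $e^{-|b_e|/N^{1/\alpha}}Z\le Z'\le e^{|b_e|/N^{1/\alpha}}Z$, since $|b_e\sigma_e|\le|b_e|$ pointwise in $\sigma$; hence $|\log Z'-\log Z|\le|b_e|/N^{1/\alpha}$. Adding or removing the $|M-S_N|$ coupling terms one at a time therefore gives
\[
\bigl|\log\widehat Z_N-\log\widetilde Z_N\bigr|\;\le\;\frac{1}{N^{1/\alpha}}\sum_{k=\min(M,S_N)+1}^{\max(M,S_N)}\bigl|b_{\pi(k)}\bigr|.
\]
Taking expectations and conditioning on $M$ — so that, by the independence of $(\pi,\{b_{ij}\})$ from $M$, each of the $|M-S_N|$ summands is an independent copy of $|b|$ — this yields
\[
\bigl|\widehat F_N-\widetilde F_N\bigr|\;\le\;\frac{1}{N^{1+1/\alpha}}\,\E\bigl|M-S_N\bigr|\cdot\E|b|.
\]

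It then remains to bound the two factors. A layer-cake computation with \eqref{e:arep} gives $\E\bigl[|J|\,\one_{|J|\ge R}\bigr]=\tfrac{\alpha C_0}{\alpha-1}R^{1-\alpha}$ (this is the only place $\alpha>1$ is used), and since $p=C_0R^{-\alpha}$ and $R=N^{1/\alpha-\eps}$, \eqref{e:arep2} gives $\E|b|=\tfrac{\alpha}{\alpha-1}R\le CN^{1/\alpha-\eps}$. For the other factor, the triangle inequality and Jensen give $\E|M-S_N|\le\sqrt{\var M}+|\E M-S_N|$, where $\var M\le\E M\le\tfrac{C_0}{2}N^{1+\alpha\eps}$ and $|\E M-S_N|=O(N^{\alpha\eps})$ by \Cref{l:concentrate} and \eqref{definitionS}; hence $\E|M-S_N|\le CN^{1/2+\alpha\eps/2}$ for $\eps$ small. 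Combining the three displays,
\[
\bigl|\widehat F_N-\widetilde F_N\bigr|\;\le\;C\,N^{-1/2+\alpha\eps/2-\eps}\;=\;C\,N^{-1/2-(1-\alpha/2)\eps}\;\xrightarrow[N\to\infty]{}\;0,
\]
since $\alpha<2$; in particular $\widehat F_N$ and $\widetilde F_N$ share the same limit whenever $\lim_N\widetilde F_N$ exists. (Finiteness of $\E|\log\widehat Z_N|$ and $\E|\log\widetilde Z_N|$, needed for the expectations above, follows from $-\max_\sigma|H|\le\log Z\le N\log 2+\max_\sigma|H|$ and $\E\max_\sigma|H|<\infty$, which holds since $\E|J|<\infty$.)

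The main obstacle — indeed essentially the only delicate step — is justifying the coupling: one must check carefully that conditioning on $M$ leaves the support graph of $\widehat H$ uniform over graphs with that number of edges (so both models can be generated from one random edge-ordering), and that the edge-weights may be taken i.i.d.\ and independent of $M$. Everything after that is bookkeeping, resting on the single quantitative input that, for $\alpha>1$, $\E|b|$ is of order $R\ll N^{1/\alpha}$, so the $O(N^{1/2+\alpha\eps/2})$ edges on which $\widehat H$ and $\widetilde H$ disagree perturb $\log Z_N$ by only $o(N)$.
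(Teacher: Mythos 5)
Your proof is correct, and its core mechanism is the same as the paper's: couple $\widehat H$ and $\widetilde H$ so that they differ in roughly $|M-S_N|\approx N^{1/2+\alpha\eps/2}$ couplings, each of which perturbs $\log Z$ by at most $N^{-1/\alpha}|b|$ with $\E|b|=O(N^{1/\alpha-\eps})$, so that the total discrepancy is $o(N)$. The execution, however, is genuinely different and noticeably leaner. The paper couples the two support graphs by adding or deleting a uniformly random set of $|\ell-S_N|$ edges, controls $|\ell-S_N|$ via a Chernoff estimate (\Cref{l:concentrate}), and is then forced to split into a good event $\mathcal A$ and its complement, handling $\mathcal A^c$ with H\"older's inequality and the crude moment bound $\E\big[|\log\widehat Z|^{1+\eps}\big]\le CN^5$. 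Your coupling --- a uniform edge-ordering $\pi$ together with an independent $\Bi\big(\binom{N}{2},p\big)$ prefix length $M$ and weights independent of both --- makes the set of differing edges exactly $\{\pi(k):\min(M,S_N)<k\le\max(M,S_N)\}$ with $M$ independent of the weights, so the telescoped bound can be integrated unconditionally to give $|\widehat F_N-\widetilde F_N|\le N^{-1-1/\alpha}\,\E|M-S_N|\,\E|b|$, and the elementary estimate $\E|M-S_N|\le\sqrt{\var M}+|\E M-S_N|$ suffices. This removes the event-splitting, the H\"older step, and the moment bounds on $\log Z$ entirely. The one step you rightly flag as delicate --- that the i.i.d.\ Bernoulli support graph conditioned on its edge count is uniform over graphs with that many edges, so both models can be realized from a single ordering --- is standard and correct, as is your computation $\E|b|=\tfrac{\alpha}{\alpha-1}R$, which is where $\alpha>1$ enters. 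Both arguments ultimately rest on the same two quantitative inputs ($\E|b|=O(R)$ and edge-count fluctuations of order $N^{1/2+\alpha\eps/2}$), so your route is a legitimate simplification rather than a new idea, but it is a real streamlining of the paper's proof.
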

\begin{proof}
Given a realization of the $\{ \tilde J_{ij} \}$, we consider the effect on $\widetilde Z$ of fixing some $(i,j)$ and changing $\tilde L_{ij}$ to $1 - \tilde L_{ij}$ (that is, adding or deleting a coupling). If we denote the partition function that results from this change by $\widetilde Z_{\mathrm{new}}$, we see by direct calculation that 
\begin{equation}\label{e:edgeremoval}
| \log \widetilde Z  - \log \widetilde Z_{\mathrm{new}} | \le N^{-1/\alpha} |b_{ij}|.
\end{equation}
Using \eqref{e:arep2}, we find $\E\big[ | b_{ij} | \big]  = O( N^{1/\alpha - \eps} )$, so \eqref{e:edgeremoval} yields
\begin{equation}\label{e:add}
\big| \E [\log \widetilde Z] -  \E [\log \widetilde Z_{\mathrm{new}} ]\big| \le C N^{-\eps}.
\end{equation}
for some constant $C>0$. Therefore, adding or removing $k$ couplings in this way results in a change of at most  $C k$ to the expected log-partition function. 

Let $\mathcal A$ be the event on which  $\left| M - S  \right| \le c^{-1} N^{1/2 +\alpha\eps/2 + \eps}$ holds. We write 
\begin{align}\label{e:c1}
| \hat F_N - \tilde F_N | \le 
\frac{1}{N} \Big| \E \big[ (\log \hat Z - \log \tilde Z) \one_{\mathcal A} \big] \Big|
+ \frac{1}{N} \Big| \E \big[  (\log \hat Z - \log \tilde Z) \one_{\mathcal A^c}  \big] \Big|.
\end{align}

We now consider an alternative sampling scheme for $\hat H$. Begin by sampling the mutually independent variables $\{ L_{ij}, \tilde L_{ij}, a_{ij}, b_{ij} \}$, and let $\ell$ be the (random) number of nonzero $L_{ij}$. From these realizations, we obtain a realization of $\tilde H$; we will use this realization of $\tilde H$ to produce a coupled realization of $\hat H$ in the following way. 
If $\ell > S$, choose uniformly at random $\ell - S$ distinct index pairs $\mathcal P = \big\{ (i_a,j_a) \big\}_{a=1}^{\ell -S}$ 
from the set $\big\{(i,j) : \tilde L_{ij} =0\big\}$. 
%such that $\tilde L_{i_a j_a} =0$ for each $a$. 
For $1 \le i < j \le N$, we set $\hat L_{ij} = \tilde L_{ij}$ if $(i,j) \notin \mathcal P$, and $\hat L_{ij} = 1 - \tilde L_{ij}$ if $(i,j) \in \mathcal P$. Likewise, if $\ell < S$, we choose uniformly at random $S - \ell$ index pairs $\mathcal P = \big\{ (i_a,j_a) \big\}_{a=1}^{\ell -S}$  
from the set $\big\{(i,j) : \tilde L_{ij} =1\big\}$;
%such that $\tilde L_{i_a j_a} = 1$ for each $a$, 
we set $\hat L_{ij} = \tilde L_{ij}$ if $(i,j) \notin \mathcal P$, and $\hat L_{ij} = 1 - \tilde L_{ij}$ if $(i,j) \in \mathcal P$. Finally, if $\ell = S$, we set $\hat L_{ij} = \tilde L_{ij}$ for all $(i,j)$ such that $1 \le i < j \le N$. 

Since this procedure is symmetric with the respect to the edges $(i,j)$, and $\ell$ equals the number of nonzero $\hat L_{ij}$ labels, we find that 
\begin{equation*}
\hat H(\sigma)  \eqdist \frac{1}{N^{1/\alpha}} \sum_{i < j} \hat L_{ij} b_{ij}  \sigma_i \sigma_j,
\end{equation*}
and we have produced a coupling between $\hat H(\sigma)$ and $\tilde H(\sigma)$ through the addition or subtraction of $|\ell - S|$ couplings from $\tilde H(\sigma)$.

On the event $\mathcal A$, we have $|\ell - S| \le c^{-1} N^{1/2 +\alpha\eps/2 + \eps}$ by definition. Therefore we obtain from  \eqref{e:add} that 
\begin{equation}\label{e:c2}
 \Big| \E\big [ (\log \hat Z - \log \tilde Z\big) \one_{\mathcal A} \big] \Big|
\le C N^{1/2 +\alpha\eps/2 + \eps}.
\end{equation}
For the other term in \eqref{e:c1}, we use H\"older's inequality to show that
\begin{equation*}
 \Big| \E \big[  (\log \hat Z - \log \tilde Z) \one_{\mathcal A^c}  \big] \Big|
 \le \P(\mathcal A^c)^{\frac{\eps}{1 + \eps}}
 \E \big[  |\log \hat Z \big|^{1+\eps}   \big]^{\frac{1}{1+\eps}} +
 \P(\mathcal A^c)^{\frac{\eps}{1 + \eps}} \E \big[  |\log \tilde Z |^{1+\eps}   \big]^{\frac{1}{1+\eps}}.
\end{equation*}
We give details only for the bound on $\E \big[  |\log \hat Z |^{1 +\eps}   \big]$, as the bound for $ \E \big[  |\log \tilde Z |^{1 +\eps}  \big]$ is similar. 

By definition, we have $| \hat H (\sigma) | \le N^{ - 1/\alpha} \sum_{1\le i<j \le N} |b_{ij}|,$
which implies
\begin{equation*}
2^N \exp\left( - N^{ - 1/\alpha} \textstyle\sum_{i<j} |b_{ij}|  \right) \le \widehat Z_N  \le 2^N \exp\big( N^{ - 1/\alpha}  \textstyle\sum_{i<j} |b_{ij}|  \big).
\end{equation*} 
This in turn implies that
$\big|\log \hat Z \big| \le N \log 2 + N^{ - 1/\alpha} \sum_{i<j} |b_{ij}|.$
Therefore
\begin{align}
\E \big[  |\log \hat Z \big|^{1 +\eps}   \big]
\le \E \big[ ( N \log 2 +  \textstyle\sum |b_{ij}| )^{1 +\eps} \big]\le C N^{2\eps} \E\big[( N^{1+\eps} +  \textstyle\sum |b_{ij}|^{1+\eps} ) \big]\le C N^{5},\label{conc2}
\end{align}
for some constant $C(\eps)>1$. 
In the second inequality, we used the elementary inequality
$\left(\sum_{i=1}^k x_i \right)^{1+\eps}
\le
k^\eps  \sum_{i=1}^k |x_i|^{1+\eps} ,$
which follows from H\"older's inequality.
In the third inequality, we used that the $(1+\eps)$-th moment of $|b_{ij}|$ exists for $\eps$ small enough (depending only on $\alpha > 1$) and is less than $CN^2$ for some $C( \eps)>1$. We conclude using \eqref{e:concentrate} and \eqref{conc2} that there exists $c( \eps) > 0$ such that
\begin{equation}\label{e:c3}
\P(\mathcal A^c)^{\frac{\eps}{1 + \eps}}
 \E \big[  |\log \hat Z |^{1+\eps}   \big]^{\frac{1}{1+\eps}} \le c^{-1} N^5 \exp( -  cN^\eps ).
\end{equation}
The conclusion now follows from combining \eqref{e:c1}, \eqref{e:c2}, and \eqref{e:c3}.
\end{proof}

\subsection{Model with multi-edges}\label{s:multiedge}

The Hamiltonian $\tilde H$ may be thought of as arising from a collection of $S_N$ weighted edges on a simple random graph. However, it will be  convenient to consider instead a similar model where the edges are sampled with replacement from pairs of vertices $\big\{(i,j)\big\}_{1 \le i \le j \le N}$. In particular, self-edges of the form $(i,i)$ are allowed, as well as multi-edges, meaning that an edge $(i,j)$ may appear two or more times.

We define the Hamiltonian $H^\circ_{u,v,\mm}(\sigma)$ as follows. The parameters $u,v,\mm$ denote the edge number, vertex number, and effective normalization, respectively, as we now explain. % to vary, respectively. This flexibility will be useful in the next section.
Let $\{ c_a \}_{1 \le a \le u }$ be a collection of identically distributed, independent random variables, where each $c_a$ is an ordered pair $(i,j)$ chosen uniformly at random from the set $\big\{ ( i,j) \big\}_{1 \le i \le j \le v}$. We write $c_a(1)$ and $c_a(2)$ for the first and second coordinates of $c_a$, respectively. While the definition of the $c_a$ variables depends on both $u$ and $v$, we omit this from the notation.

We set $q = \P\big( | J | \ge \mm^{1/\alpha -\eps} \big)$ and define identically distributed, independent random variables $\{ d_{a}\}_{1 \le a \le u} =\big\{ d_{a}(\mm)\big\}_{1 \le a \le u}$ by requiring that 
\begin{equation}\label{e:defd}
\P( d_a \in I) = 
q^{-1} \P\Big( J \in I \cap \big(  (-\infty, - \mm^{1/\alpha -\eps}] \cup [ \mm^{1/\alpha -\eps}, \infty) \big) \Big)
\end{equation}
for every interval $I \subset \R$. We define 
\begin{equation*}
 H^\circ_{u,v,\mm}(\sigma) =  \frac{1}{\mm^{1/\alpha}}\sum_{1 \le a \le u} d_a \sigma_{c_a(1)} \sigma_{c_a(2)},
 \end{equation*}
and set $Z^\circ_v(u,\mm) = \sum_{\sigma\in \Sigma_N} \exp\big( H^\circ_{u,v,\mm}(\sigma)\big)$ and  $ F^\circ_{u,v,\mm} = \frac{1}{\mm} \E \big[\log  Z^\circ_v(u,m) \big]$.

We now define a quantity that represents the total number of loops and multi-edges among the $c_a$. We set
\[
f_{u,v} =\sum_{1\le i  \le v}    
\sum_{1\le a \le u} \one_{c_a = (i,i)} +
\sum_{1\le i < j \le v} \one\Bigg\{ \Bigg(\sum_{1\le a \le u}  \one_{c_a = (i,j)}\Bigg) > 1 \Bigg\}\cdot \Bigg( 
\sum_{1\le a \le u} \one_{c_a = (i,j)}  - 1\Bigg).
\]
\begin{lemma}
For every $\eps > 0$, there exists $c(\eps)>0$ such that
\begin{equation}\label{e:multic}
\P ( f_{S_N,N} \ge  N^{3 \alpha \eps} ) 
\le  \exp( - c N^\eps ).
\end{equation}
\end{lemma}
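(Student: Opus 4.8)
The plan is to control $f_{S_N,N}$ by splitting it into its loop contribution and its multi-edge contribution, and bounding each by a concentration argument. Write $f_{S_N,N} = f^{\mathrm{loop}} + f^{\mathrm{multi}}$, where $f^{\mathrm{loop}} = \sum_{a} \one_{c_a(1) = c_a(2)}$ counts self-edges and $f^{\mathrm{multi}}$ is the second sum. It suffices to show each term is smaller than $\tfrac12 N^{3\alpha\eps}$ except on an event of probability at most $\exp(-cN^\eps)$, since $S_N = \Theta(N^{1+\alpha\eps})$ and the target threshold $N^{3\alpha\eps}$ is much larger than the typical scale of either quantity.

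For the loop term: each $c_a$ is a uniform ordered pair from $\{(i,j) : 1\le i\le j\le v\}$ with $v = N$, so $\P(c_a(1) = c_a(2)) = N / \binom{N+1}{2} = 2/(N+1)$. Thus $f^{\mathrm{loop}}$ is a sum of $S_N = \Theta(N^{1+\alpha\eps})$ i.i.d.\ Bernoulli$(2/(N+1))$ variables, with mean $\Theta(N^{\alpha\eps})$. Since $N^{\alpha\eps} \ll N^{3\alpha\eps}$, the multiplicative Chernoff bound (already invoked for \Cref{l:concentrate}) gives $\P(f^{\mathrm{loop}} \ge \tfrac12 N^{3\alpha\eps}) \le \exp(-cN^{\alpha\eps}) \le \exp(-cN^\eps)$, adjusting $c$ as needed.

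For the multi-edge term, the cleanest route is a crude deterministic bound in terms of ordered pairs of indices landing on the same edge. Observe that $f^{\mathrm{multi}} \le \sum_{1\le a < b \le S_N} \one_{c_a = c_b}$, because each edge appearing with multiplicity $k \ge 2$ contributes $k - 1 \le \binom{k}{2}$ to the left side and is counted $\binom{k}{2}$ times on the right. Now $\E\big[\sum_{a<b} \one_{c_a = c_b}\big] = \binom{S_N}{2} \big/ \binom{N+1}{2} = \Theta(N^{2\alpha\eps})$, so by Markov's inequality $\P(f^{\mathrm{multi}} \ge \tfrac12 N^{3\alpha\eps}) \le C N^{-\alpha\eps}$ — but this is only polynomial decay, not the exponential decay claimed. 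To upgrade it, note that $X := \sum_{a<b}\one_{c_a = c_b}$ is a function of the i.i.d.\ vector $(c_1,\dots,c_{S_N})$ that changes by at most $S_N$ when one coordinate is resampled; a bounded-differences (McDiarmid) inequality then gives $\P(X \ge \E X + t) \le \exp(-2t^2/(S_N \cdot S_N^2)) = \exp(-2t^2/S_N^3)$. With $t = \tfrac14 N^{3\alpha\eps}$ and $S_N^3 = \Theta(N^{3+3\alpha\eps})$, this yields $\exp(-cN^{6\alpha\eps - 3 - 3\alpha\eps}) = \exp(-cN^{3\alpha\eps - 3})$, which decays only once $\alpha\eps > 1$ — too weak for the small $\eps$ we need.

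The actual obstacle, then, is getting genuine exponential (stretched-exponential) decay for the multi-edge count at the scale $N^{3\alpha\eps}$ with $\eps$ arbitrarily small. The right tool is a concentration inequality adapted to the \emph{low-variance} regime, since $\var(X) = \Theta(\E X) = \Theta(N^{2\alpha\eps})$ is far smaller than the worst-case bounded-differences heuristic suggests. I would instead represent $X = \sum_{e} \binom{m_e}{2}$ where $m_e$ is the (multinomial) number of $c_a$ equal to edge $e$, and use a Bernstein-type or self-bounding/entropy-method inequality for functions of independent variables with small variance proxy: indeed $X$ is a polynomial of degree $2$ in indicator variables, and one can show $\sum_a \sup_{c_a'}\big(X - X^{(a)}\big)_+^2 \lesssim \sum_a m_{c_a} \le 2X$ (each resampling of $c_a$ changes $X$ by at most the number of other indices currently sharing $c_a$'s edge), so $X$ is \emph{self-bounding}: $\sum_a (X - X^{(a)})_+ \le X$ and each increment is bounded. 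The self-bounding property (Boucheron–Lugosi–Massart) then gives $\P(X \ge \E X + t) \le \exp\!\big(-\tfrac{t^2}{2(\E X + t/3)}\big)$; plugging $\E X = \Theta(N^{2\alpha\eps})$ and $t = \tfrac14 N^{3\alpha\eps}$ makes the exponent $\asymp -N^{3\alpha\eps}$, hence $\P(f^{\mathrm{multi}} \ge \tfrac12 N^{3\alpha\eps}) \le \exp(-cN^{\alpha\eps}) \le \exp(-cN^\eps)$. Combining the loop and multi-edge bounds via a union bound completes the proof. The one point requiring care is verifying the self-bounding inequalities for $X$ rigorously; everything else is a standard concentration computation.
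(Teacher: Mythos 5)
Your decomposition into a loop count and a multi-edge count is sensible, and the loop bound (a $\Bi(S_N, 2/(N+1))$ variable with mean $\Theta(N^{\alpha\eps})$, hit with a multiplicative Chernoff bound at the much larger threshold $\tfrac12 N^{3\alpha\eps}$) is correct. You also correctly diagnose that Markov and bounded differences are too weak for the multi-edge term. The gap is in the tool you settle on: the self-bounding property does \emph{not} hold for $X=\sum_{a<b}\one_{c_a=c_b}$. Writing $m_e$ for the multiplicity of edge $e$ and $X^{(a)}$ for the sum with all terms involving index $a$ deleted, one has $X-X^{(a)}=m_{c_a}-1$, which is not bounded by $1$ (it can be as large as $S_N-1$), and $\sum_a\big(X-X^{(a)}\big)=\sum_e m_e(m_e-1)=2X$, not $\le X$. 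So both hypotheses of the Boucheron--Lugosi--Massart self-bounding theorem fail; the weakly self-bounding variants fail as well, since $\sum_a\big(X-X^{(a)}\big)^2=\sum_e m_e(m_e-1)^2$ is of order $X^{3/2}$ when a single edge carries most of the multiplicity, rather than $O(X)$. As written, the multi-edge bound is therefore not established, and this is precisely the point you flagged as ``requiring care.''

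The difficulty is an artifact of passing to colliding \emph{pairs}: the second sum in $f$ equals $\sum_{i<j}(m_{ij}-1)_+$ exactly, i.e.\ the number of indices $a$ whose edge $c_a$ duplicates some earlier $c_b$ with $b<a$. Conditionally on $c_1,\dots,c_{a-1}$, the indicator $\one\{\exists\, b<a:\ c_b=c_a\}$ is Bernoulli with parameter at most $S_N/A=O(N^{\alpha\eps-1})$, where $A=N(N+1)/2$, so $f^{\mathrm{multi}}$ is stochastically dominated by $\Bi\big(S_N, CN^{\alpha\eps-1}\big)$, whose mean is $O(N^{2\alpha\eps})\ll N^{3\alpha\eps}$; a Chernoff bound for this dominated sum then gives decay much stronger than $\exp(-cN^{\eps})$, and combining with your loop bound finishes the proof. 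This sequential viewpoint is essentially what the paper does: it runs the edge process one edge at a time, observes that $\{f_{S_N,N}\ge N^{3\alpha\eps}\}$ is the event that the waiting time $T$ to accumulate $S_N-N^{3\alpha\eps}$ \emph{distinct} non-loop edges satisfies $T\le S_N$, writes $T$ as a sum of independent geometric random variables, and bounds the lower tail of $T$ by an exponential-moment computation. Either route works; your high-level plan is fine, but the multi-edge step needs to be replaced by one of these arguments.
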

\begin{proof}
Consider a sequence of random multi-graphs $\mathbb G_t$ on the vertex set $\{1,\dots, N\}$ built in the following way. Let $\mathbb G_0$ be the graph with no edges, and for each $t \in \unn{1}{S_N}$, let $\mathbb G_t$ be the graph with edge set $\{c_a\}_{1\le a \le t}$. 
The definition of $\mathbb G_t$ naturally extends to $t> S_N$ by choosing additional multi-edges uniformly at random. 
Let 
$T = \min \big\{ t : 
|\mathcal E_t | \ge S_N -  N^{3 \alpha \eps} \big\},$
 where 
\[
\mathcal E_t =
\big\{
(i,j) : 1 \le i < j \le N, \, (i,j) = c_a \text{ for some } a \le t
\big\}
\]
is the set of (non-loop) edges that have been added at time $t$ (after removing duplicates). Observe that 
\[
\P ( f_{S_N,N} \ge N^{3 \alpha \eps} ) 
= \P( T \le S_N),
\]
so it suffices to bound $\P( T \le S_N)$.

Let $T_1 = \min \{ t : 
|\mathcal E_t | \ge 1
\}$ and define $T_i$ for $i \ge 2$ by
\[
T_{i} 
= \min \{ t : 
|\mathcal E_t | \ge i
\} - \min \{ t : 
|\mathcal E_t | \ge i-1
\}.
\]
Then, by definition,
\begin{equation}\label{Tsum}
T = \sum_{i=1}^{S_N - N^{3\alpha\eps}}
T_i.
\end{equation}
We say that $X$ is a geometric random variable with parameter $p$ if $\P(X=k) = (1-p)^{k-1} p$ for all $k \ge 1$; then $\E[X] = p^{-1}$. We observe that each $T_i$ in \eqref{Tsum} is a geometric random variable with parameter
\[
p_i = \frac{A - N - i + 1}{A}
\]
where $A = N(N+1)/2$ is the number of possible edges and loops on a graph with $N$ vertices. Using this representation, \eqref{Tsum}, and an integral approximation, we compute that 
\begin{align}
\E[T]
=
A \sum_{i=1}^{S_N - N^{3\alpha\eps}}%\nonumber
\frac{1}{A - N - i + 1}
%&\ge A \int_{2}^{S_N - N^{3\alpha\eps} - 1} \frac{1}{A - N - x - 1}\,dx\\ 
\ge A \log\left(
\frac{A-N }{A-N - S_N + N^{3\alpha\eps} + 2}\right)%\nonumber\\
%&\ge A \cdot \frac{S_N - N^{3\alpha\eps} - 2}{A-N} 
\ge S_N - 2 N^{3\alpha\eps}\label{etlast}
\end{align}
for sufficiently large $N$ (depending on $\eps$), where we used 
$\log\big(1/(1-x)\big) \ge x$
in \eqref{etlast}. 
Defining $T^\circ = T - \E[T]$, we bound
\begin{align}
\P( T \le S_N) = 
\P\big( T^\circ \le S_N - \E[T]\big) \le \P\left( T^\circ\le - 2 N^{3\alpha\eps} \right)
%=\P\left( e^{-T^\circ} \ge e^{ 2 N^{3\alpha\eps} } \right)
\le e^{ - 2 N^{3\alpha\eps} } \E \big[  e^{-T^\circ} \big].\label{Tcircreduce}
\end{align}
Then to complete the proof, it suffices to bound the right side of \eqref{Tcircreduce}.

Writing $T_i^\circ = T^\circ_i - \E\big[ T_i^\circ \big]$, we have
\begin{align}
\E \big[  e^{-T^\circ} \big]
= \prod_{i=1}^{S_N - N^{3\alpha\eps}  }
\E \big[  e^{-{T_i^\circ}} \big] 
&= \prod_{i=1}^{S_N - N^{3\alpha\eps} }
\frac{e^{1/p_i}}{1 + p_i^{-1}( e -1 )}
\le 
%\prod_{i=1}^{S_N - N^{3\alpha\eps} }
%e^{ (p_i^{-1} - 1)}
%\\
%& = 
\exp 
\sum_{i=1}^{S_N - N^{3\alpha\eps} }
( p_i^{-1} - 1 )
.\label{picalc}
\end{align}
We compute
\begin{equation}
 p_i^{-1} -1  =  \frac{A}{A - N - i + 1} -1
= 
\frac{N + i - 1 }{A - N - i + 1}
\le 
\frac{N + S_N - 1 }{A - N - S_N + 1}
\le C_1 N^{-2} S_N \label{previneq}
\end{equation}
for sufficiently large $N$, and a constant $C_1 > 1$. 
Inserting \eqref{previneq} into 
\eqref{picalc} and \eqref{Tcircreduce}, we find
\begin{equation*}
\P(T\le S_N) 
\le 
\exp\left( -2 N^{3 \alpha \eps} +  C_1 N^{-2} S_N^2\right)
\le
\exp\left( -2  N^{3 \alpha \eps} +  C_1 N^{-2} S_N^2\right)
\le \exp( -c N^{\eps}).
\end{equation*}
This completes the proof.
\end{proof}

\begin{lemma}\label{l:multimodel}
There exists $c> 0$ such that the following holds for all $\eps \in (0,c)$. We have 
$\lim_{N\rightarrow \infty} F^\circ_{S_N,N,N} = \lim_{N\rightarrow \infty}  \widetilde  F_N,$
if the limit on the left exists.
\end{lemma}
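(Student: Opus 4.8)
The plan is to couple the two models $\widetilde H$ and $H^\circ_{S_N,N,N}$ on a common probability space and show that, with overwhelming probability, one can pass from the edge set of $H^\circ$ to that of $\widetilde H$ by adding or removing only a negligible number of couplings, so that the edge-perturbation estimate \eqref{e:add} controls the difference of free energies. Concretely, $\widetilde H$ is built from $S_N$ edges sampled \emph{without} replacement from the simple pairs $\{(i,j): i<j\}$, while $H^\circ_{S_N,N,N}$ is built from $S_N$ edges sampled \emph{with} replacement from $\{(i,j): i\le j\}$ (allowing loops and multi-edges), with the coupling weights $b_{ij}$ resp.\ $d_a$. Note first that the two weight distributions essentially agree: $d_a$ has the law of $J$ conditioned on $|J|\ge N^{1/\alpha-\eps}$, which is exactly the law of $b_{ij}$ (since $R=N^{1/\alpha-\eps}$ and $\mm=N$), and the normalization $\mm^{1/\alpha}=N^{1/\alpha}$ matches. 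So the only discrepancy is combinatorial: loops, multi-edges, and the with/without-replacement distinction.

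The key steps, in order. First, condition on the multiset of edges $\{c_a\}_{a=1}^{S_N}$ produced by the with-replacement sampling and observe that $f_{S_N,N}$ counts exactly the number of edge-slots that are either loops or ``excess'' copies in a multi-edge; removing these $f_{S_N,N}$ couplings from $H^\circ$ yields a Hamiltonian supported on a simple graph with $S_N - f_{S_N,N}$ distinct non-loop edges, chosen (conditionally on its edge count) uniformly among simple graphs with that many edges — this is the same exchangeability argument used in the proof of \Cref{l:fixededge}. Then add back $f_{S_N,N}$ fresh edges, sampled to reach exactly $S_N$ distinct edges uniformly at random, with weights of law $b_{ij}$; by exchangeability this produces a Hamiltonian with the law of $\widetilde H$. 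Thus $\widetilde H$ and $H^\circ_{S_N,N,N}$ are coupled through the addition/subtraction of at most $2 f_{S_N,N}$ couplings, each of expected weight $O(N^{1/\alpha-\eps})$ in absolute value, exactly as in \eqref{e:edgeremoval}--\eqref{e:add}. On the event $\{ f_{S_N,N} < N^{3\alpha\eps}\}$, \eqref{e:add} then gives $|\E[\log Z^\circ] - \E[\log\widetilde Z]| \le C N^{3\alpha\eps}$, so after dividing by $N$ and letting $\eps$ be small the contribution is $o(1)$. On the complementary event, whose probability is $\exp(-cN^\eps)$ by \eqref{e:multic}, one uses the crude deterministic bounds $|\log Z^\circ|, |\log \widetilde Z| \le N\log 2 + N^{-1/\alpha}\sum|\text{weights}|$ together with a Hölder argument and a $(1+\eps)$-moment bound on the weights (valid since $\alpha>1$), exactly mirroring \eqref{conc2}--\eqref{e:c3}, to see that this event contributes $c^{-1}N^{5}\exp(-cN^\eps) = o(N)$. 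Combining gives $|\widetilde F_N - F^\circ_{S_N,N,N}| \to 0$, hence the limits agree when one exists.

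I expect the main obstacle to be the bookkeeping in the coupling: one must carefully track the conditional law of the simple-graph skeleton of the with-replacement sample after deleting loops and multi-edge duplicates, and verify that it is uniform among simple graphs with the realized (random) number $S_N - f_{S_N,N}$ of edges, so that the re-addition step genuinely restores the law of $\widetilde H$. This requires invoking exchangeability of the edge labels at two places and being careful that deletion followed by random re-addition commutes appropriately with conditioning on $f_{S_N,N}$ — but it is the same mechanism already deployed in \Cref{l:fixededge}, so no genuinely new idea is needed. A secondary, purely technical point is that $\mm = N$ makes $q = \P(|J|\ge N^{1/\alpha-\eps}) = p_N$, so the weight laws literally coincide and no separate weight-coupling estimate is needed; one should state this identification explicitly at the start of the proof.
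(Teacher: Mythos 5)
Your proposal is correct and follows essentially the same route as the paper: the same coupling (delete the $f_{S_N,N}=|D|$ loop/duplicate slots from $H^\circ_{S_N,N,N}$, then restore that many uniformly chosen fresh distinct edges to recover the law of $\widetilde H$), the same use of \eqref{e:add} on the event $\{f_{S_N,N}<N^{3\alpha\eps}\}$, and the same H\"older/moment argument on the complementary event via \eqref{e:multic}. The explicit identification of the weight laws ($q=p_N$, so $d_a\eqdist b_{ij}$) is left implicit in the paper but is a worthwhile clarification.
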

\begin{proof}
\begin{comment}
We first identify the number of loops and multi-edges among the $c_a$. For each $a$, $c_a$ is one of $N(N+1)/2$ possible edges. For any $(i,j)$ with $i\le j$, let 

be the number of times the edge $(i,j)$ is duplicated, and let $f = \sum_{ i \le j} f_{ij}$ be the total number of duplicates. %Though the $f_{ij}$ are dependent, this dependence is weak,
We have 
\begin{equation}\label{peach}
\E\big[f^k\big]\le C \sum_{ i \le j} \E\big[f^k_{ij}\big]
\le C N^2 \E\big[f^k_{11}\big] \le C N^2 \E\big[f^2_{12}\big], 
\end{equation}
for some $C(k) > 0$, using that the $f_{ij}$ are identically distributed and $|f_{11}| \le 1$. Using the definition of $S_N$ in \eqref{definitionS}, we compute $\E \big [f^2_{11} \big] \le C N^{-2 + 2 \alpha \eps}$. Inserting this into \eqref{peach} gives
\begin{equation}
\E \big[ f^k \big] \le C(k) N^{2 \alpha \eps}
\end{equation}
for any $k \ge 0$, where $C(k,  \eps) >0$ is a constant. Then, by Markov's inequality, we have
%\begin{equation}
%\P\left( f \ge t \right) \le %\frac{ C(k) N^{2 \alpha %\eps}}{t^k}.
%\end{equation}
%Choosing $k$ large enough in a %way that depends only on %$\alpha$ and $\eps$, we find %that 
%\begin{equation}\label{e:multic}
%\P\left( f \ge N^{3\alpha\eps} %\right) \le C N^{-100}.
%\end{equation}
\end{comment}

Given \eqref{e:multic}, the rest of the proof proceeds similarly to the proof of \Cref{l:fixededge}. We indicate only the main points here. Let 
\[
D 
= \big\{ a \in \{1,2,\dots, S\} :
c_a =c_b \text{ for some } b<a, \text{ or $c_a = (i,i)$ for some $i\in \unn{1}{N}$}
\big\}
\]
denote the set of duplicate edges and loops. As described below \eqref{e:c1}, to sample from the Hamiltonian $\tilde H$, one may begin with $H^\circ_{S_N,N,N}$, remove all edges $c_a$ such that $a\in D$, and restore $|D|$ edges chosen uniformly at random from all size $|D|$ subsets of distinct edges contained in
\[
\{ (i,j) : 1 \le i < j \le N \} \setminus \{c_1, c_2, \dots, c_S \},
\]
where $\{c_1, c_2, \dots, c_S \}$ denotes the original set of edges in  $H^\circ_{S,N,N}$.

As shown in \eqref{e:add}, the addition or deletion of some edge $c_a$ with weight $d_a(N)$ in the Hamiltonian $H^\circ_{S_N,N, N}$ changes $\E \big[\log Z_N^\circ(S,N) \big]$ by at most a constant $C>1$. Let $\mathcal A$ denote the event where $f \le N^{3\alpha\eps}$ holds. Then, under the coupling given in the previous paragraph, we have
\begin{equation}\label{e:c22}
 \bigg| \E \Big[ \big(\log \tilde  Z - \log  Z^\circ_N(S,N) \big) \one_{\mathcal A} \Big] \bigg|
\le C N^{3 \alpha \eps}.
\end{equation}
Further, by a computation nearly identical to \eqref{conc2}, we have 
\begin{equation}\label{peach2}
\E\Big[\big|\log \tilde Z \big|^{1+\eps}\Big] + \E\Big[\big|\log Z^\circ_N(S_N,N) \big|^{1+\eps}\Big]  \le
CN^5.
\end{equation}
H\"older's inequality shows that
\begin{align}
 \bigg| \E \Big[  \big(\log  Z^\circ_N(S,N)  - \log \tilde Z\big) \one_{\mathcal A^c}  \Big] \bigg|
 \le \P(\mathcal A^c)^{\frac{\eps}{1 + \eps}}
 \E \Big[  \big| \log Z^\circ_N(S,N)  \big|^{1+\eps}   \Big]^{\frac{1}{1+\eps}} +
 \P(\mathcal A^c)^{\frac{\eps}{1 + \eps}} \E \Big[  \big|\log \tilde Z \big|^{1+\eps}   \Big]^{\frac{1}{1+\eps}}.\label{peach3}
\end{align}
The conclusion now follows from combining \eqref{e:multic}, \eqref{peach2}, and \eqref{peach3}.
\end{proof}
\section{Free Energy for Sparse Hamiltonian}\label{s:sparse}
In this section we apply the combinatorial interpolation strategy of \cite{bayati2010combinatorial} to show the existence of the limit of the free energy for the multi-edge model.
%and we use the shorthand $Z^\circ_v(u) = Z^\circ_v(u,N)$.

We begin by stating two preliminary lemmas. The first is our main interpolation result.   We prove it at the end of \Cref{s:lemma4}, below. In its statement, the notation $\Bi(n,p)$ denotes a binomial random variable with $n$ trials and success probability $p$, and we recall $S_N$ was defined in \eqref{definitionS}.  We also recall $Z^\circ_v(u, m)$ is the partition function for the Hamiltonian $H^\circ_{u,v,\mm}$ defined in \Cref{s:multiedge}.
\begin{lemma}\label{l:main} For every $1\le N_1, N_2 \le N$ such that $N_1 + N_2 = N$, we have
\begin{equation}\label{e:main}
\E\big[ \log Z^\circ_N (S_N,N)\big] \ge \E \big[\log Z^\circ_{N_1} (\mathcal M_1,N)\big] + \E \big[\log Z^\circ_{N_2} (\mathcal M_2,N)\big],
\end{equation}
where $\mathcal M_1$ is distributed as $\Bi(S_N, N_1/N)$ and $\mathcal M_2 = S_N - \mathcal M_1$ is distributed as $\Bi(S_N, N_2/N)$. Here $\mathcal M_1$ and $\mathcal M_2$ are independent of the random variables in the definition of $H^\circ$.
\end{lemma}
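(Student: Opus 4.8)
The plan is to prove \eqref{e:main} by the combinatorial interpolation method of \cite{bayati2010combinatorial}, changing the law of one edge at a time so as to pass from the single system $H^\circ_{S_N,N,N}$ on $N$ vertices to two decoupled systems on $N_1$ and $N_2$ vertices. First I would fix a partition $\unn{1}{N}=B_1\sqcup B_2$ with $|B_k|=N_k$ and, for $r\in\unn{0}{S_N}$, define $Z_r$ to be the partition function of a Hamiltonian with $S_N$ edges in which edges $a>r$ are \emph{original}, meaning $c_a$ is uniform on $\{(i,j):1\le i\le j\le N\}$ with weight $d_a(N)$ as in \eqref{e:defd}, while edges $a\le r$ are \emph{split}: one independently draws a block index $k_a$ with $\P(k_a=k)=N_k/N$ and then draws $c_a$ uniformly from $\{(i,j):1\le i\le j,\ i,j\in B_{k_a}\}$, again with weight $d_a(N)$; all the $k_a,c_a,d_a$ are mutually independent. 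Then $Z_0=Z^\circ_N(S_N,N)$, while at $r=S_N$ the Hamiltonian is a sum of a function of $\sigma|_{B_1}$ and a function of $\sigma|_{B_2}$; writing $\mathcal M_1=\#\{a:k_a=1\}\sim\Bi(S_N,N_1/N)$ and $\mathcal M_2=S_N-\mathcal M_1$, conditionally on $(k_a)$ the two pieces are independent copies of the Hamiltonians of $H^\circ_{\mathcal M_1,N_1,N}$ and $H^\circ_{\mathcal M_2,N_2,N}$, so $\log Z_{S_N}=\log Z^\circ_{N_1}(\mathcal M_1,N)+\log Z^\circ_{N_2}(\mathcal M_2,N)$, whose expectation is the right-hand side of \eqref{e:main}. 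Thus \eqref{e:main} reduces to showing that $\E[\log Z_r]$ is nonincreasing in $r$.

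To obtain the monotonicity I would fix $r$, condition on all edges except the $(r+1)$-st, and write $\langle\cdot\rangle$ for the resulting Gibbs measure with two-point function $m_{ij}=\langle\sigma_i\sigma_j\rangle$, so that $\log Z_r-\log Z_{r+1}=\log\langle e^{g_{\mathrm{orig}}}\rangle-\log\langle e^{g_{\mathrm{split}}}\rangle$ for a fresh weight $d$ and a fresh edge of the respective type, $g_\bullet=N^{-1/\alpha}d\,\sigma_I\sigma_J$. Comparing the two edge laws pair by pair — the original law has density $\tfrac{2}{N(N+1)}$ on every pair $i\le j$, the split law has density $\tfrac{2}{N(N_k+1)}$ on pairs inside $B_k$ and vanishes on crossing pairs — yields, after simplification,
\[
\E[\log Z_r]-\E[\log Z_{r+1}]=\frac{N_1N_2}{N(N+1)}\,\E\big[\,2\,\Xi_{\mathrm{cr}}-\Xi_{B_1}-\Xi_{B_2}\,\big],
\]
where $\Xi_{\mathrm{cr}}$ averages $\log\langle e^{N^{-1/\alpha}d\,\sigma_i\sigma_j}\rangle$ over independent uniform $i\in B_1$, $j\in B_2$; $\Xi_{B_k}$ averages it over the pairs $\{i\le j\}$ in $B_k$; and the outer expectation is over $\langle\cdot\rangle$ and $d$. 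Using $\langle e^{u\sigma_i\sigma_j}\rangle=\cosh u\,(1+\tanh u\,m_{ij})$, the $\log\cosh u$ contributions cancel in $2\Xi_{\mathrm{cr}}-\Xi_{B_1}-\Xi_{B_2}$, reducing matters to the corresponding averages of $\log(1+\theta m_{ij})$ with $\theta=\tanh(N^{-1/\alpha}d)$.

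It then remains to prove the single-edge inequality: for \emph{every} Gibbs measure $\langle\cdot\rangle$ on $\Sigma_N$, $\E_d[2\Xi_{\mathrm{cr}}-\Xi_{B_1}-\Xi_{B_2}]\ge0$, where now the $\Xi$'s denote the pairwise averages of $\log(1+\theta m_{ij})$. This is where the symmetry of $J$ — hence of $d$, hence of $\theta$ — is essential: it gives $\E_d[\log(1+\theta m)]=\tfrac12\E_d[\log(1-\theta^2m^2)]=:\tilde h(m^2)$, and $\tilde h(x)=-\sum_{k\ge1}\tfrac{1}{2k}\E_d[\theta^{2k}]\,x^k$ has nonnegative coefficients (so is decreasing on $[0,1]$) and is finite at $x=1$, with $\tilde h(1)=-\E_d[\log\cosh(N^{-1/\alpha}d)]$ finite because $\E|J|<\infty$. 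Writing $g=N_1^{-1}\one_{B_1}-N_2^{-1}\one_{B_2}$, $M=(m_{ij})$, and letting $M^{\odot k}$ be the $k$-th Hadamard power of $M$, one finds that after integrating out $d$ the bracket equals $-g^{\top}\tilde h(M^{\odot 2})g$ plus diagonal corrections. For the first term, $M$ is positive semidefinite (it is the Gram matrix of the spins $\sigma_i$ in $L^2(\langle\cdot\rangle)$), so by the Schur product theorem every $M^{\odot k}$ is positive semidefinite, whence $-g^{\top}\tilde h(M^{\odot 2})g=\sum_{k\ge1}\tfrac{1}{2k}\E_d[\theta^{2k}]\,g^{\top}M^{\odot 2k}g\ge0$. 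The diagonal corrections come from the self-edges ($i=j$) present in $\Xi_{B_k}$ but absent from $\Xi_{\mathrm{cr}}$; they equal $(N_k+1)^{-1}$ times the average of $\tilde h(m_{ij}^2)-\tilde h(1)$ over $i,j\in B_k$, which is nonnegative because $\tilde h$ is decreasing and $|m_{ij}|\le1$. Combining, $\E_d[2\Xi_{\mathrm{cr}}-\Xi_{B_1}-\Xi_{B_2}]\ge0$, giving the monotonicity, hence \eqref{e:main}.

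The hard part is this single-edge inequality, and within it the two features that distinguish the heavy-tailed case from the bounded-coupling one: getting the sign right, and controlling an expansion in $\theta$ that would diverge as $\theta m\to-1$. Both are handled by the symmetry of the coupling law, which trades the indefinite, only conditionally convergent $\log(1+\theta m)$ for the manifestly negative $\tfrac12\log(1-\theta^2m^2)$, whose even-power series is dominated by $\sum_{k\ge1}\tfrac{1}{2k}\E_d[\theta^{2k}]=\E_d[\log\cosh(N^{-1/\alpha}d)]\le N^{-1/\alpha}\E|J|<\infty$; this is precisely where the assumption $\alpha>1$ enters. Everything else — the pair-by-pair comparison of the two edge laws and the bookkeeping for self-edges (which appear because an edge is an unordered pair $i\le j$) — is routine.
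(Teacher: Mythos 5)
Your proposal is correct, and it follows the same interpolation skeleton as the paper --- replace, one at a time, the edges drawn uniformly from $\{(i,j)\}_{1\le i\le j\le N}$ by edges drawn via the two-step block-sampling scheme, identify the fully split endpoint with $\log Z^\circ_{N_1}(\mathcal M_1,N)+\log Z^\circ_{N_2}(\mathcal M_2,N)$, and reduce everything to a one-step monotonicity statement --- but your proof of that one-step inequality is genuinely different from the paper's. The paper conditions on $\widehat{\G}_r$ and $d_r$, Taylor expands $\log\bigl(1-(1-e^{\pm 2d_r})\mu(\sigma_x=\sigma_y)\bigr)$ (splitting into the cases $d_r<0$ and $d_r>0$ so that the expansion parameter lies in $[0,1)$, and only at the very end pairing the two cases via the symmetry of the law of $d_r$), introduces $k$ replicas, partitions the vertices into equivalence classes $O_s$ of replica agreement, and closes the argument with the convexity of $x\mapsto x^2$ applied to the class sizes. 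You instead write $\langle e^{u\sigma_i\sigma_j}\rangle=\cosh u\,(1+\tanh u\,\langle\sigma_i\sigma_j\rangle)$, symmetrize over the sign of $d$ \emph{first} to replace $\log(1+\theta m)$ by $\tfrac12\log(1-\theta^2m^2)$, expand in even powers, and conclude from positive semidefiniteness of the Hadamard powers of the Gram matrix $M=(\langle\sigma_i\sigma_j\rangle)$ via the Schur product theorem, with the self-edge terms handled by monotonicity of $\tilde h$. The two closing inequalities are cousins (the paper's convexity step is a Cauchy--Schwarz statement about indicator overlaps; yours is one about two-point functions), but your symmetrize-then-expand order is arguably cleaner for unbounded couplings: it yields a single absolutely convergent series dominated by $\E_d[\log\cosh(N^{-1/\alpha}d)]\le N^{-1/\alpha}\E|d|<\infty$, avoiding the paper's case analysis on the sign of $d_r$. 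Both arguments use the symmetry of $J$ and the finiteness of $\E|J|$ in essential ways, and your edge-law bookkeeping (the $\tfrac{N_1N_2}{N(N+1)}$ prefactor and the diagonal corrections coming from unordered pairs with $i=j$) checks out.
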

We also require the following sub-additivity lemma.

\begin{lemma}[{\cite[Theorem 23]{de1952some}}]\label{l:subadd}
Suppose that  the sequence $(a_N)_{N=1}^\infty$ satisfies 
${a_N \le a_{N_1} + a_{N - N_1} + \varphi(N)}$
for all $N, N_1$ such that $N/3 \le N_1 \le 2N/3$, for some positive increasing function $\varphi$ with $\int_1^\infty  \varphi(t) t^{-2}\,dt  < \infty.$ 
Then $\lim_{N\rightarrow \infty} a_N/N =  L$ for some $L$ such that ${ -\infty \le  L  < \infty}$. 
\end{lemma}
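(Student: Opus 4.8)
The plan is to follow the near-subadditive argument of de Bruijn and Erdős: first treat the subsequence of powers of two, where an admissible balanced split is always available via $2^j = 2^{j-1} + 2^{j-1}$, and then bootstrap from $\{a_{2^j}\}$ to the full sequence. Throughout, the hypothesis $\int_1^\infty \varphi(t)t^{-2}\,dt < \infty$ will be used in two ways: combined with monotonicity of $\varphi$ it gives $\varphi(t) = o(t)$ (since $\varphi(t)/(2t) \le \int_t^{2t}\varphi(s)s^{-2}\,ds \to 0$), and it makes the dyadic sums $\sum_i \varphi(2^i)2^{-i}$ converge, with tails tending to $0$ (compare each term with $\int_{2^{i-1}}^{2^i}\varphi(s)s^{-2}\,ds$).

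\emph{Step 1 (powers of two).} Splitting $2^j = 2^{j-1} + 2^{j-1}$ yields $a_{2^j}/2^j \le a_{2^{j-1}}/2^{j-1} + \varphi(2^j)2^{-j}$, and iterating down to a fixed level $j_0$ gives $a_{2^j}/2^j \le a_{2^{j_0}}/2^{j_0} + \sum_{i > j_0}\varphi(2^i)2^{-i}$. The sum is bounded by $2\int_{2^{j_0}}^\infty \varphi(t)t^{-2}\,dt$, which vanishes as $j_0 \to \infty$; hence $\limsup_j a_{2^j}/2^j \le \liminf_j a_{2^j}/2^j$, so $L := \lim_j a_{2^j}/2^j$ exists. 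Taking $j_0 = 1$ bounds $L$ above by a finite constant, while $L = -\infty$ remains possible.

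\emph{Step 2 (upper bound $\limsup_N a_N/N \le L$).} The key point is that for every integer $m \ge 2$ the interval $(m/3, 2m/3]$ contains a power of two (namely $2^{\lfloor \log_2(2m/3)\rfloor}$), so $\{2^k, m - 2^k\}$ is an admissible split of $m$ with remainder $m - 2^k \in [m/3, 2m/3)$. Peeling off powers of two in this way from $m_1 = N$ gives, after $O(\log N)$ steps, $a_N \le \sum_{\ell} a_{2^{k_\ell}} + a_1 + \sum_\ell \varphi(m_\ell)$, where the powers $2^{k_\ell}$ are nonincreasing and sum to $N-1$, and $m_{\ell+1} \le (2/3)m_\ell$. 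Fixing $\eps > 0$ and then $j_0$ with $a_{2^k}/2^k \le L + \eps$ for $k \ge j_0$: the terms with $k_\ell \ge j_0$ contribute at most $(L+\eps)N + O_\eps(1)$; only $O_\eps(1)$ terms have $k_\ell < j_0$, contributing $O_\eps(1)$; and since $\varphi(t) = o(t)$ and there are only $O(\log N)$ remainders, $\sum_\ell \varphi(m_\ell) = o(N)$ as well. Dividing by $N$ and sending $N \to \infty$ and then $\eps \to 0$ gives $\limsup_N a_N/N \le L$; running the same argument with an arbitrary constant $-M$ in place of $L+\eps$ covers the case $L = -\infty$, where it yields $a_N/N \to -\infty$.

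\emph{Step 3 (lower bound and conclusion).} Assume $L$ is finite. Given $N$ large, pick a power of two $M = 2^k \in [3N/2, 3N)$; then $\{N, M - N\}$ is an admissible split of $M$, so $a_M \le a_N + a_{M-N} + \varphi(M)$, i.e. $a_N \ge a_M - a_{M-N} - \varphi(M)$. Since $a_M/2^k \to L$ we have $a_M \ge (L - \eps)M$ for $N$ large, and since $M - N \ge N/2 \to \infty$, Step 2 gives $a_{M-N} \le (L+\eps)(M-N)$; using $M \le 3N$, $M - N \le 2N$, and $\varphi(M) = o(N)$, this yields $a_N \ge LN - O(\eps N) - o(N)$, hence $\liminf_N a_N/N \ge L$. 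Together with Step 2 this proves $\lim_N a_N/N = L$ with $-\infty \le L < \infty$. I expect the main obstacle to be precisely the restriction to balanced splits with ratio in $[1/3,2/3]$: unlike in Fekete's lemma one cannot split $N$ directly into many nearly-equal blocks, and Steps 2 and 3 are designed to get around this by peeling off, or embedding into, one power of two at a time while staying inside the allowed ratio window — with the integral hypothesis doing exactly the work of rendering the accumulated $\varphi$-errors negligible.
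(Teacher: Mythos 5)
The paper does not prove this lemma at all; it is quoted verbatim as Theorem 23 of de Bruijn and Erd\H{o}s \cite{de1952some}, so there is no internal proof to compare against. Your argument is a correct, self-contained reconstruction of the classical de Bruijn--Erd\H{o}s proof: establish the limit along powers of two (where the balanced split $2^j = 2^{j-1}+2^{j-1}$ is always admissible), then transfer the upper bound to all $N$ by greedily peeling off the power of two lying in $(m/3,2m/3]$, and finally obtain the matching lower bound by embedding $N$ as an admissible part of a nearby power of two $M=2^k\in[3N/2,3N)$. All the admissibility checks are right, and the two uses of the integral hypothesis (convergence of $\sum_i \varphi(2^i)2^{-i}$ and $\varphi(t)=o(t)$) are correctly derived from monotonicity. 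The one place where your justification is too quick is the claim that $\sum_\ell \varphi(m_\ell)=o(N)$ ``since $\varphi(t)=o(t)$ and there are only $O(\log N)$ remainders'': that reasoning as literally stated only yields $o(N\log N)$. The correct (and easy) fix is already implicit in your setup: since $m_{\ell+1}\le \tfrac23 m_\ell$ you have $\sum_\ell m_\ell \le 3N$, so writing $\varphi(m)\le \delta(m)\,m$ with $\delta(m)\to 0$ and splitting the sum at a threshold $m_\ell \ge M_0$ gives $\sum_\ell \varphi(m_\ell)\le 3\eps' N + O_{M_0}(1)$, hence $o(N)$. With that one sentence repaired, the proof is complete and matches the standard argument behind the cited theorem.
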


\subsection{Proof of \Cref{t:main}}
Let $1\le N_1, N_2 \le N$ be parameters. We define 
\begin{equation*}
S^{(1)} = \frac{S_N N_1}{N}, \qquad S^{(2)} = \frac{S_N N_2}{N}.
\end{equation*}
\begin{lemma}\label{l:newapprox}
Fix $\eps >0$. Then there exists $C(\eps) > 0$ such that the following holds.
For every $1\le N_1, N_2 \le N$ such that $N/3 \le N_1 \le 2N/3$ and $N_1 + N_2 =N$, we have
\begin{align}\label{m1swap}
\Big|
\E \big[\log Z^\circ_{N_1} (\mathcal M_1,N)\big]
-
 \E \big[\log Z_{N_1}^\circ (S^{(1)},N ) \big]
 \Big|
 &\le CN^{2/3}\\
\Big|
\E \big[\log Z^\circ_{N_2} (\mathcal M_2,N)\big]
-
 \E \big[\log Z_{N_2}^\circ (S^{(2)},N ) \big]
 \Big|
& \le CN^{2/3}. \label{m2swap}
\end{align}
\end{lemma}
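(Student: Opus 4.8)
The plan is to show that replacing the binomial edge-count $\mathcal M_1$ by its deterministic mean $S^{(1)} = S_N N_1/N$ costs at most $CN^{2/3}$ in expected log-partition function, and symmetrically for $\mathcal M_2$; by symmetry I will only write out the argument for $\mathcal M_1$. The key ingredient is the ``one edge costs at most a constant'' estimate already established in \eqref{e:add} (and restated in the proof of \Cref{l:multimodel}): adding or deleting a single edge $c_a$ with weight $d_a(N)$ changes $\E[\log Z^\circ_N(\cdot,N)]$ by at most an absolute constant $C>1$. Iterating this, for any two integers $k_1 \le k_2$ one has
\begin{equation*}
\Big| \E\big[\log Z^\circ_{N_1}(k_1,N)\big] - \E\big[\log Z^\circ_{N_1}(k_2,N)\big] \Big| \le C\,|k_2 - k_1|,
\end{equation*}
since one may couple the two edge collections by keeping the first $k_1$ edges common and adding $k_2-k_1$ fresh independent edges. (The edges $c_a$ are i.i.d., so this coupling is legitimate.)

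Next I condition on $\mathcal M_1$ and apply the triangle inequality together with the displayed Lipschitz bound:
\begin{equation*}
\Big| \E\big[\log Z^\circ_{N_1}(\mathcal M_1,N)\big] - \E\big[\log Z^\circ_{N_1}(\lfloor S^{(1)}\rfloor,N)\big] \Big|
\le C\,\E\big[\, |\mathcal M_1 - \lfloor S^{(1)}\rfloor|\,\big]
\le C\big(1 + \E\big[\,|\mathcal M_1 - \E \mathcal M_1|\,\big]\big).
\end{equation*}
Since $\mathcal M_1 \sim \Bi(S_N, N_1/N)$ has variance $S_N (N_1/N)(N_2/N) \le S_N \le CN^{1+\alpha\eps}$, Jensen's inequality gives $\E|\mathcal M_1 - \E\mathcal M_1| \le (\var \mathcal M_1)^{1/2} \le C N^{1/2 + \alpha\eps/2}$. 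Choosing $\eps$ small enough (depending only on $\alpha$, exactly as in the earlier lemmas, so that $1/2 + \alpha\eps/2 < 2/3$) this is $\le C N^{2/3}$. Finally, $|S^{(1)} - \lfloor S^{(1)}\rfloor| \le 1$, so one more application of the Lipschitz bound replaces $\lfloor S^{(1)}\rfloor$ by $S^{(1)}$ in the notation $Z^\circ_{N_1}(S^{(1)},N)$ at the cost of another constant, and \eqref{m1swap} follows. The argument for \eqref{m2swap} is identical, using $\mathcal M_2 = S_N - \mathcal M_1 \sim \Bi(S_N, N_2/N)$ and the same variance bound.

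The only mild subtlety — and the place to be careful — is making the single-edge coupling precise when the edge count is a random variable rather than a fixed integer: one must exhibit, for each value $k$ of $\mathcal M_1$, a coupling of $H^\circ_{k,N_1,N}$ and $H^\circ_{\lfloor S^{(1)}\rfloor, N_1, N}$ differing by $|k - \lfloor S^{(1)}\rfloor|$ edge additions/deletions, and then take expectations over $\mathcal M_1$; this is exactly the mechanism used in the proofs of \Cref{l:fixededge} and \Cref{l:multimodel}, so it can be invoked essentially verbatim. No new difficulty arises from the fact that $Z^\circ$ is defined with the $d_a(N)$ weights (normalization $N$, not $N_1$), since \eqref{e:add}/\eqref{e:c22} were already stated for precisely these weights. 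Everything else is a routine variance computation, so I expect no real obstacle here; the main point is simply to record that binomial fluctuations of size $N^{1/2+o(1)}$ are dominated by $N^{2/3}$.
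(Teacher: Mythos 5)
Your proof is correct, and its core mechanism---coupling $H^\circ_{\mathcal M_1,N_1,N}$ to $H^\circ_{S^{(1)},N_1,N}$ by adding or deleting $|\mathcal M_1 - S^{(1)}|$ i.i.d.\ edges, each costing $O(1)$ in expected log-partition function via \eqref{e:add}---is exactly the paper's. Where you diverge is in how the fluctuation of $\mathcal M_1$ is handled. The paper splits on the Chernoff event $\mathcal A_1 = \big\{|\mathcal M_1 - S^{(1)}| \le c^{-1}N_1^{1/2}N^{\alpha\eps/2+\eps}\big\}$, bounds the contribution on $\mathcal A_1$ deterministically by $CN^{1/2+\alpha\eps/2+\eps}$ as in \eqref{pepper1}, and controls the complement via H\"older's inequality together with the a priori moment bound $\E\big[|\log Z^\circ|^{1+\eps}\big]\le CN^5$ of \eqref{pepper2}. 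You instead use the independence of $\mathcal M_1$ from the edge and weight variables to condition on $\mathcal M_1=k$, apply the Lipschitz bound at each fixed $k$, and bound the resulting $C\,\E\big[|\mathcal M_1 - S^{(1)}|\big]$ by $(\var \mathcal M_1)^{1/2}\le CN^{1/2+\alpha\eps/2}$ via Jensen. This is a genuine simplification: it dispenses with the Chernoff bound, H\"older's inequality, and the $N^5$ moment estimate, and even yields a marginally sharper exponent (no extra $N^{\eps}$). Both arguments share the same implicit caveat, which neither the lemma statement nor the paper's proof spells out: deducing the stated $N^{2/3}$ bound requires $\eps$ small relative to $\alpha$, so ``Fix $\eps>0$'' should be read as ``for $\eps$ sufficiently small,'' consistent with the surrounding lemmas. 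Your treatment of the non-integrality of $S^{(1)}$ via the floor is a detail the paper glosses over, and it is handled correctly.
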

\begin{proof}
We prove only \eqref{m1swap}, since the proof of \eqref{m2swap} is similar.
Recall the definition of $\mathcal M_1$ from \eqref{l:main}, and note that  $S^{(1)} = \E[ \mathcal M_1]$. By the Chernoff bound, there exists a constant $c( \eps) >0$ such that 
\begin{equation}\label{e:concentrate2}
\P\left( \mathcal A_1^c \right) \le \exp\left( -  cN^\eps \right),
\quad
\mathcal A_1 = 
\big\{
| \mathcal M_1 - S^{(1)} | \le c^{-1} N_1^{1/2} N^{\alpha\eps/2 + \eps}
\big\}.
\end{equation}
%An similar statement holds for the analogously defined event $\mathcal A_2$. 
Similarly to the coupling given below \eqref{e:multic}, we may couple the Hamiltonians $H^\circ_{\mathcal M_1, N_1, N}$ and  $H^\circ_{S^{(1)},N_1,N }$ by deleting or adding $| \mathcal M_1 - S^{(1)} |$ edges $c_a$ from $H^\circ_{\mathcal M_1, N_1, N}$, where each member of the set of modified edges is chosen uniformly at random from the set of all possible edges. By \eqref{e:add} and the definition of $\mathcal A_1$, we have
\begin{equation}\label{pepper1}
 \bigg| \E \Big[ \big(\log  Z^\circ_{N_1}(\mathcal M_1,N) - \log  Z^\circ_{N_1}(S^{(1)},N) \big) \one_{\mathcal A_1} \Big] \bigg|
\le C  N^{1/2 + \alpha\eps/2 + \eps}.
\end{equation}
Further, arguing similarly to \eqref{conc2}, we find
\begin{equation}\label{pepper2}
\E\Big[\big|\log Z^\circ_{N_1}(\mathcal M_1,N) \big|^{1+\eps}\Big] + \E\Big[\big|\log Z^\circ_{N_1}(S^{(1)},N) \big|^{1+\eps}\Big]  \le
CN^5.
\end{equation}
Then applying H\"older's inequality and using \eqref{e:concentrate2} (as in \eqref{peach3}) shows 
\[
 \bigg| \E \Big[ \big(\log  Z^\circ_{N_1}(\mathcal M_1,N) - \log  Z^\circ_{N_1}(S^{(1)},N) \big) \one_{\mathcal A_1^c} \Big] \bigg|
 \le C N^5 \exp( -c N^\eps).
\]
Combining the previous line with \eqref{pepper1} completes the proof.
\end{proof}
Recall that $\eps>0$ is a parameter. We write 
$H^\circ _{S^{(1)}, N_1 , N } = \hat H^\circ + \hat p^\circ,$
where we define 
\begin{align}\label{hatHcirc}
\hat H^\circ
&=
\frac{1}{N^{1/\alpha}} \sum_{1 \le a \le S^{(1)}} d_a(N) \sigma_{c_a(1)} \sigma_{c_a(2)} \one_{\big|N^{-1/\alpha} d_a(N)\big| \ge N_1^{-\eps} },\\
\hat p^\circ &= \frac{1}{N^{1/\alpha}}\sum_{1 \le a \le S^{(1)}} d_a(N) \sigma_{c_a(1)} \sigma_{c_a(2)}\one_{\big|N^{-1/\alpha} d_a(N)\big| < N_1^{-\eps} }.\label{hatHcirc2}
\end{align}
We also define
$\hat Z^\circ_N  = \sum_{\sigma\in \Sigma_N} \exp\big( \hat H^\circ (\sigma)\big).$
Set 
\begin{equation}\label{p1def}
p^{(1)}= p^{(1)}_N  = \P\Big( \big|N^{-1/\alpha} d_1(N)\big| \ge N_1^{-\eps} \Big) = \frac{N_1^{\alpha\eps} }{N^{\alpha\eps}},
\end{equation}
where the last equality follows from the definition of $d_a$ and \eqref{e:arep}, and we supposed that $N$ is sufficiently large (depending on $C_0$).
We let $\{x_a, y_a \}_{1\le a \le S^{(1)}}$ be a collection of mutually independent random variables such that for every interval $I \subset \R$, we have 
\begin{align}\label{e:arep22}
\P( x_a \in I ) &= \big(1-p^{(1)}\big)^{-1} \P \big( N^{-1/\alpha} d_1(N) \in  I \cap ( -N_1^{-\eps}, N_1^{-\eps} ) \big)\\
\P( y_a \in I )&= \big(p^{(1)}\big)^{-1} \P \Big( N^{-1/\alpha} d_1(N) \in  I \cap \big( (-\infty, -N_1^{-\eps} ] \cup  [N_1^{-\eps}, \infty )\big) \Big).\label{e:bdef2}
\end{align}
Let $L^\circ = \{L^\circ_a\}_{1 \le a \le S^{(1)} }$ be a collection of independent, identically distributed random variables such that $\P(L_a = 1) = p^{(1)}$ and $\P(L_a = 1) = 1- p^{(1)}$. We further impose the condition that the collection $\{L^\circ_a, x_a,y_a \}_{1 \le a \le S^{(1)} }$ is mutually independent.

With these definitions, we have the distributional equalities
\begin{equation}\label{e:resampling23}
\big\{N^{-1/\alpha} d_a(N) \big\}_{1 \le a \le S^{(1)} }\eqdist \{ (1- L^\circ_a )x_a + L^\circ_a y_a \}_{1 \le a \le S^{(1)}},
\end{equation}
\begin{equation}\label{lemon}
\widehat H^\circ(\sigma) \eqdist  \sum_{1 \le a \le S^{(1)} } L^\circ_a y_a\sigma_{c_a(1)} \sigma_{c_a(2)},\qquad  \hat p^\circ(\sigma) \eqdist  \sum_{1 \le a \le S^{(1)}} (1- L^\circ_a  )x_a\sigma_{c_a(1)} \sigma_{c_a(2)},
\end{equation}
with the dependence between $\hat H^\circ(\sigma)$ and $\hat p^\circ(\sigma)$  expressed through the $L^\circ_a$.
We observe that after conditioning on $ L^\circ$, the sums $\widehat H^\circ(\sigma)$ and $\hat p^\circ(\sigma)$ are independent.

\begin{lemma}\label{l:expmom2}
Fix $\eps >0$. Then there exists $C(\eps) > 0$ such that the following holds.
For every $1\le N_1 \le N$ such that $N/3 \le N_1 \le 2N/3$, we have
\[
\E \big[ \exp \big(  (1 - L_a) x_a \big) \big]\le \exp( C N^{-2\eps} ).
\]
\end{lemma}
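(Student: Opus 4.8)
The plan is to follow the proof of \Cref{l:expmom} almost verbatim, applying the elementary estimate \eqref{eq:one-to-two} to the random variable $X = (1-L_a)x_a$, where $L_a$ and $x_a$ are the variables appearing in \eqref{e:resampling23} and \eqref{e:arep22}.

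First I would verify the two hypotheses needed for \eqref{eq:one-to-two}. By \eqref{e:arep22}, the law of $x_a$ is supported in the interval $(-N_1^{-\eps}, N_1^{-\eps})$, so $|(1-L_a)x_a| \le |x_a| \le N_1^{-\eps} \le 1$. Moreover $L_a$ and $x_a$ are independent by construction, and $x_a$ is symmetric: this is inherited from the symmetry of $J$, which passes to $d_1(N)$ via \eqref{e:defd}, then to $N^{-1/\alpha}d_1(N)$, and finally to its symmetric truncation $x_a$ in \eqref{e:arep22}. Hence $\E[X] = \E[1-L_a]\,\E[x_a] = 0$.

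With the hypotheses in hand, \eqref{eq:one-to-two} gives
\[
\E\big[\exp\big((1-L_a)x_a\big)\big] \le \exp\big(\E[(1-L_a)^2 x_a^2]\big) \le \exp\big(\E[x_a^2]\big),
\]
where the second inequality uses $(1-L_a)^2 \le 1$. Since $|x_a| \le N_1^{-\eps}$ almost surely, we have $\E[x_a^2] \le N_1^{-2\eps}$, and since $N_1 \ge N/3$ this is at most $3^{2\eps}N^{-2\eps}$. This proves the claim with $C(\eps) = 3^{2\eps}$.

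I do not expect a genuine obstacle here: the argument is a direct simplification of the proof of \Cref{l:expmom}, and in fact easier, because the truncation in \eqref{e:arep22} is at the scale $N_1^{-\eps}$ and yields the required second-moment bound immediately, without any need to integrate against the power-law tail of $J$ as in \eqref{e:apple2}. The only point to keep track of is that the mean-zero property of $X$ relies on the symmetry of $x_a$ propagating correctly through the chain of definitions above, and that $N/3 \le N_1$ is what converts the bound $N_1^{-2\eps}$ into the stated $O(N^{-2\eps})$.
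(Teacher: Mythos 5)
Your proof is correct and takes essentially the same approach as the paper: both apply \eqref{eq:one-to-two} to $X=(1-L^\circ_a)x_a$ and reduce the claim to bounding $\E\big[(1-L_a^\circ)^2x_a^2\big]$ by $CN^{-2\eps}$. The only difference is that you get $\E[x_a^2]\le N_1^{-2\eps}\le 3^{2\eps}N^{-2\eps}$ directly from the almost-sure truncation $|x_a|<N_1^{-\eps}$ in \eqref{e:arep22}, whereas the paper integrates the power-law tail \eqref{e:rescaletail2} over $[N^{-\eps},N_1^{-\eps}]$ as in \eqref{e:apple22}; both yield the same order, your route is shorter, and you additionally make explicit the mean-zero and boundedness hypotheses of \eqref{eq:one-to-two} that the paper leaves implicit.
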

\begin{proof}
By \eqref{eq:one-to-two},
\begin{align}
\E \big[ \exp\big(  (1 - L^\circ_a) x_a \big) \big]
%&\le 
%1 + C\cdot\E \left[  (1 - L_a^\circ)^2 x_a^2  \right] \\
\le {C}  \exp( C\cdot  \E [  (1 - L_a^\circ)^2 x_a^2  ] ).\label{e:apple12}
\end{align}

Next, using the definition of $d_a$ from \eqref{e:defd}, we have 
\begin{equation}\label{e:rescaletail2}
\P\Big(\big|N^{-1/\alpha} d_a(N)\big| > t \Big)  = \frac{1}{N^{\alpha\eps} t^\alpha}
\end{equation}
for $t \ge N^{-\eps}$. 
Then using $|L^\circ_{ij}| \le 1$, $\big(1 - p^{(1)}\big)^{-1} \le 3$ (from \eqref{e:arep} and the assumption on $N_1$), and the definition \eqref{e:arep22}, we have
 \begin{align}\label{e:apple22}
\E \big[  (1 - L^\circ_a)^2 x^2_a  \big]
\le 3  \E\Big[ \big|N^{-1/\alpha} d_a(N)\big|^2 \one_{\big|N^{-1/\alpha} d_a(N)\big| < N_1^{-\eps} } \Big]
\le N^{-\alpha\eps} \int_{N^{-\eps}}^{N_1^{-\eps}} t^{1-\alpha } \, dt \le C N^{ -2\eps}.
\end{align}
This completes the proof.
\end{proof}

\begin{lemma}\label{l:newapprox2}
Fix $\eps >0$. Then there exists $C(\eps) > 0$ such that the following holds.
For every $1\le N_1, N_2 \le N$ such that $N/3 \le N_1 \le 2N/3$ and $N_1 + N_2 =N$, we have
\begin{align}\label{m1swap2}
\Big|
\E \big[ \log Z^\circ_{N_1} ( S_{N_1} ,N_1 )\big]
-
 \E \big[\log Z_{N_1}^\circ (S^{(1)},N ) \big]
 \Big|
 &\le C N^{1+ \epsilon(\alpha -2 )}\\
\Big|
\E \big[\log   Z^\circ_{N_2} ( S_{N_2} ,N_2 )\big]
-
 \E \big[\log Z_{N_2}^\circ (S^{(2)},N ) \big]
 \Big|
 &\le C N^{1+ \epsilon(\alpha -2 )}.\label{m2swap2}
\end{align}
\end{lemma}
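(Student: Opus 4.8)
The plan is to transcribe the reduction $H \rightsquigarrow \widehat H \rightsquigarrow \widetilde H$ of \Cref{s:reduction}, applied now to $H^\circ_{S^{(1)},N_1,N}$: first discard the small couplings via the splitting $H^\circ_{S^{(1)},N_1,N}=\hat H^\circ+\hat p^\circ$ already set up in \eqref{hatHcirc}--\eqref{hatHcirc2}; then observe that the surviving ``large-coupling'' Hamiltonian $\hat H^\circ$ --- although normalized by $N^{1/\alpha}$ and living on $N_1$ spins --- is distributionally \emph{exactly} the intrinsic model $Z^\circ_{N_1}(\,\cdot\,,N_1)$ with a $\mathrm{Binomial}$-distributed number of edges; and finally replace that random edge count by its mean, which coincides with $S_{N_1}$ up to $O(1)$. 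Since \eqref{m1swap2} controls $\E[\log(\cdot)]$ rather than the normalized free energy, the stated error $C N^{1+\epsilon(\alpha-2)}$ is, up to constants and for $\eps$ small, simply the largest of the errors incurred in these three steps. The one genuinely new point is the distributional identity in the middle step: the equality $y_a \eqdist N_1^{-1/\alpha}d_a(N_1)$ is where the \emph{exact} power-law tail \eqref{e:arep} (rather than mere regular variation) enters, and it is what lets the Hamiltonian with the ``wrong'' normalization $N^{1/\alpha}$ on $N_1$ vertices be reinterpreted as the correctly normalized $N_1$-vertex Hamiltonian, thereby closing the subadditive recursion fed into \Cref{l:subadd}. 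The two outer steps are near-verbatim copies of the arguments for \Cref{l:l2} and \Cref{l:fixededge}.

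\emph{Discarding $\hat p^\circ$.} Apply \Cref{lem:jensen} with $x=\hat H^\circ$ and $y=\hat p^\circ$. By \eqref{lemon}, the fact that the $x_a$ are symmetric and independent of $\hat H^\circ$ gives $\E\langle\hat p^\circ\rangle=0$, as in \Cref{l:l2}. For the upper Jensen bound, $\hat p^\circ(\sigma)\eqdist\sum_{a\le S^{(1)}}(1-L^\circ_a)x_a\sigma_{c_a(1)}\sigma_{c_a(2)}$ is, for fixed $\sigma$, a sum of independent terms, and since the $x_a$ are symmetric the factors $\sigma_{c_a(1)}\sigma_{c_a(2)}=\pm1$ do not matter, so \Cref{l:expmom2} yields
\[
\E_x\big[e^{\hat p^\circ(\sigma)}\big] \le \exp\big(C S^{(1)} N^{-2\eps}\big) \le \exp\big(C N^{1+\eps(\alpha-2)}\big)
\]
uniformly in $\sigma$, using $S^{(1)}=S_N N_1/N\le C N^{1+\alpha\eps}$ from \eqref{definitionS}. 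Hence $\big|\E[\log Z^\circ_{N_1}(S^{(1)},N)]-\E[\log\hat Z^\circ_N]\big|\le C N^{1+\eps(\alpha-2)}$.

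\emph{Identifying $\hat Z^\circ_N$.} By \eqref{e:rescaletail2}, $\P(|N^{-1/\alpha}d_a(N)|>t)=N^{-\alpha\eps}t^{-\alpha}$ for $t\ge N^{-\eps}$; conditioning this law on $\{|N^{-1/\alpha}d_a(N)|\ge N_1^{-\eps}\}$ and renormalizing --- which is precisely the definition of $y_a$ in \eqref{e:bdef2} --- produces a variable supported on $\{|\,\cdot\,|\ge N_1^{-\eps}\}$ with tail $N_1^{-\alpha\eps}t^{-\alpha}$ there, and the identical computation applied directly to $J$ shows that $N_1^{-1/\alpha}d_a(N_1)$ has exactly this law; thus $y_a\eqdist N_1^{-1/\alpha}d_a(N_1)$. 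In $\hat H^\circ$ the $c_a$ are i.i.d.\ uniform on $\{(i,j):1\le i\le j\le N_1\}$ and the $L^\circ_a$ are independent of $\{c_a,y_a\}$, so conditioning on $\mathcal N:=\sum_{a\le S^{(1)}}L^\circ_a$ leaves the active couplings equal in law to $\mathcal N$ i.i.d.\ uniform edges carrying i.i.d.\ weights distributed as $N_1^{-1/\alpha}d_a(N_1)$. Therefore $\hat Z^\circ_N\eqdist Z^\circ_{N_1}(\mathcal N,N_1)$ with $\mathcal N\sim\Bi(S^{(1)},p^{(1)})$ independent of the randomness defining $H^\circ$; in particular $\E[\log\hat Z^\circ_N]=\E[\log Z^\circ_{N_1}(\mathcal N,N_1)]$.

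\emph{Replacing $\mathcal N$ by $S_{N_1}$.} By \eqref{p1def} and \eqref{definitionS}, $\E[\mathcal N]=S^{(1)}p^{(1)}=S_N(N_1/N)^{1+\alpha\eps}$ differs from $S_{N_1}$ by at most $1$, while $\var(\mathcal N)\le S^{(1)}p^{(1)}\le C N^{1+\alpha\eps}$ and $\mathcal N$ concentrates around its mean with sub-Gaussian tails by the Chernoff bound. Exactly as in \Cref{l:fixededge} and \Cref{l:newapprox}, couple $Z^\circ_{N_1}(\mathcal N,N_1)$ and $Z^\circ_{N_1}(S_{N_1},N_1)$ by adding or deleting $|\mathcal N-S_{N_1}|$ uniformly chosen edges: on the event $\{|\mathcal N-S_{N_1}|\le c^{-1}N^{1/2+\alpha\eps/2+\eps}\}$ the per-edge estimate \eqref{e:add} bounds the resulting change in $\E[\log(\cdot)]$ by $C N^{1/2+\alpha\eps/2+\eps}$, and on the complement H\"older's inequality together with the moment bound \eqref{conc2} (applied to each partition function) makes the contribution exponentially small in $N^\eps$; hence $\big|\E[\log Z^\circ_{N_1}(\mathcal N,N_1)]-\E[\log Z^\circ_{N_1}(S_{N_1},N_1)]\big|\le C N^{1/2+\alpha\eps/2+\eps}$. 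Summing the three estimates and using $N^{1/2+\alpha\eps/2+\eps}\le N^{1+\epsilon(\alpha-2)}$, valid once $\eps(3-\alpha/2)\le\tfrac12$, gives \eqref{m1swap2}; the bound \eqref{m2swap2} follows by the identical argument with $(N_1,S^{(1)})$ replaced by $(N_2,S^{(2)})$. The main obstacle throughout is the middle step, i.e.\ verifying $y_a\eqdist N_1^{-1/\alpha}d_a(N_1)$ and the ensuing identification $\hat Z^\circ_N\eqdist Z^\circ_{N_1}(\mathcal N,N_1)$; the outer steps require only checking that all error exponents are $<1$, which holds because $\alpha<2$ and $\eps$ is taken small.
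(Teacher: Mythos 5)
Your proposal is correct and follows essentially the same route as the paper: Jensen's inequality (Lemma~\ref{lem:jensen}) to discard $\hat p^\circ$ at cost $CN^{1+\eps(\alpha-2)}$, the tail computation showing $y_a\eqdist N_1^{-1/\alpha}d_a(N_1)$ so that $\hat Z^\circ_N$ is the intrinsic $N_1$-vertex model with a $\Bi(S^{(1)},p^{(1)})$ edge count of mean $S_{N_1}+O(1)$, and then the edge-coupling/Chernoff/H\"older argument of Lemma~\ref{l:newapprox} to replace the binomial count by $S_{N_1}$. You in fact spell out the last step, which the paper only cites as ``nearly identical'' to the proof of \eqref{m1swap}, and your exponent bookkeeping matches.
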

\begin{proof}
We prove only \eqref{m1swap2}, since the proof of \eqref{m2swap2} is similar. 
As a first step towards \eqref{m1swap2}, we claim 
\begin{equation}\label{e:err2}
\Big| \E \big[\log Z_{N_1} (S^{(1)} , N)\big] - \E\big[ \log \hat Z^\circ_N \big] \Big| \le C N^{1+ \epsilon(\alpha -2 )},
\end{equation} 
where we recall that $\hat Z^\circ_N$ was defined after \eqref{hatHcirc2}. 
To this end, note that  $\E\langle\hat p^\circ\rangle = 0$ by integrating first in the $x_a$ variables, and that
\[
\E_x [e^{\hat p^\circ(\sigma)}] = \E_x \left[ \prod_{ 1 \le a \le S^{(1)} } \exp\big(  (1 - L^\circ_a) x_a \big)\right]
\le \exp(S^{(1)} N^{-2\eps}  ) \le \exp(3
 N^{-2\eps} S_N) \le 
 \exp(C N^{1 + \eps(\alpha-2)}),
\]
where we used the independence of the $(1 - L_a) x_a$ variables for the equality, and \Cref{l:expmom2} for the first inequality.
Thus by Lemma~\ref{lem:jensen} with $x=\widehat H^\circ$ and $y=\hat p^\circ$
as in \eqref{hatHcirc}, we obtain \eqref{e:err2}. %To this end, note that  $\E\langle\hat p^\circ\rangle = 0$ by integrating first in $x$.

Next, we claim that
\begin{equation}\label{e:err3}
\Big|  \E\big[ \log Z^\circ_{N_1} ( S_{N_1} ,N_1 )\big] - \E\big[ \log \hat Z^\circ_N \big] \Big| \le C N^{1+ \epsilon(\alpha -2 )}.
\end{equation} 
Together with \eqref{e:err2}, the previous equation implies the desired conclusion \eqref{m1swap2}.%, as discussed previously. 

To prove \eqref{e:err3},
we begin by identifying the distribution of $N_1^{-1/\alpha} d_a(N_1)$, the (rescaled) coupling distribution for the Hamiltonian $H^\circ _{S_{N_1}, N_1 , N_1}$.
%Let $X_a = N^{-1/\alpha} d_a(N)$ denote the (rescaled) coupling distribution for the Hamiltonian $H^\circ _{S^{(1)}, N_1 , N }$.
Using the definition of $d_a$ from \eqref{e:defd}, we have 
\[
\P\Big(\big|N_1^{-1/\alpha} d_a(N_1)\big| > t \Big)   =  \frac{1}{N_1^{\alpha\eps} t^\alpha},
\]
for $t \ge N_1^{-\eps}$.
%Let $\{Y_a\}_{a=1}^N$ be independent, identically distributed random variables such that the distribution of $Y_a$ is equal to the the conditional distribution of $X_a$ after conditioning on the event  $\{|X_a| > N_1^{-\eps} \}$. We have
%\begin{equation}\label{e:p1}
%\P\big(|X_a| > N_1^{-\eps}\big)  = \frac{1}{N^{\alpha\eps} N_1^{-\alpha\eps}} \frac{U(N^{1/\alpha} N_1^{-\eps})}{U(N^{1/\alpha -\eps})} = \frac{1}{N^{\alpha\eps} N_1^{-\alpha\eps}},
%\end{equation}
Similarly, we obtain $\P\big(|y_a| > t \big)  = 1/(N_1^{\alpha\eps} t^\alpha)$
for $t \ge N_1^{-\eps}$. Hence, the variables $N_1^{-1/\alpha} d_a(N_1)$ and $y_a$ are identically distributed, and we have the distributional equality
\begin{equation}\label{disteq}
H^\circ _{S_{N_1}, N_1 , N_1} \eqdist \sum_{1 \le a \le S_{N_1}} y_a \sigma_{c_a(1)} \sigma_{c_a(2)},
\end{equation}
where we recall that the $c_a$ variables are sampled uniformly from the set $\{ ( i,j) \}_{1 \le i \le j \le N_1}$.

Now note that the definition $\hat H^\circ$ in \eqref{lemon} differs from \eqref{disteq} only in the number of nonzero couplings $y_a$ (given by the indices $a$ such that $L_a=1$). 
There are $S^{(1)}$ nonzero couplings in $H^\circ _{S^{(1)}, N_1 , N }$, 
and the number nonzero couplings in $\hat H^\circ$ is binomial with $S^{(1)}$ trials and success probability $p^{(1)}$. The expectation of this distribution is 
\begin{equation}\label{e:binmean}
\frac{N_1^{\alpha\eps} }{N^{\alpha\eps}} S^{(1)} =  C_0 \cdot \frac{N_1^{1+\alpha\eps} }{2} + O(1)
=
 S_{N_1} + O(1).
%= \frac{U(N^{1/\alpha} N_1^{-\eps}) }{U(N_1^{1/\alpha - \eps})}p_{N_1} N_1^2
  \end{equation}
%edges with coupling greater than $N_1^{-\eps}$ in absolute value. 
Then an argument nearly identical to the one that proved \eqref{m1swap} shows \eqref{e:err3}. This completes the proof.
\end{proof}

\begin{proposition}\label{p:2}
There exists $c> 0$ such that the following holds for all $\eps \in (0,c)$. We have 
$\lim_{N\rightarrow \infty} F^\circ_{S,N,N} = L$
for some $L$ satisfying $ -\infty < L \le \infty$. 
\end{proposition}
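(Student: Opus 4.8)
The plan is to read off Proposition~\ref{p:2} as the combination of the super-additivity estimate Lemma~\ref{l:main}, the two comparison estimates Lemmas~\ref{l:newapprox} and \ref{l:newapprox2}, and the sub-additivity criterion Lemma~\ref{l:subadd}. First I would fix $\eps>0$ small enough that the conclusions of all the lemmas proved so far hold, and in addition small enough that $1+\eps(\alpha-2)\in(0,1)$; this is possible since $1/(2-\alpha)>1$ for $\alpha\in(1,2)$. Then I would set $a_N = -\,\E[\log Z^\circ_N(S_N,N)] = -N F^\circ_{S_N,N,N}$, noting that $a_N$ is a finite real number because $Z^\circ_N$ is a finite sum of positive exponentials and $\E|d_a|<\infty$ when $\alpha>1$ (this finiteness was already used implicitly in \eqref{conc2} and \eqref{peach2}).

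The core step is to establish the sub-additivity relation for $a_N$. For $N/3\le N_1\le 2N/3$ and $N_2=N-N_1$, Lemma~\ref{l:main} gives
\[
\E[\log Z^\circ_N(S_N,N)] \ge \E[\log Z^\circ_{N_1}(\mathcal M_1,N)] + \E[\log Z^\circ_{N_2}(\mathcal M_2,N)].
\]
Applying the triangle inequality to \eqref{m1swap}, \eqref{m1swap2} (and \eqref{m2swap}, \eqref{m2swap2}) yields, for $i=1,2$,
\[
\E[\log Z^\circ_{N_i}(\mathcal M_i,N)] \ge \E[\log Z^\circ_{N_i}(S_{N_i},N_i)] - C\big(N^{2/3} + N^{1+\eps(\alpha-2)}\big).
\]
Combining these three displays and translating into the $a_N$ notation produces
\[
a_N \le a_{N_1} + a_{N_2} + \varphi(N), \qquad \varphi(N) := 2C\big(N^{2/3} + N^{1+\eps(\alpha-2)}\big),
\]
valid for all $N/3\le N_1\le 2N/3$.

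Finally I would check that $\varphi$ meets the hypotheses of Lemma~\ref{l:subadd}: it is positive and increasing since both exponents $2/3$ and $1+\eps(\alpha-2)$ are positive, and $\int_1^\infty \varphi(t)\,t^{-2}\,dt<\infty$ because the first term contributes $\int_1^\infty t^{-4/3}\,dt<\infty$ and the second contributes $\int_1^\infty t^{-1+\eps(\alpha-2)}\,dt$, which converges exactly because $\alpha<2$ makes the exponent strictly below $-1$. Lemma~\ref{l:subadd} then gives $a_N/N\to L'$ for some $L'\in[-\infty,\infty)$, hence $F^\circ_{S_N,N,N} = -a_N/N \to -L' =: L$ with $L\in(-\infty,\infty]$, which is the claim.

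The only real care needed — more bookkeeping than obstacle — is tracking which range of the splitting parameter each ingredient lives in (Lemma~\ref{l:main} holds for every $N_1+N_2=N$, but Lemmas~\ref{l:newapprox}, \ref{l:newapprox2}, and \ref{l:subadd} require the balanced range $N/3\le N_1\le 2N/3$), together with the simultaneous check that $1+\eps(\alpha-2)$ lies in $(0,1)$ so that $\varphi$ is increasing while $\varphi(t)/t^2$ remains integrable; both are automatic once $\eps$ is taken small and $\alpha\in(1,2)$.
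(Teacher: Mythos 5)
Your argument is correct and is essentially identical to the paper's proof: both chain Lemma~\ref{l:main} with Lemmas~\ref{l:newapprox} and \ref{l:newapprox2} to obtain the approximate subadditivity $a_N \le a_{N_1}+a_{N_2}+\varphi(N)$ for $N/3\le N_1\le 2N/3$, and then invoke Lemma~\ref{l:subadd} with $\varphi(t)$ of order $t^{1+\eps(\alpha-2)}$ (the paper absorbs your $t^{2/3}$ term into this dominant one). Your extra checks---that $\eps$ is small enough for $1+\eps(\alpha-2)\in(0,1)$ and that $a_N$ is finite---are correct and only make explicit what the paper leaves implicit.
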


\begin{proof}
Let $1\le N_1,N_2\le N$ be integers such that $N/3 \le N_1 \le 2N/3$. 
\Cref{l:main} and \Cref{l:newapprox} together imply that
\[\E \big[\log Z_N^\circ (S,N )  \big] + CN^{2/3} \ge \E \big[\log Z_{N_1}^\circ (S^{(1)},
N) \big] + \E \big[\log Z^\circ_{N_2} ( S^{(2)}, N)\big],
\]
if $\eps$ in chosen small enough (relative to $\alpha$). Then \Cref{l:newapprox2} implies that 
\[
\E \big[\log Z_N^\circ (S , N)  \big] + CN^{1 + \eps(\alpha-2)}\ge \E \big[\log Z^\circ_{N_1} ( S_{N_1} ,N_1 ) \big] + \E \big[\log Z_{N_2}^\circ (S_{N_2},N_2 )\big].
\]
Now set
$a_N =  - \E \big[\log Z_N^\circ (S,N )  \big]$ and $\phi(t) = C t^{1+ \epsilon(\alpha -2 )}$,
and  observe that $\phi(t)/t^2$ is integrable on $[1, \infty)$ since $1 + \eps(\alpha - 2) < 1$. We then apply \Cref{l:subadd}  to conclude.
\end{proof}

\begin{proof}[Proof of \Cref{t:main}]

By combining \Cref{l:l2}, \Cref{l:fixededge}, \Cref{l:multimodel}, and \Cref{p:2}, we find that $ \lim_{N\rightarrow \infty} F_N$ exists and 
$\lim_{N\rightarrow \infty} F_N >  -\infty.$
It remains to show that this limit does not equal $+\infty$. To accomplish this, we will show that $F_N$ is uniformly bounded. We consider the Hamiltonian $\hat H_\star(\sigma)$ defined by
\[
\widehat H_\star(\sigma) = \frac{1}{N^{1/\alpha}} \sum J_{ij} \one_{| J_{ij} | \ge  R_\star }\sigma_i \sigma_j,\quad  \hat p_\star(\sigma) = \frac{1}{N^{1/\alpha}} \sum J_{ij}  \one_{| J_{ij} | <  R_\star }\sigma_i \sigma_j,\quad R_\star = N^{1/\alpha}.
\]
We note that
$H(\sigma) = \widehat H_\star(\sigma)  + \hat p_\star(\sigma),$
and define $Z_{N,\star}$ and $F_{N,\star}$ by analogy with \eqref{e:perturb}.

The second inequality in Lemma~\ref{lem:jensen} yields % \eqref{e:perturbation} with the choice $\eps=0$ yields (since $R_\star$ equals $R$ with the choice $\eps =0$)
\begin{equation}\label{e:perturbation11}
\E [ \log Z_{N,\star} ]  = \E \big[\log \textstyle\sum_\sigma e^{\widehat H_\star(\sigma)  + \hat p_\star(\sigma) } \big] \le \E \big[\log \sum_\sigma e^{\widehat H_\star(\sigma) }\big] + O(N).
\end{equation}

Therefore, it suffices to bound the expectation on the right side of \eqref{e:perturbation11}.  
\begin{comment}The number of nonzero couplings $G_{ij} = J_{ij} \one_{|J_{ij}| > R_\star}$ in the Hamiltonian $H_\star$ which are nonzero is distributed as $\mathcal R = \Bi( N(N+1)/2, t_N)$,
with 
\begin{equation}
t_N  = \P( |J| > N^{-1/\alpha}) \le C N^{-1},
\end{equation}
by \eqref{e:arep}.
\end{comment}
Define the Hamiltonian $H_0$ by $H_0(\sigma) =0$. Its associated partition function is $Z_0=2^N$. Then removing all the nonzero couplings of $H_\star$ using \eqref{e:edgeremoval} with $\eps = 0$ gives
\begin{equation}\label{e:edgeremoval2}
\big| \log  Z_0  - \log Z_{N,\star} \big| \le N^{-1/\alpha}\sum_{1\le i<j \le N} |J_{ij}| \one_{|J_{ij}| > R_\star},
\end{equation}
which implies
\begin{equation}\label{e:edgeremoval3}
\Big| \E \big[ \log  Z_0\big]  - \E \big[\log Z_{\star}\big] \Big| \le 
C N^{2 - 1/\alpha} \cdot \E\Big[ |J_{ij}| \one_{|J_{ij}| > R_\star} \Big] = O(N),
\end{equation}
where we used \eqref{e:arep} to compute 
$\E\Big[ |J_{ij}| \one_{|J_{ij}| > R_\star} \Big] \le N^{-1 + 1/\alpha}.$
Since $|\log Z_0 |\le CN $, equation \eqref{e:edgeremoval3} implies 
$N^{-1} \E \big[\log Z_{\star,N}\big] \le C.$
Combining this bound with \eqref{e:perturbation11} completes the proof.
\end{proof}
\begin{comment}
\begin{proof}[Proof of \Cref{c:linear}]
For brevity, we set $G_N = \max_{\sigma \in \Sigma_N} H(\sigma)$. Then the definition of $Z_N$ implies that 
\begin{equation}
G_N \le \log (Z_N) \le N \log 2 + G_N,
\end{equation}
and hence
\begin{equation}
0\le  N^{-1} \E \big[ \log (Z_N)\big] - N^{-1} \E[G_N]  \le  \log 2. 
\end{equation}
\end{proof}
\end{comment}
\section{Interpolation}\label{s:lemma4}
In this section, we prove \Cref{l:main}.

\subsection{Proof of \Cref{l:main}}
Recall the notation of \Cref{s:multiedge}. Given integers $v, u >0$,  define $\G(v, u)$ to be the random  multi-graph on the vertex set $\unn{1}{v}$ with edge set $\{ c_a \}_{1 \le a \le u}$. %To each edge $c_a$,  $N^{-1/\alpha} d_a (N)$.
We will construct a sequence of multi-graphs interpolating between $\G(N, S_N)$ and the disjoint union of $\G(N_1, \mathcal M_2)$ and $\G(N_2, \mathcal M_2)$. 

Given $N_1, N_2$ such that $N_1 + N_2$ and an integer $r$ such that $0\le r \le N$, we define $\G_r$ as follows. Let $\chi$ be a Bernoulli random value that takes the value $1$ with probability $N_1/N$, and is $0$ otherwise, and let $\{ \chi_a \}_{1 \le a \le N}$ be a collection of independent random variables distributed as $\chi$. Let $\{c^{(1)}_a \}_{1 \le a \le S}$ be independent edges chosen uniformly at random from the set $\{(i,j) \}_{1 \le i \le j \le N_1}$, and define $\{c^{(2)}_a \}_{1 \le a \le S}$ similarly for $\{(i,j) \}_{N_1 + 1 \le i \le j \le N_2}$. We define the random variables $\{c^{(-)}_a \}_{1 \le a \le S}$ by letting $c^{(-)}_a = c^{(1)}_a $ if $\chi =1$, and $c^{(-)}_a = c^{(2)}_a $  if $\chi = 0$.
%\begin{equation}
%c^{(-)}_a  = \chi c^{(1)}_a + ( 1 - \chi)c^{(2)}_a.
%\end{equation}
\begin{comment}
In words, $c^{(-)}$ first samples from a Bernoulli random variable with parameter $N_1/N$. If the result is $1$, $e_j$ is sampled uniformly at random from all possible edges between the vertices labeled $1$ to $N_1$. If the result is $0$, $e_j$ is sampled uniformly at random from all possible edges between the $N_2$ total vertices labeled $N_1 + 1$ to $N$. 
\end{comment}
%The weight for $c^{(+)}_a$ is again given by $N^{-1/\alpha} d_a(N)$. 
The graph $\G_r$ is then defined for $0\le r \le S_N$ by the random edge set $\{ c_a \}_{1 \le a \le r} \cup \{ c^{(-)}_a \}_{r+1 \le a \le S}$. We see that the graphs $\G_r$ interpolate between $\G(N, S)$ when $r=S_N$ and the disjoint union of $\G(N_1, \mathcal M_2)$ and $\G(N_2, \mathcal M_2)$ when $r=0$.
\begin{comment}
The graph $\G_r$ may contain loops and multiple edges. The first $r$ edges $e_1, \dots , e_r$ are chosen uniformly at random from all possible edges $\{(i,j) : 1 \le i ,j \le N\}$. The remaining edges are sampled using a two-step procedure. First sample from a Bernoulli random variables with parameter $N_1/N$. If the result is $1$, $e_j$ is sampled uniformly at random from all possible edges between the vertices labeled $1$ to $N_1$. If the result is $0$, $e_j$ is sampled uniformly at random from all possible edges between the $N_2$ total vertices labeled $N_1 + 1$ to $N$. The couplings for the edges are chosen as before, from $N^{-1/\alpha} J$ conditional on $|J| > N^{1/\alpha - \epsilon}$. Checking the cases $r=S$ and $r=0$, we see this gives the claimed interpolation.
\ber define in terms of $c^{(r)}_a$.\eer
\end{comment}

We define a Hamiltonian and partition function corresponding to $\G_r$ by 
\[
H^{(r)}(\sigma) = N^{-1/\alpha} \sum_{1 \le a \le r} d_a(N) \sigma_{c_a(i)} \sigma_{c_a(j)} + N^{-1/\alpha} \sum_{r+1 \le a \le S_N} d_a(N) \sigma_{c_a^{(-)}(1)} \sigma_{c_a^{(-)}(2)}
\]
and
$Z^{(r)} = Z^{(r)}_N = \sum_{\sigma\in \Sigma_N} \exp\big( H^{(r)}(\sigma) \big).$
We also define the graph $\widehat \G_r$ using the random edge set  $\{ c_a \}_{1 \le a \le r-1} \cup \{ c^{(-)}_a \}_{r+1 \le a \le S}$, which omits the $r$-th edge. 
The corresponding Hamiltonian and partition function are defined by
\begin{align*}
H^{(r, -)}(\sigma) &= N^{-1/\alpha} \sum_{1 \le a \le r-1} d_a(N) \sigma_{c_a(i)} \sigma_{c_a(j)} + N^{-1/\alpha} \sum_{r+1 \le a \le S_N} d_a(N) \sigma_{c_a^{(-)}(1)} \sigma_{c_a^{(-)}(2)}, \\
Z^{(r,-)} &= Z^{(r,-)}_N = \sum_{\sigma\in \Sigma_N} \exp\big( H^{(r,-)}(\sigma) \big).
\end{align*}

\begin{lemma}\label{l:interpolate} For every $1\le r  \le S_N$, 
\begin{equation}\label{interpolationsuffices}
\E \big[\log Z^{(r)} \big]\ge \E 
\big[\log Z^{(r-1)}\big]. \end{equation}
\end{lemma}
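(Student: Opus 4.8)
The plan is to compare $Z^{(r)}$ and $Z^{(r-1)}$ by examining the single edge in which the two graphs $\G_r$ and $\G_{r-1}$ differ, and to show that replacing the ``mixed-block'' edge $c^{(-)}_r$ by the ``full'' edge $c_r$ can only increase the expected log partition function. Both $Z^{(r)}$ and $Z^{(r-1)}$ share the common graph $\widehat\G_r$ with Hamiltonian $H^{(r,-)}$; write $\langle \cdot \rangle$ for the Gibbs expectation associated to $H^{(r,-)}$, and let $w = N^{-1/\alpha} d_r(N)$ be the (random) weight attached to the $r$-th edge. Then
\[
\E\big[\log Z^{(r)}\big] - \E\big[\log Z^{(r-1)}\big]
= \E\Big[\log \big\langle e^{\, w\, \sigma_{c_r(1)}\sigma_{c_r(2)}} \big\rangle\Big]
- \E\Big[\log \big\langle e^{\, w\, \sigma_{c^{(-)}_r(1)}\sigma_{c^{(-)}_r(2)}} \big\rangle\Big],
\]
using independence of $(c_r, c^{(-)}_r, d_r)$ from everything defining $H^{(r,-)}$.

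The key observation is that conditionally on $\widehat\G_r$, on all weights, and on $w$, the quantity $\E_{c_r}\big[\log\langle e^{w\sigma_i\sigma_j}\rangle\big]$ (expectation over the uniform choice of $c_r=(i,j)$ among $1\le i\le j\le N$) dominates the analogous average $\E_{c^{(-)}_r}\big[\log\langle e^{w\sigma_i\sigma_j}\rangle\big]$, because the latter averages the \emph{same} summands $g_{ij} := \log\langle e^{w\sigma_i\sigma_j}\rangle$ over a measure on pairs that is obtained from the uniform one by a convexity/mixing step. Concretely, sampling $c^{(-)}_r$ amounts to: pick the block $\{1,\dots,N_1\}$ with probability $N_1/N$ or $\{N_1+1,\dots,N\}$ with probability $N_2/N$, then pick a uniform within-block pair; whereas sampling $c_r$ picks a uniform pair among all of $\{1,\dots,N\}$. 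The plan is to exhibit $g_{ij}$ as $\phi(\langle \sigma_i\sigma_j\rangle)$ for a fixed convex function $\phi$ (indeed $\log\langle e^{w\sigma_i\sigma_j}\rangle = \log(\cosh w + \langle\sigma_i\sigma_j\rangle\sinh w)$ after writing $e^{w\sigma_i\sigma_j}=\cosh w + \sigma_i\sigma_j \sinh w$, and $x\mapsto \log(\cosh w + x\sinh w)$ is concave in $x$ — so I will instead track the right direction carefully), and then apply a Jensen-type comparison after noting that the uniform-over-all-pairs average of $\langle\sigma_i\sigma_j\rangle$ is a convex combination that ``spreads'' the within-block averages. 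This is exactly the step where the combinatorial interpolation of Bayati–Gamarnik–Tetali \cite{bayati2010combinatorial} is used, and the inequality direction comes out in our favor because adding cross-block edges in the full model only helps relative to the decoupled model.

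Carrying this out, the steps in order are: (1) condition on $H^{(r,-)}$ and $w$ and reduce to the deterministic inequality $\E_{c_r}[g_{c_r}] \ge \E_{c^{(-)}_r}[g_{c^{(-)}_r}]$ for arbitrary arrays $(g_{ij})$ of the special form above; (2) expand $g_{ij}$ using $e^{w\sigma_i\sigma_j} = \cosh w(1 + \sigma_i\sigma_j \tanh w)$, so $g_{ij} = \log\cosh w + \log(1 + m_{ij}\tanh w)$ with $m_{ij}=\langle\sigma_i\sigma_j\rangle \in [-1,1]$; (3) observe that the $\log\cosh w$ term cancels between the two averages, and compare $\E\log(1+m_{c_r}\tanh w)$ to $\E\log(1+m_{c^{(-)}_r}\tanh w)$ using that $c_r$ is a mixture whose first marginal draws a uniform vertex in $\unn{1}{N}$ independently, versus $c^{(-)}_r$ which forces both endpoints into the same block, and apply concavity of $x\mapsto \log(1+x\tanh w)$ together with the fact that the within-block overlap means are at least as concentrated; (4) take expectations over $w$ and over $H^{(r,-)}$ to conclude \eqref{interpolationsuffices}. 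The main obstacle will be step (3): making the convexity comparison rigorous requires identifying the correct probabilistic coupling between a uniform pair in $\unn{1}{N}$ and the block-mixture pair, and verifying that the relevant one-dimensional function has the sign of concavity/convexity that makes the inequality point the right way — this is the technical heart of the Bayati–Gamarnik–Tetali argument adapted to our multi-edge, heavy-tailed-weight setting, and I expect it to require care with the $i\le j$ (loops-allowed) bookkeeping.
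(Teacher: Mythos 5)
Your setup matches the paper's: condition on the common graph $\widehat\G_r$ and the weight $d_r$, and reduce \eqref{interpolationsuffices} to comparing the average over the uniform edge $c_r$ with the average over the block-mixture edge $c_r^{(-)}$ of the same function $\log\langle e^{w\sigma_i\sigma_j}\rangle$. But your step (3) is exactly where the proof lives, and the route you sketch there does not close. Writing $\log\langle e^{w\sigma_i\sigma_j}\rangle = \log\cosh w + \log(1+m_{ij}\tanh w)$ with $m_{ij}=\langle\sigma_i\sigma_j\rangle\in[-1,1]$ and then invoking concavity of $x\mapsto\log(1+x\tanh w)$ cannot work: as you yourself note, the concavity points the wrong way for a Jensen comparison, and moreover $m_{ij}\tanh w$ has no definite sign, so a Taylor expansion in this variable has coefficients of alternating sign and admits no termwise comparison. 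There is also no general ``spreading'' inequality between the uniform-pair average and the block-mixture average of a concave function of the overlaps; the needed inequality is specific to the moment structure described below, and you have not identified it.

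The paper's resolution, which is the actual content of Lemma~\ref{l:interpolate}, has three ingredients absent from your plan. First, one works not with $m_{ij}$ but with $\mu(\sigma_x=\sigma_y)\in[0,1]$ (when $d_r<0$) or $\mu(\sigma_x\neq\sigma_y)\in[0,1]$ (when $d_r>0$), so that the logarithm $\log\bigl(1-(1-e^{-2|d_r|})\mu(\cdot)\bigr)$ Taylor-expands with all coefficients of one sign. Second, the $k$-th moment $\mu(\cdot)^k$ is written via $k$ replicas, and the average over the uniform edge of $\one\{\sigma^\ell_x=\sigma^\ell_y\ \forall\ell\}$ becomes $\sum_s(|O_s|/N)^2$, a sum of squares over the equivalence classes $O_s$ of vertices agreeing across all replicas; the block-mixture version is the corresponding $\frac{N_1}{N}(\cdot)^2+\frac{N_2}{N}(\cdot)^2$ expression, and convexity of $x\mapsto x^2$ then gives the comparison in the correct direction. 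Third, and most subtly, the $d_r>0$ case produces cross terms $\sum_s|O_s|\,|O_{r(s)}|/N^2$ for which the convexity comparison fails on its own; one must pair $d_r=+x$ with $d_r=-x$ using the symmetry of the weight distribution, whereupon the square and cross terms combine into $\bigl(|O_s|/N+|O_{r(s)}|/N\bigr)^2$ and the block-versus-full comparison again reduces to convexity of the square. Without these three steps — the one-signed expansion, the replica/equivalence-class identity, and the symmetric pairing of the two signs of $d_r$ — the key inequality is not established, so the proposal has a genuine gap at its technical heart.
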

\begin{proof}
%Fix $r$. The graph $\G_{r-1}$ is obtained from $\G_r$ by deleting an edge chosen uniformly at random from $e_1$ to $e_r$ and then adding an edge using the two-step sampling procedure. Let $\widehat \G_r$ be the graph after deleting but before adding the edge. 
It suffices to show that
\begin{equation}\label{interpolationsuffices2}
\E\big[  \log Z^{(r)} \;\big|\; \widehat \G_r \big]  
\ge  \E\big[  \log Z^{(r-1)} \; \big| \; \widehat \G_r  \big],
\end{equation}
where the notation in the previous inequality denotes the conditional expectation over the edges and weights of $\widehat \G_r$. The remaining randomness is in the choice of edge $c_r$ (or $c^{(-)}_r$) and the weight $d_r$. We write $x = c_r (1)$ and $y = c_r(2)$. 

We compute
\begin{align}
&\E\big[  \log Z^{(r)}  \;|\: \widehat \G_r \big] - \E \big[ \log Z^{(r,-)} \big]\nonumber\\% &= \E\left[ \log   \frac{Z^{(r)}  }{Z^{(r, -)}  } \;\Bigg|\; \widehat \G_r \right] \\
&= \E\left[ \log \frac{ e^{-d_r} \sum_\sigma  \one_{\sigma_x \neq \sigma_y} \exp\big(H^{(r,-)}(\sigma)\big) + e^{d_r} \sum_\sigma \one_{\sigma_x = \sigma_y} \exp\big(H^{(r,-)}(\sigma)\big)}{\sum_\sigma \exp\big(H^{(r,-)}(\sigma)\big)} \;\Bigg|\; \widehat \G_r \right].
\label{e:continue}
\end{align}
The same expression holds with $Z^{(r)}$ replaced by $Z^{(r-1)}$, and $x$ and $y$ replaced by $x^{(-)}=c_r^{(-1)}(i)$ and $y^{(-)}=c_r^{(-)}(j)$, respectively.
In the following two cases, we will compute both of these expressions, after conditioning on  $d_r$. The computations will differ depending on the sign of $d_r$. \\
%We condition on the sign of $d_r$ and drop this conditioning from the notation.\\
\paragraph{\bf Case I: $d_r<0$.} Let $\mu$ denote the Gibbs measure for the Hamiltonian $H^{(r,-)}(\sigma)$. Using \eqref{e:continue}, we have
\begin{align*}
&\E[  \log Z^{(r)}  \;|\: \widehat \G_r, d_r   ] - \E [ \log Z^{(r,-)}  \;|\:  d_r ]\\ 
&= - d_r  +  \E\left[ \log \frac{ \sum_\sigma  \one_{\sigma_x \neq \sigma_y} \exp\big(H^{(r,-)}(\sigma)\big) + e^{2d_r} \sum_\sigma \one_{\sigma_x= \sigma_y} \exp\big(H^{(r,-)}(\sigma)\big)}{\sum_\sigma \exp\big(H^{(r,-)}(\sigma)\big)} \;\Bigg|\; \widehat \G_r \right] \\ 
%&= - d_r + \E\left[ \log\left( 1 + (e^{2d_r} -1) \mu(\sigma_x = \sigma_y)\right) \;\Bigg|\;\widehat \G_r, d_r  \right]  \\ 
&= - d_r + \E\left[ \log\left( 1 - (1 - e^{2d_r}) \mu(\sigma_x= \sigma_y )\right) \;\Bigg|\; \widehat \G_r , d_r\right] .
\end{align*}
Observe that $0 < (1 - e^{2d_r}) \mu(\sigma_x = \sigma_y )< 1$ because $d_r< 0$, so it is permissible to Taylor expand the logarithm. Therefore, introducing replicas $\sigma^\ell$, we have 
\begin{align}%\label{514}
&\E\big[  \log Z^{(r)}  \;|\: \widehat \G_r, d_r  \big] - \E \big[ \log Z^{(r,-)}  \;|\:  d_r\big]  + d_r \nonumber \\
&= - \sum_{k=1}^\infty  \E \left[\frac{(1 - e^{2d_r})^k \mu(\sigma_x = \sigma_y)^k}{k } \;\Bigg|\; \widehat \G_r, d_r \right]\nonumber\\ 
&=  - \sum_{k=1}^\infty \frac{(1 - e^{2d_r})^k}{k}  \E\left[  \sum_{\sigma^1, \dots, \sigma^k}  \frac{ \exp\left( \sum_{\ell=1}^k H^{(r,-)}(\sigma^\ell)  \right)}{(Z^{(r,-)})^k} \one_{ \{\sigma^\ell_x = \sigma^\ell_y ,\forall \ell \}} \;\Bigg|\; \widehat \G_r, d_r \right]\nonumber\\ 
&=  - \sum_{k=1}^\infty \frac{(1 - e^{2d_r})^k}{k}  \sum_{\sigma^1, \dots, \sigma^k}  \frac{ \exp\left( \sum_{\ell=1}^k H^{(r,-)}(\sigma^\ell)  \right)}{(Z^{(r,-)})^k}  \cdot \E\left[  \one_{ \{\sigma^\ell_x = \sigma^\ell_y ,\forall \ell \}} \right].
\label{e:prev}
\end{align}

For every set of replicas $\boldsymbol{\sigma} = (\sigma^1,\dots \sigma^k)$, we introduce the following equivalence relation on $\unn{1}{N}$. For $i,j \in \unn{1}{N}$, we say that $i \sim j$ if $\sigma^\ell_i = \sigma^\ell_j$ for all replicas $\ell = 1, \dots k$. Denote the number of equivalence classes induced by $\sim$ by $J$, and let $\{O_s\}_{s=1}^J = \{O_s(\boldsymbol{\sigma})\}_{s=1}^J$ be the set of these equivalence classes. Recalling the definition of $x$ and $y$, we compute 
$\E[  \one_{ \{\sigma^\ell_x = \sigma^\ell_y ,\forall \ell \}}  ] =  \sum_{s=1}^J ( |O_s|/N)^2.$
Then, recalling \eqref{e:prev}, we have
\begin{align}
&\E\big[  \log Z^{(r)}  \;|\: \widehat \G_r, d_r  \big] - \E \big[ \log Z^{(r,-)}  \;|\:  d_r\big]  +  d_r\nonumber\\ &= - \sum_{k=1}^\infty \frac{(1 - e^{2d_r})^k}{k}  \sum_{\sigma^1, \dots, \sigma^k}  \frac{ \exp\left( \sum_{\ell=1}^k H^{(r,-)}(\sigma^\ell)  \right)}{(Z^{(r,-)})^k} \sum_{s=1}^J \left( \frac{|O_s|}{N}\right)^2.\label{mango1}
\end{align}

The computation for $\E\big[  \log Z^{(r-1)}  \;|\: \widehat \G_r, d_r  \big] - \E \big[ \log Z^{(r,-)}  \;|\:  d_r\big]$ is analogous, and we now outline the main steps. Note that in $\G_{r-1}$, the $r$-th edge is added using the two-step sampling procedure described at the beginning of this proof, where first the value $\chi_r$ is sampled, and then $c_r^{(-)}$ is sampled from either $c_r^{(1)}$ or $c_r^{(2)}$, depending on the value of $\chi_r$. Recall that $\big(x^{(-)},y^{(-)}\big)$ denotes the random edge $c^{(-)}_r$. We find
\begin{equation}
\E[  \one_{ \{\sigma^\ell_{x^{(-)}} = \sigma^\ell_{y^{(-)}} ,\forall \ell \}}]=
\sum_{s=1}^J \left( \frac{N_1}{N} \left(\frac{| O_s \cap \unn{1}{N_1}|}{N_1}\right)^2 + \frac{N_2}{N}\left( \frac{| O_s \cap \unn{1}{N_2}|}{N_2}\right)^2 \right).
\end{equation} 
Then the analogue of \eqref{e:prev} holds for $\E\big[  \log Z^{(r-1)}  \;|\: \widehat \G_r, d_r  \big] - \E \big[ \log Z^{(r,-)}  \;|\:  d_r\big]$, with $x$ and $y$ replaced by $x^{(-)}$ and $y^{(-)}$, respectively, and we conclude that
\begin{multline}
\E[  \log Z^{(r-1)}  \;|\: \widehat \G_r, d_r  ] - \E [ \log Z^{(r,-)}  \;|\:  d_r]  + d_r= \\
- \sum_{k=1}^\infty \frac{(1 - e^{2d_r})^k}{k}  \sum_{\sigma^1, \dots, \sigma^k}  \frac{ \exp\left( \sum_{\ell=1}^k H^{(r,-)}(\sigma^\ell)  \right)}{(Z^{(r,-)})^k} \\ \times \sum_{s=1}^J \left( \frac{N_1}{N} \left(\frac{| O_s \cap \unn{1}{N_1}|}{N_1}\right)^2 + \frac{N_2}{N}\left( \frac{| O_s \cap \unn{1}{N_2}|}{N_2}\right)^2 \right) .\label{mango2}
\end{multline}
% The sum $\G_r$ is greater than that for $\G_{r-1}$ after using the convexity of $\sigma^2$ to compare the sums term by term.

\paragraph{\bf Case II: $d_r> 0$.} We proceed as in the previous case to obtain
\begin{align*}
\E[  \log Z^{(r)}  \;|\: \widehat \G_r, d_r  ] - \E [ \log Z^{(r,-)}  \;|\:  d_r]  + d_r 
%&=    \E\left[ \log \frac{ e^{- 2d_r} \sum_\sigma  \one_{\sigma_x \neq \sigma_y} \exp\big(H^{(r,-)}(\sigma)\big) + \sum_\sigma \one_{\sigma_x = \sigma_y} \exp\big(H^{(r,-)}(\sigma)\big)}{\sum_\sigma \exp\big(H^{(r,-)}(\sigma)\big)} \;\Bigg|\; \widehat \G_r, d_r \right] \\ 
=   \E\Big[ \log\big( 1  - (1 - e^{-2d_r} ) \mu(\sigma_x \neq \sigma_y )\big)\;|\; \widehat \G_r, d_r \Big].
\end{align*}
Taylor expanding the logarithm, we obtain as before that
\begin{align}
&\E\big[  \log Z^{(r)}  \;|\: \widehat \G_r, d_r  \big] - \E \big[ \log Z^{(r,-)}  \;|\:  d_r\big]  + d_r\nonumber\\ 
%&= - \sum_{k=1}^\infty  \E \left[\frac{(1 - e^{-2d_r})^k \mu(\sigma_x \neq \sigma_y)^k}{k } \;\bigg|\; \widehat \G_r , d_r \right]
%\\ &=  - \sum_{k=1}^\infty \frac{(1 - e^{-2d_r})^k}{k}  \E\left[  \sum_{\sigma^1, \dots, \sigma^k}  \frac{ \exp\left( \sum_{\ell=1}^k H^{(r,-)}(\sigma^\ell)  \right)}{(Z^{(r,-)})^k} \one_{ \{\sigma^\ell_x \neq \sigma^\ell_y ,\forall \ell \}} \;\Bigg|\; \widehat \G_r, d_r \right]\\ 
&=  - \sum_{k=1}^\infty \frac{(1 - e^{-2d_r})^k}{k}  \sum_{\sigma^1, \dots, \sigma^k}  \frac{ \exp( \sum_{\ell=1}^k H^{(r,-)}(\sigma^\ell)  )}{(Z^{(r,-)})^k}  \E[  \one_{ \{\sigma^\ell_x \neq \sigma^\ell_y ,\forall \ell \}} ].\label{mango3}
\end{align}

We now compute the term $\E[  \one_{ \{\sigma^\ell_x \neq \sigma^\ell_y ,\forall \ell \}} ]$. 
We recall the equivalence classes $O_s$ defined in the previous case. For every class $O_s$, there exists an equivalence class $O_r$, for some $r=r(s)$, of vertices such that $\sigma_i^{\ell} \neq \sigma_j^{\ell}$ for all $\ell$ if $i \in O_s$ and $j \in O_r$. This gives a pairing of equivalence classes. Then we have
$\E[  \one_{ \{\sigma^\ell_x \neq \sigma^\ell_y ,\forall \ell \}}  ] = N^{-2} \sum_{s=1}^J |O_s| |O_r|$ which combined with  %\label{mango4}
%Then combining 
\eqref{mango3} %and \eqref{mango4}, we have 
yields
\begin{align}
&\E\big[  \log Z^{(r)}  \;|\: \widehat \G_r, d_r  \big] - \E \big[ \log Z^{(r,-)}  \;|\:  d_r\big]  + d_r\nonumber\\ &=
 - \sum_{k=1}^\infty \frac{(1 - e^{-2d_r})^k}{k}  \sum_{\sigma^1, \dots, \sigma^k}  \frac{ \exp\left( \sum_{\ell=1}^k H^{(r,-)}(\sigma^\ell)  \right)}{(Z^{(r,-)})^k} \sum_{s=1}^J \left( \frac{|O_s|}{N}\right) \left( \frac{|O_r|}{N}\right).\label{mango6}
 \end{align}
Similarly, we compute
\begin{align*}
\E[  \one_{ \{\sigma^\ell_{x^{(-)}} = \sigma^\ell_{y^{(-)}} ,\forall \ell \}} ] &= \frac{N_1}{N}  \sum_{s=1}^J   \frac{| O_s \cap \unn{1}{N_1}|}{N_1}\cdot\frac{| O_r \cap \unn{1}{N_1}|}{N_1}
+ \frac{N_2}{N}\sum_{s=1}^J  \frac{| O_s \cap \unn{1}{N_2}|}{N_2} \cdot \frac{| O_r \cap \unn{1}{N_2}|}{N_2},\label{mango5}
\end{align*}
leading to
\begin{equation}
\begin{aligned}\label{mango7}
&\E\big[  \log Z^{(r-1)}  \;|\: \widehat \G_r, d_r  \big] - \E \big[ \log Z^{(r,-)}  \;|\:  d_r\big]  + d_r\\ &=
 - \sum_{k=1}^\infty \frac{(1 - e^{-2d_r})^k}{k}  \sum_{\sigma^1, \dots, \sigma^k}  \frac{ \exp\left( \sum_{\ell=1}^k H^{(r,-)}(\sigma^\ell)  \right)}{(Z^{(r,-)})^k}\\
 &\times 
 \Big(
 \frac{N_1}{N}  \sum_{s=1}^J   \frac{| O_s \cap \unn{1}{N_1}|}{N_1}\cdot\frac{| O_r \cap \unn{1}{N_1}|}{N_1}+ \frac{N_2}{N}\sum_{s=1}^J  \frac{| O_s \cap \unn{1}{N_2}|}{N_2}\cdot \frac{| O_r \cap \unn{1}{N_2}|}{N_2}\Big).
\end{aligned}
\end{equation}
\\

\paragraph{\bf Conclusion.} %Fix some weight $\hat w >0$. We choose the coupling to be $\hat d_r$ or $-\hat d_r$ with equal probability.

 Observe that $1 - e^{ 2 ( - x)} = 1 - e^{-2x}$, so the powers $(1 - e^{2d_r})^k$ in the Taylor expansions in above two cases are the same if $|d_r| =x$ in each case. Further, observe that the density of $d_r$ is symmetric, by definition. 
We now subtract \eqref{mango2} from \eqref{mango1},  subtract \eqref{mango7} from \eqref{mango6}, and take expectation over $d_r$ and $\widehat{\mathbb{G}}_r$ in each expression.
The upshot of this computation is that to establish \eqref{interpolationsuffices2}, it suffices to prove for a fixed replica $\boldsymbol{\sigma}$ that
\begin{align*}
&\sum_{s=1}^J \left( \frac{N_1}{N} \left(\frac{| O_s \cap \unn{1}{N_1}|}{N_1}\right)^2 + \frac{N_2}{N}\left( \frac{| O_s \cap \unn{1}{N_2}|}{N_2}\right)^2 \right) 
\\ &+ \sum_{s=1}^J \left( \frac{N_1}{N} \left(\frac{| O_s \cap \unn{1}{N_1}|}{N_1}\right) \left(\frac{| O_r \cap \unn{1}{N_1}|}{N_1}\right)+ \frac{N_2}{N}\left( \frac{| O_s \cap \unn{1}{N_2}|}{N_2}\right) \left( \frac{| O_r \cap \unn{1}{N_2}|}{N_2}\right)   \right)
\\ &\quad \ge   \sum_{s=1}^J \left( \frac{|O_s|}{N}\right)^2 +  \sum_{s=1}^J \left( \frac{|O_s|}{N}\right) \left( \frac{|O_r|}{N}\right).
\end{align*} 
Fix some replica $s$ and corresponding $r=r(s)$ (as defined in the second case above), and consider just these terms in the sum. It suffices to show that
\begin{align*}
&\left( \frac{N_1}{N} \left(\frac{| O_s \cap \unn{1}{N_1}|}{N_1}\right)^2 + \frac{N_2}{N}\left( \frac{| O_s \cap \unn{1}{N_2}|}{N_2}\right)^2 \right)\\ &+ \left( \frac{N_1}{N} \left(\frac{| O_r \cap \unn{1}{N_1}|}{N_1}\right)^2 + \frac{N_2}{N}\left( \frac{| O_r \cap \unn{1}{N_2}|}{N_2}\right)^2 \right) 
\\ &+ 2  \left( \frac{N_1}{N} \left(\frac{| O_s \cap \unn{1}{N_1}|}{N_1}\right) \left(\frac{| O_r \cap \unn{1}{N_1}|}{N_1}\right)+ \frac{N_2}{N}\left( \frac{| O_s \cap \unn{1}{N_2}|}{N_2}\right) \left( \frac{| O_r \cap \unn{1}{N_2}|}{N_2}\right)   \right)
\\ &\quad \ge   \left( \frac{|O_r|}{N}\right)^2  +  \left( \frac{|O_s|}{N}\right)^2 +  2 \left( \frac{|O_s|}{N}\right) \left( \frac{|O_r|}{N}\right).
\end{align*} 
The right side of the previous inequality factors as 
$\left(  \frac{|O_s|}{N} + \frac{|O_r|}{N}  \right)^2$, 
whereas the left side factors as 
\begin{equation*}%\label{left1}
\frac{N_1}{N}\left( \frac{| O_s \cap \unn{1}{N_1}|}{N_1}  + \frac{| O_r \cap \unn{1}{N_1}|}{N_1} \right)^2 + \frac{N_2}{N}\left( \frac{| O_s \cap \unn{1}{N_2}|}{N_2}  + \frac{| O_r \cap \unn{1}{N_2}|}{N_2} \right)^2.
\end{equation*}
The left side is thus greater than the right side by the convexity of $x \mapsto x^2$. This establishes \eqref{interpolationsuffices} and completes the proof.
\end{proof}

\begin{proof}[Proof of \Cref{l:main}]
We apply \Cref{l:interpolate} in succession for $r=1, \dots , S$ to obtain
\begin{equation}\label{e:finalclaim}
\E \big[\log Z^{(N)}\big] \ge \E \big[\log Z^{(0)}\big].
\end{equation}
By the definition of $Z^{(r)}$, equation \eqref{e:finalclaim} is exactly the claim \eqref{e:main}.
\end{proof}

\section{Self-Averaging}\label{s:concentration}
\begin{proof}[Proof of \Cref{t:concentration}]
This follows from the following Proposition together with Markov's inequality.
\end{proof}
\begin{proposition}\label{p:martingale}
We have
\[
 \E \bigg[
\Big| \log Z_N
- \E\big[ \log Z_N  \big] \Big|^2 \bigg] \le N^{3 - \alpha +\delta}.
\]
\end{proposition}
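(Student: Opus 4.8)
\textbf{Proof proposal for Proposition~\ref{p:martingale}.} The plan is to use the Azuma--Hoeffding-type martingale decomposition along the couplings, but since the couplings have infinite variance we cannot bound the martingale increments in $L^\infty$; instead I would bound them in $L^2$ and sum. Order the pairs $(i,j)$ with $1 \le i < j \le N$ in some arbitrary but fixed way, write $\mathcal F_k$ for the $\sigma$-algebra generated by the first $k$ of the $J_{ij}$, and set $D_k = \E[\log Z_N \mid \mathcal F_k] - \E[\log Z_N \mid \mathcal F_{k-1}]$, so that $\log Z_N - \E[\log Z_N] = \sum_k D_k$ is a sum of martingale differences and $\E[|\log Z_N - \E \log Z_N|^2] = \sum_k \E[D_k^2]$ by orthogonality.

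Next I would estimate each $\E[D_k^2]$. Fix the pair $(i,j)$ associated to index $k$. Let $Z_N$ and $Z_N'$ denote the partition functions built from $\{J_{ab}\}$ and from the same couplings with $J_{ij}$ replaced by an independent copy $J_{ij}'$. A standard argument (as in Chatterjee's proof) gives $D_k = \E[\log Z_N - \log Z_N' \mid \mathcal F_k']$ for the appropriate enlarged filtration, and from the definition \eqref{e:hamiltonian} one has the deterministic bound $|\log Z_N - \log Z_N'| \le N^{-1/\alpha}(|J_{ij}| + |J_{ij}'|)$, since changing a single coupling changes the Hamiltonian by at most $N^{-1/\alpha}(|J_{ij}| + |J_{ij}'|)\cdot |\sigma_i\sigma_j|$ uniformly in $\sigma$. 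Hence $\E[D_k^2] \le C N^{-2/\alpha}\,\E[J^2\one_{|J| \le ?}]$ --- and here is the subtlety: $\E[J^2]=\infty$, so this crude bound is useless as stated. The fix is that the increment is genuinely controlled by a \emph{truncated} second moment: since $|\log Z_N - \log Z_N'|\le N\log 2 + N^{-1/\alpha}\sum_{ab}|J_{ab}|$ always, one also has the bound $|D_k|\le C N$ deterministically; combining, $\E[D_k^2] \le \E\big[\min\{CN, N^{-1/\alpha}(|J_{ij}|+|J_{ij}'|)\}^2\big] \le C N^{-2/\alpha}\E[J^2\one_{|J|\le N^{1+1/\alpha}}] + C N^2 \P(|J| > N^{1+1/\alpha})$. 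By \eqref{e:arep}, $\E[J^2\one_{|J|\le T}] \le C(1 + T^{2-\alpha})$ and $\P(|J|>T)=C_0 T^{-\alpha}$, so with $T = N^{1+1/\alpha}$ this is $\le C N^{-2/\alpha}\cdot N^{(1+1/\alpha)(2-\alpha)} + C N^2 \cdot N^{-\alpha(1+1/\alpha)} = C N^{2/\alpha - \alpha} + C N^{1-\alpha} \le C N^{2/\alpha - \alpha}$ for $\alpha < 2$ (one checks $2/\alpha - \alpha \ge 1-\alpha$ iff $2/\alpha \ge 1$, true). Actually a slightly sharper truncation level is available, but any polynomial choice works after absorbing into $\delta$; the point is to get $\E[D_k^2] \le C N^{\theta}$ with $\theta < 2 - \alpha + \delta$. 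Summing over the $\binom{N}{2} \le N^2$ indices $k$ gives $\E[|\log Z_N - \E\log Z_N|^2] \le C N^{2+\theta}$, and choosing the truncation to make $2 + \theta \le 3 - \alpha + \delta$ yields the claim.

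The main obstacle, and the only place where heavy tails really bite, is precisely this truncated-second-moment bookkeeping: one must interpolate between the $L^2$-type increment bound $N^{-1/\alpha}|J_{ij}|$ (which has no second moment) and the trivial deterministic bound $|D_k| \le CN$, at a truncation threshold tuned so that the two contributions balance below $N^{2-\alpha+\delta}$ per increment. Everything else --- the martingale orthogonality, the one-coupling-swap Lipschitz estimate, the final sum over $\binom{N}{2}$ terms --- is routine. I would also remark that the $\delta$ in the exponent is an artifact of being slightly wasteful in the truncation and could in principle be removed with a more careful choice of threshold (e.g.\ $T$ depending on $\alpha$), but keeping $\delta$ streamlines the computation and suffices for Theorem~\ref{t:concentration}.
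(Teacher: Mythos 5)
Your skeleton --- ordering the couplings, forming the Doob martingale $D_k=\E[\log Z_N\mid\mathcal F_k]-\E[\log Z_N\mid\mathcal F_{k-1}]$, and the one-coupling Lipschitz bound $|\log Z_N-\log Z_N'|\le N^{-1/\alpha}(|J_{ij}|+|J_{ij}'|)$ --- is the same as the paper's. The proof breaks at the claim that ``$|D_k|\le CN$ deterministically.'' The inequality you invoke, $|\log Z_N-\log Z_N'|\le N\log 2+N^{-1/\alpha}\sum_{ab}|J_{ab}|$, has an unbounded \emph{random} right-hand side and yields no $O(N)$ bound; and no such bound exists. Writing $Z_N=Z_N^{(k)}\langle e^{N^{-1/\alpha}J_k\sigma_i\sigma_j}\rangle_k$ as in the paper, one checks that for large $|J_k|$ the increment equals $N^{-1/\alpha}|J_k|$ up to terms not involving $J_k$, so $D_k$ inherits the $\alpha$-tail of $J$ and $\E[D_k^2]=\infty$. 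Hence the orthogonality identity $\E[(\sum_k D_k)^2]=\sum_k\E[D_k^2]$ reads $\infty=\infty$, and the bound $\E[D_k^2]\le\E\big[\min\{CN,\,N^{-1/\alpha}(|J_{ij}|+|J_{ij}'|)\}^2\big]$ has no justification: there is no bounded competitor to take the minimum with. Indeed no truncation can rescue a second-moment argument here, because the left-hand side of the Proposition is itself infinite as literally stated (taking $\sigma_1\sigma_2=\sgn(J_{12})$ and averaging the remaining spins gives $\log Z_N\ge N^{-1/\alpha}|J_{12}|$, so $\E[(\log Z_N)^2]\ge N^{-2/\alpha}\E[J^2]=\infty$); what the martingale argument genuinely controls is a $p$-th moment with $p<\alpha$.

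The paper's resolution is exactly this replacement of exponent $2$ by $p\in(1,\alpha)$: it keeps only the bound $|D_x|\le N^{-1-1/\alpha}(|J_x|+\E[|J_x|])$ (with the extra $N^{-1}$ from normalizing by the system size) and applies Burkholder's inequality together with the subadditivity $(\sum_x D_x^2)^{p/2}\le\sum_x|D_x|^p$, valid since $p/2<1$. This requires only $\E[|J|^p]<\infty$, and gives the exponent $2-p-p/\alpha$, which tends to $1-\alpha$ as $p\uparrow\alpha$; choosing $p=p(\delta)$ close to $\alpha$ produces the stated $N^{1-\alpha+\delta}$ (equivalently $N^{3-\alpha+\delta}$ before normalization). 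So the missing idea is to move to $p$-th moments at the level of the square function rather than to truncate. A minor, and in the end moot, point: even granting your truncation, the arithmetic has a slip --- with $T=N^{1+1/\alpha}$ both terms equal $N^{1-\alpha}$, not $N^{2/\alpha-\alpha}$.
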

\begin{proof}
Let $\mathcal I = \big\{(i,j) : 1\le i < j \le N \big\}$ and fix an arbitrary bijection $f\colon \mathcal I \rightarrow \big\{1,2,\dots, | \mathcal I |\big\}$. We use $J_{x}$ with $x \in \big\{1,2,\dots, | \mathcal I |\big\}$ as shorthand for $J_{f^{-1}(x)}$. 
Set $\mathcal F_x = \sigma (J_y : y \le x )$ for all $x \in \big\{1,2,\dots, | \mathcal I |\big\}$, where this notation denotes the $\sigma$-algebra generated by the given couplings $J_y$. 
Consider the martingale
\begin{equation}
A_x = \frac{1}{N} \E\big[ \log Z_N \;\big|\; \mathcal F_x\big]
- \frac{1}{N} \E\big[ \log Z_N \big],
\end{equation}
with the convention that $A_0 = 0$. 

Define the martingale difference sequence $D_x = A_x - A_{x-1}$ for $x \ge 1$, so that 
$A_x = \sum_{y\le x} D_y.$
%We now estimate $\E \big[ D_x^2 \big]$. 
Set 
\[H^{(x)}(\sigma) =  \frac{1}{N^{1/\alpha}} \sum_{i < j } J_{ij} \sigma_i \sigma_j \one_{f(i,j) \neq x},\qquad Z^{(x)}_N = \sum_{\sigma\in \Sigma_N} e^{  H^{(x)}(\sigma)},
\]
where $H^{(x)}(\sigma)$ is similar to the Hamiltonian $H(\sigma)$, except with the coupling $J_x$ set equal to zero. Let $\langle \cdot \rangle_x$ denote the Gibbs measure with respect to $H^{(x)}$. Then we have (by definition)
$Z_N = Z_N^{(x)} \langle e^{ N^{-1/\alpha}J_x \sigma_x} \rangle_x.$
We write
\[
N \cdot D_x = \E\big[\log \langle e^{N^{-1/\alpha} J_x \sigma_x} \rangle_x \;\big |\; \mathcal F_x \big]
-  \E\big[\log \langle e^{N^{-1/\alpha} J_x \sigma_x} \rangle_x \;\big |\; \mathcal F_{x-1} \big],
\]
where we use the fact that $\E \big[ Z^{(x)}\;\big|\; \mathcal F_x\big] = \E \big[ Z^{(x)}\;\big|\; \mathcal F_{x-1}\big]$. % is the same conditioned on either $\mathcal F_x$ for $\mathcal F_{x-1}$. 
Bounding $e^{N^{-1/\alpha} J_x \sigma_x}$ in absolute value in each expectation gives
\[
|D_x| \le N^{-1-1/\alpha} \Big( 
|J_x| + \E \big[ | J_x| \big] \Big),
\]
which implies 
$|D_x|^p \le 2^p N^{-1-1/\alpha} ( 
|J_x|^p + \E [ | J_x| ]^p )$
for any $p \in (1,2)$.
By Burkholder's inequality with exponent $p \in( 1,2)$, and the fact that $p/2 <1$, we have 
\[
\E[ 
|A_{|\mathcal I |} - A_0|^2
] \le C_p \E [ 
(\textstyle\sum_{x} D_x^2 )^{p/2}  
]
\le
C_p \E \big[ 
\textstyle\sum_{x} D_x^p 
\big]
\le C_p N^{2 -p -p/\alpha} \E \big[ | J_x| \big]^p.
\]
Set $g(p) = 2 - p\big(1 + \frac{1}{\alpha}\big)$. Observe that $g$ is continuous on $[1,2]$, and $g(\alpha) <  1 - \alpha < 0$. By choosing $p(\delta)$ sufficiently close to $\alpha$, we find 
\[
\E\big[ 
|A_{|\mathcal I |} - A_0|^2
\big] \le C N^{ 1 - \alpha + \delta},
\]
where $C=C(\delta) > 1$ depends on $\delta$. This completes the proof.
\end{proof}

{\small \bibliography{levyglass} }
	
\bibliographystyle{abbrv}
\end{document}